\documentclass[10pt]{amsart}

\numberwithin{equation}{section}
\newcounter{mnote}
\let\oldmarginpar\marginpar
\renewcommand\marginpar[1]{\-\oldmarginpar[\raggedleft\footnotesize #1]%
	{\raggedright\footnotesize #1}}

\usepackage[english]{babel}
\usepackage{amssymb}
\usepackage{mathrsfs}
\usepackage{stmaryrd}
\usepackage{chemarrow}
\usepackage[norelsize]{algorithm2e}
\usepackage{enumerate}
\usepackage{graphicx}
\usepackage{subcaption}
\usepackage[all]{xy}
\usepackage{tikz}
\usetikzlibrary{arrows}
\usepackage{multirow}
\usepackage[colorinlistoftodos]{todonotes}
\usepackage[colorlinks=true, allcolors=blue]{hyperref}
\usepackage[hyperpageref]{backref} % 参考文献中backref
\usepackage{hyperref}
\hypersetup{
	colorlinks=true,
	linkcolor=blue,
	anchorcolor=blue,
	citecolor=red}
\usepackage{verbatim}
\usepackage{booktabs}
\allowdisplaybreaks

\newtheorem{theorem}{Theorem}[section]
\newtheorem{lemma}[theorem]{Lemma}

\newtheorem{example}[theorem]{Example}

\newtheorem{remark}[theorem]{Remark}
\newtheorem{assumption}[theorem]{Assumption}

\newcommand{\normmm}[1]{{\left\vert\kern-0.25ex\left\vert\kern-0.25ex\left\vert #1
		\right\vert\kern-0.25ex\right\vert\kern-0.25ex\right\vert}}

\renewcommand{\div}{\operatorname{div}}
\newcommand{\grad}{\operatorname{grad}}
\newcommand{\tr}{\operatorname{tr}}

\newcommand{\dev}{\operatorname{dev}}

\begin{document}

% \title[]{FEM FOR SGE MODEL}

% \author{XUEHAI HUANG \quad ZHEQIAN TANG}
%\address{}

\title[Lowest-Order Robust Mixed FEM for SGE]{A Lowest-Order Robust Mixed Finite Element Method with a Third-Order Tensor Variable for Strain Gradient Elasticity}
\author{Xuehai Huang}%
\address{School of Mathematics, Shanghai University of Finance and Economics, Shanghai 200433, China}%
\email{huang.xuehai@sufe.edu.cn}%
\author{Zheqian Tang}%
\address{School of Mathematics, Shanghai University of Finance and Economics, Shanghai 200433, China}%
\email{tangzq0329@163.com}%

\thanks{%The second author is the corresponding author. 
%The first author was partially supported by NSFC (Grant No.\ 12071289) and the National Key Research and Development Project (2020YFA0709800).
%The second author was supported by the National Natural Science Foundation of China (Grant No.\ 12171300). %, and the Natural Science Foundation of Shanghai  (Grant No.\ 21ZR1480500).
}
\keywords{Strain gradient elasticity;
	mixed finite element methods;
	third-order tensor finite elements;
	robust error analysis.}
%%%%% Keywords %%%%%%%%%%%
%\keywords{ }
\makeatletter
\@namedef{subjclassname@2020}{\textup{2020} Mathematics Subject Classification}
\makeatother
\subjclass[2020]{
%65N55;   %%  Multigrid methods; domain decomposition for boundary value problems involving PDEs;
%65F10;   %% Iterative numerical methods for linear systems
%58J10;   %%  Differential complexes [See also 35Nxx]; elliptic complexes
65N30;   %%  Finite element, Rayleigh--Ritz and Galerkin methods for boundary value problems involving PDEs;
65N12;   %%  Stability and convergence of numerical methods for boundary value problems involving PDEs;
65N22;   %%  Numerical solution of discretized equations for boundary value problems involving PDEs;
% 65N15;   %%  Error bounds for boundary value problems involving PDEs
15A69;   %%  Multilinear algebra, tensor calculus
% 15A72;   %%  Vector and tensor algebra, theory of invariants [See also 13A50, 14L24]
}

\begin{abstract}
%An optimal and robust lowest-order mixed finite element method is developed for the strain gradient elasticity (SGE) model in arbitrary dimensions, achieving optimal convergence without the need for Nitsche’s technique. The method employs lowest-order $H(\div)$-conforming elements for third-order tensors to discretize the stress gradient $\grad\boldsymbol{\sigma}(\boldsymbol{u})$ and Crouzeix--Raviart elements for the displacement $\boldsymbol{u}$. Optimal error estimates, uniform with respect to both the size parameter $\iota$ and the Lam\'{e} coefficient $\lambda$, are also established  based on an $L^2$-bounded interpolation operator for third-order tensors. Finally, numerical experiments in both two and three dimensions are presented to validate the theoretical results.
A lowest-order mixed finite element method is developed for the strain gradient elasticity (SGE) model in arbitrary dimensions. We take the physically meaningful third-order double stress tensor $\boldsymbol{\Phi}:=\iota^2\grad\boldsymbol{\sigma}(\boldsymbol{u})\in\mathbb{S}\otimes\mathbb{R}^d$ as a primary variable and derive a distributional mixed formulation. The double stress is approximated by an $\mathbb{S}\otimes\mathbb{R}^d$-valued extension of the lowest-order Raviart--Thomas element, while the displacement is approximated by the vector-valued linear Crouzeix--Raviart element. Thus, the method avoids both high-degree bubble enrichment and a Nitsche-type treatment of the higher-order boundary condition. We establish parameter-robust discrete stability, an optimal first-order error estimate for fixed parameters, and a complementary parameter-uniform $\mathcal{O}(\iota^{1/2}+h)$ error estimate with constants independent of both the size parameter $\iota$ and the Lam\'e coefficient $\lambda$. In the boundary-layer regime $\iota^{1/2}\lesssim h$, the latter retains a first-order convergence rate in $h$. We also develop a local quadratic post-processing and a hybridized formulation. Numerical experiments in two and three dimensions support the theoretical results.
\end{abstract}
\maketitle

%%%%%%%%%%%%%%%%%%%%%%%%%%%%%%%%%%%%%%%%
%% sect-introduction
%%%%%%%%%%%%%%%%%%%%%%%%%%%%%%%%%%%%%%%%
\section{Introduction}\label{sec1}
Classical elasticity theory may fail to describe the mechanical behavior of materials at small scales, where microstructural interactions and size effects become significant
\cite{toupin1962elastic,mindlin1964micro,aifantis1984microstructural}.
Strain gradient elasticity (SGE) theories account for these effects by incorporating higher-order deformation gradients and additional material parameters. The Toupin--Mindlin theory
\cite{toupin1962elastic,mindlin1964micro,mindlin1968first}
provides a general framework involving multiple intrinsic length scales. A simplified SGE model with only one additional size parameter was subsequently proposed in
\cite{altan1992structure,ru1993simple}
and has been widely used in engineering applications
\cite{askes2011gradient,polizzotto2015unifying}.

In this paper, we consider the SGE model given in \cite{altan1992structure,ru1993simple}:
\begin{equation}\label{SGE0}
\begin{cases}
-\div ((\boldsymbol{I}-\iota^{2}\Delta)\boldsymbol{\sigma}(\boldsymbol{u}))=\boldsymbol{f} &\mbox{in} \ \Omega,\\
\boldsymbol{u}=\partial_{n}\boldsymbol{u}=0 &\mbox{on} \ \partial\Omega,
\end{cases}
\end{equation}
where $\Omega\subset\mathbb{R}^d (d\geq 2)$ is a bounded polytopal domain, $\boldsymbol{f}\in L^2(\Omega;\mathbb R^d)$ is the applied force, $\boldsymbol{u}=(u_1, \ldots, u_d)^\intercal$ is the displacement field, $\partial_{n}\boldsymbol{u}$ is the normal derivative of $\boldsymbol{u}$, the stress $\boldsymbol{\sigma}(\boldsymbol{u})=2\mu\boldsymbol{\varepsilon}(\boldsymbol{u})+\lambda(\div\boldsymbol{u})\boldsymbol{I}$, and $\boldsymbol{\varepsilon}(\boldsymbol{u})=(\varepsilon_{ij}(\boldsymbol{u}))_{d\times d}$ is the strain tensor field with $\varepsilon_{ij}(\boldsymbol{u})=(\partial_j u_i+\partial_i u_j)/2$. Here $\mu$ and $\lambda$ are the Lam\'{e} constants, $\boldsymbol{I}$ is the identity tensor field, and $\iota\in(0,1]$ denotes the size parameter of the material under discussion.

The SGE model \eqref{SGE0} is a fourth-order strain gradient perturbation of linear elasticity. As $\iota$ approaches zero, it reduces to the second-order linear elasticity problem, whose solution generally fails to satisfy the additional boundary condition $\partial_n\boldsymbol{u}=0$; boundary layers consequently arise. Furthermore, as $\lambda$ tends to infinity, the material becomes nearly incompressible, which may cause volumetric locking for low-order conforming finite element methods. The fourth-order structure and clamped boundary conditions, together with these two parameter limits, make the construction of genuinely low-order methods that are stable and parameter robust particularly challenging.

Several conforming, nonconforming, and mixed finite element methods have been developed for strain gradient elasticity; see, for instance,
\cite{zervos2001modelling,torabi2018c1,MR3712289,MR4296093,MR4021023,chirkov2024mixed-1,chirkov2025mixed}.
However, many existing methods either do not provide a complete error analysis or do not yield estimates uniform with respect to $\lambda$.
Robust finite element analysis with respect to both $\iota$ and $\lambda$ was initiated in
\cite{ChenHuangHuang2023,MR4549866},
where sharp but suboptimal convergence rates of
$\mathcal{O}(h^{1/2})$ were obtained.
In the boundary-layer regime $\iota^{1/2}\lesssim h$, the parameter-uniform estimate proved below retains a first-order convergence rate in $h$, thereby overcoming the loss of convergence order observed in previous robust analyses.

To achieve optimal convergence in the presence of boundary layers, the recently developed optimal-order methods in \cite{ChenHuangHuang2025,HuangHuangTang2025} employ Nitsche's technique \cite{MR0341903,MR2917211} to handle the higher-order boundary condition.
More precisely, a quartic nonconforming finite element method in two dimensions was developed in \cite{ChenHuangHuang2025}, while a quadratic nonconforming finite element method in arbitrary dimensions was proposed in \cite{HuangHuangTang2025}. Both methods are based on nonconforming finite element discretizations of the short complex
\begin{equation}\label{complexdivpartH2}
	H_0^2(\Omega;\mathbb{R}^d)
	\xrightarrow{\div}
	H_0^1(\Omega)\cap L_0^2(\Omega)
	\xrightarrow{}0.
\end{equation}
However, constructing a nonconforming finite element discretization of \eqref{complexdivpartH2} is technically demanding. The local spaces in these methods require enrichment by high-degree bubble functions, namely, polynomials of degree $9$ in \cite{ChenHuangHuang2025} and of degree $3d+1$ in \cite{HuangHuangTang2025}. These constructions underline the difficulty of obtaining genuinely low-order SGE methods with both optimal convergence and parameter robustness.

The purpose of this paper is to develop a genuinely lowest-order mixed finite element method for problem~\eqref{SGE0} in arbitrary dimensions. A central feature of our approach is to take the physically meaningful third-order double stress tensor
\[
\boldsymbol{\Phi}
:=\iota^2\grad\boldsymbol{\sigma}(\boldsymbol{u})
\]
as a primary mixed variable. The double stress is associated with the strain-gradient effects in the model; it is an $\mathbb{S}\otimes\mathbb{R}^d$-valued third-order tensor that is symmetric in its first two indices. Thus, the novelty here is not the physical notion of double stress itself, but its use as a primary variable in a distributional mixed formulation. This gives a route fundamentally different from the Nitsche-based approaches above: rather than imposing the higher-order boundary condition through Nitsche boundary terms, we incorporate it into the mixed variational structure. Specifically, we seek $(\boldsymbol{\Phi},\boldsymbol{u})
\in
H^{-1}(\div\div,\Omega;
\mathbb{S}\otimes\mathbb{R}^d)
\times H_0^1(\Omega;\mathbb{R}^d)$
such that
\begin{subequations}\label{intro-mix}
	\begin{align}
			\label{intro-mix1}
			\iota^{-2}(\widetilde{\mathcal{A}}\boldsymbol{\Phi},\boldsymbol{\Psi})-\langle\div\div\boldsymbol{\Psi}, \boldsymbol{u}\rangle &= 0 \quad\quad\quad\;\; \forall \ \boldsymbol{\Psi}\in  H^{-1}(\div\div,\Omega; \mathbb{S}\otimes\mathbb{R}^d), \\
			\label{intro-mix2}
			-\langle\div\div\boldsymbol{\Phi}, \boldsymbol{v}\rangle-(\boldsymbol{\sigma}(\boldsymbol{u}),\boldsymbol{\varepsilon}(\boldsymbol{v}))&= -(\boldsymbol{f},\boldsymbol{v}) \quad \forall \ \boldsymbol{v}\in H^1_0(\Omega;\mathbb{R}^d).
		\end{align}
\end{subequations}
Here $\widetilde{\mathcal{A}}$ is defined in \eqref{eq:mathcalA}. The boundary condition $\partial_n\boldsymbol{u}=0$ is weakly imposed in \eqref{intro-mix}.

The distributional viewpoint underlying the mixed formulation is inspired by the framework developed in \cite{HuangTang2025} for a scalar fourth-order elliptic singular perturbation problem. In the present elastic setting, the symmetric-stress structure, the physically meaningful $\mathbb{S}\otimes\mathbb{R}^d$-valued third-order double stress, and the requirement of estimates uniform in both $\iota$ and $\lambda$ necessitate a distinct finite element construction and analysis. Since
\[
H(\div,\Omega;\mathbb{S}\otimes\mathbb{R}^d)
\subset
H^{-1}(\div\div,\Omega;
\mathbb{S}\otimes\mathbb{R}^d),
\]
the distributional formulation permits an $H(\div)$-conforming approximation of the third-order double stress. Moreover, it reduces the regularity requirement on the displacement from $H_0^2(\Omega;\mathbb{R}^d)$ to $H_0^1(\Omega;\mathbb{R}^d)$, so that it suffices to discretize the short complex
\begin{equation}\label{complexdivpartH1}
	H_0^1(\Omega;\mathbb{R}^d)
	\xrightarrow{\div}
	L_0^2(\Omega)
	\xrightarrow{}0.
\end{equation}
More precisely, we approximate the double stress tensor $\boldsymbol{\Phi}$ by an $\mathbb{S}\otimes\mathbb{R}^d$-valued extension of the lowest-order Raviart--Thomas element \cite{RaviartThomas1977,Nedelec1980}, whose local shape function space is
\[
\Sigma(T;\mathbb{S}\otimes\mathbb{R}^d)
=
\mathbb{P}_0(T;\mathbb{S}\otimes\mathbb{R}^d)
+
\mathbb{P}_0(T;\mathbb{S})\otimes\boldsymbol{x},
\]
and approximate $\boldsymbol{u}$ by the vector-valued linear Crouzeix--Raviart element \cite{CR1973,BrennerSung1992}. Hence, the double stress and displacement are represented by lowest-order and linear finite element spaces, respectively, without the high-degree bubble enrichments used in the above constructions or a Nitsche-type treatment of the higher-order boundary condition.

The main contributions are threefold. First, we formulate the SGE model with the physically meaningful $\mathbb{S}\otimes\mathbb{R}^d$-valued third-order double stress as a primary mixed variable, thereby obtaining a distributional formulation that weakly incorporates the higher-order boundary condition. Second, this formulation enables a genuinely lowest-order RT--CR discretization in arbitrary dimensions without either high-degree bubble enrichment or a Nitsche-type boundary treatment. Third, we establish parameter-robust discrete stability and derive two complementary a priori error estimates: an optimal first-order estimate for fixed parameters and sufficiently smooth solutions, and, under the uniform regularity assumption stated below, a parameter-uniform $\mathcal{O}(\iota^{1/2}+h)$ estimate with constants independent of both $\iota$ and $\lambda$. In particular, the latter remains first order in $h$ in the boundary-layer regime $\iota^{1/2}\lesssim h$. We further construct a local quadratic post-processing for the displacement and develop a hybridized formulation. Numerical experiments in two and three dimensions support the theoretical findings.

The rest of this paper is organized as follows. Section~\ref{sec2} introduces some notation, derives the distributional mixed formulation, and recalls the uniform regularity results for the SGE model. In Section~\ref{sec3}, we develop and analyze the proposed mixed finite element method, including a local post-processing. Section~\ref{sec4} presents a hybridized formulation and its analysis. Numerical experiments in two and three dimensions are presented in Section~\ref{sec5}.

\section{Distributional mixed formulation of strain gradient elasticity}\label{sec2}
In this section, we introduce the third-order double stress $\boldsymbol{\Phi}:=\iota^2\grad\boldsymbol{\sigma}(\boldsymbol{u})$ and derive a distributional mixed formulation of the SGE model~\eqref{SGE0}.

\subsection{Tensor notation and differential operators}
Let $d\geq 2$, and let $\mathbb{S}$ denote the space of symmetric $d \times d$ matrices. The tensor product $\mathbb{S}\otimes\mathbb{R}^d$ is identified with the space of third-order tensors that are symmetric in the first two indices. For a second-order tensor $\boldsymbol{\tau}$ and a vector $\boldsymbol{v}$, we use the convention
\[
(\boldsymbol{\tau}\otimes\boldsymbol{v})_{ijk}:=\tau_{ij}v_k.
\]
For a third-order tensor $\boldsymbol{\Psi}=(\Psi_{ijk})$ and a vector $\boldsymbol{v}=(v_k)$, the tensor-vector contraction is defined by
\[
(\boldsymbol{\Psi}\boldsymbol{v})_{ij}:=(\boldsymbol{\Psi}\cdot\boldsymbol{v})_{ij}
:=\sum_{k=1}^d\Psi_{ijk}v_k.
\]
In particular, for a unit vector $\boldsymbol{n}$,
\[
(\boldsymbol{\Psi}\boldsymbol{n})_{ij}:=\sum_{k=1}^d\Psi_{ijk}n_k.
\]
All tensor inner products are Frobenius inner products.

For a second-order tensor $\boldsymbol{\tau}=(\tau_{ij})$, define
\[
\tr(\boldsymbol{\tau}):=\sum_{i=1}^d\tau_{ii},
\qquad
\dev\boldsymbol{\tau}:=\boldsymbol{\tau}-\frac{1}{d}\tr(\boldsymbol{\tau})\boldsymbol{I},
\]
where $\boldsymbol{I}$ is the $d\times d$ identity matrix. For a third-order tensor $\boldsymbol{\Psi}$, the trace is taken over the first two indices:
\[
\tr(\boldsymbol{\Psi})
:=\left(\sum_{i=1}^d\Psi_{ii1},\ldots,\sum_{i=1}^d\Psi_{iid}\right)^\intercal,
\qquad
\dev\boldsymbol{\Psi}
:=\boldsymbol{\Psi}-\frac{1}{d}\boldsymbol{I}\otimes\tr(\boldsymbol{\Psi}).
\]

Let $\mu>0$ and $\lambda>0$ be the Lam\'e constants. Define the fourth-order compliance tensor $\mathcal{A}:\mathbb{S}\to\mathbb{S}$ by
\[
\mathcal{A}\boldsymbol{\tau}
:=\frac{1}{2\mu}\left(\boldsymbol{\tau}-\frac{\lambda}{2\mu+d\lambda}\tr(\boldsymbol{\tau})\boldsymbol{I}\right)
=\frac{1}{2\mu}\dev\boldsymbol{\tau}
+\frac{1}{d(2\mu+d\lambda)}\tr(\boldsymbol{\tau})\boldsymbol{I},
\]
and its inverse $\mathcal{C}:\mathbb{S}\to\mathbb{S}$ by
\[
\mathcal{C}\boldsymbol{\tau}:=2\mu\boldsymbol{\tau}+\lambda\tr(\boldsymbol{\tau})\boldsymbol{I}.
\]
For $1\leq k\leq d$, let $\boldsymbol{\Psi}_{::k}:=(\Psi_{ijk})_{i,j=1}^d$. Define the extended compliance tensor $\widetilde{\mathcal{A}}=\mathcal{A}\otimes I_{\mathbb{R}^d}$ slice-wise by
\[
(\widetilde{\mathcal{A}}\boldsymbol{\Psi})_{::k}
:=\mathcal{A}(\boldsymbol{\Psi}_{::k}),
\qquad 1\leq k\leq d,
\]
where $I_{\mathbb R^d}:\mathbb R^d\to\mathbb R^d$ is the identity operator.
Equivalently,
\begin{equation}\label{eq:mathcalA}
\widetilde{\mathcal{A}}\boldsymbol{\Psi}
:=\frac{1}{2\mu}\left(\boldsymbol{\Psi}-\frac{\lambda}{2\mu+d\lambda}\boldsymbol{I}\otimes\tr(\boldsymbol{\Psi})\right)
=\frac{1}{2\mu}\dev\boldsymbol{\Psi}
+\frac{1}{d(2\mu+d\lambda)}\boldsymbol{I}\otimes\tr(\boldsymbol{\Psi}).
\end{equation}
Similarly, define $\widetilde{\mathcal{C}}=\mathcal{C}\otimes I_{\mathbb{R}^d}$ slice-wise by
\[
(\widetilde{\mathcal{C}}\boldsymbol{\Psi})_{::k}
:=\mathcal{C}(\boldsymbol{\Psi}_{::k}),
\qquad 1\leq k\leq d.
\]
Then $\widetilde{\mathcal{C}}$ is the inverse of $\widetilde{\mathcal{A}}$, and
\[
\widetilde{\mathcal{C}}\boldsymbol{\Psi}=2\mu\boldsymbol{\Psi}+\lambda\boldsymbol{I}\otimes\tr(\boldsymbol{\Psi}).
\]

\begin{lemma}
For a third-order tensor $\boldsymbol{\Psi}\in\mathbb S\otimes\mathbb R^d$, a second-order tensor $\boldsymbol{\tau}\in\mathbb S$ and a vector $\boldsymbol{v}\in\mathbb R^d$, we have
\begin{align}
\label{eq:trtensor3prop}
\tr(\boldsymbol{\Psi})\cdot\boldsymbol{v}&=\tr(\boldsymbol{\Psi}\boldsymbol{v}), \\
\label{eq:trtensorprop}
\tr(\boldsymbol{\tau}\otimes\boldsymbol{v})&=(\tr(\boldsymbol{\tau}))\boldsymbol{v}, \\
\label{eq:Atensorprop}
\widetilde{\mathcal{A}}(\boldsymbol{\tau}\otimes\boldsymbol{v})&=(\mathcal{A}\boldsymbol{\tau})\otimes\boldsymbol{v}, \\
\label{eq:Atildetensorprop}
(\widetilde{\mathcal{A}}\boldsymbol{\Psi})\boldsymbol{v}&=\mathcal{A}(\boldsymbol{\Psi}\boldsymbol{v}), \\
\label{eq:Ctensorprop}
\widetilde{\mathcal{C}}(\boldsymbol{\tau}\otimes\boldsymbol{v})&=(\mathcal{C}\boldsymbol{\tau})\otimes\boldsymbol{v}, \\
\label{eq:Ctildetensorprop}
(\widetilde{\mathcal{C}}\boldsymbol{\Psi})\boldsymbol{v}&=\mathcal{C}(\boldsymbol{\Psi}\boldsymbol{v}).
\end{align}
\end{lemma}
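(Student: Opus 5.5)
All six identities are pointwise algebraic statements, and I will verify each by writing it out in index notation from the definitions of $\tr$, $\widetilde{\mathcal{A}}$ and $\widetilde{\mathcal{C}}$. The natural order is to prove the two trace identities \eqref{eq:trtensor3prop} and \eqref{eq:trtensorprop} first, since the statements about $\widetilde{\mathcal{A}}$ and $\widetilde{\mathcal{C}}$ reduce to them through the closed-form expressions \eqref{eq:mathcalA} and $\widetilde{\mathcal{C}}\boldsymbol{\Psi}=2\mu\boldsymbol{\Psi}+\lambda\boldsymbol{I}\otimes\tr(\boldsymbol{\Psi})$.

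For \eqref{eq:trtensor3prop}, I compute
\[
\tr(\boldsymbol{\Psi})\cdot\boldsymbol{v}=\sum_k\Big(\sum_i\Psi_{iik}\Big)v_k=\sum_i\sum_k\Psi_{iik}v_k=\sum_i(\boldsymbol{\Psi}\boldsymbol{v})_{ii}=\tr(\boldsymbol{\Psi}\boldsymbol{v}).
\]
For \eqref{eq:trtensorprop}, the $k$-th component of $\tr(\boldsymbol{\tau}\otimes\boldsymbol{v})$ is $\sum_i\tau_{ii}v_k=(\tr\boldsymbol{\tau})v_k$. Note that $\boldsymbol{\tau}\otimes\boldsymbol{v}\in\mathbb{S}\otimes\mathbb{R}^d$ because $\boldsymbol{\tau}$ is symmetric.

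For \eqref{eq:Atensorprop} I see two routes. The quickest uses the slice-wise definition: $(\boldsymbol{\tau}\otimes\boldsymbol{v})_{::k}=v_k\boldsymbol{\tau}$, so linearity of $\mathcal{A}$ gives $(\widetilde{\mathcal{A}}(\boldsymbol{\tau}\otimes\boldsymbol{v}))_{::k}=v_k\mathcal{A}\boldsymbol{\tau}=((\mathcal{A}\boldsymbol{\tau})\otimes\boldsymbol{v})_{::k}$. Alternatively, I can insert \eqref{eq:trtensorprop} into \eqref{eq:mathcalA}, which gives $\boldsymbol{I}\otimes\tr(\boldsymbol{\tau}\otimes\boldsymbol{v})=\tr(\boldsymbol{\tau})\boldsymbol{I}\otimes\boldsymbol{v}$ and hence $\frac{1}{2\mu}\big(\boldsymbol{\tau}-\frac{\lambda}{2\mu+d\lambda}\tr(\boldsymbol{\tau})\boldsymbol{I}\big)\otimes\boldsymbol{v}$.

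For \eqref{eq:Atildetensorprop}, I will use $\boldsymbol{\Psi}\boldsymbol{v}=\sum_k v_k\boldsymbol{\Psi}_{::k}$, which is symmetric. Then
\[
(\widetilde{\mathcal{A}}\boldsymbol{\Psi})\boldsymbol{v}=\sum_k v_k\mathcal{A}(\boldsymbol{\Psi}_{::k})=\mathcal{A}\Big(\sum_k v_k\boldsymbol{\Psi}_{::k}\Big)=\mathcal{A}(\boldsymbol{\Psi}\boldsymbol{v}).
\]
As a consistency check through \eqref{eq:mathcalA}, this needs $(\boldsymbol{I}\otimes\tr\boldsymbol{\Psi})\boldsymbol{v}=(\tr(\boldsymbol{\Psi})\cdot\boldsymbol{v})\boldsymbol{I}=\tr(\boldsymbol{\Psi}\boldsymbol{v})\boldsymbol{I}$, which is exactly \eqref{eq:trtensor3prop}.

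The identities \eqref{eq:Ctensorprop} and \eqref{eq:Ctildetensorprop} follow by the same arguments, since $\widetilde{\mathcal{C}}$ is defined slice-wise from the linear map $\mathcal{C}$.

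None of the steps is a genuine obstacle. The only point needing care is bookkeeping: the contraction and the $\otimes\boldsymbol{v}$ both act on the third index, which is exactly the index the slice-wise operators leave untouched. The whole lemma is therefore an instance of the commutation $(\mathcal{A}\otimes I_{\mathbb{R}^d})(\boldsymbol{\tau}\otimes\boldsymbol{v})=\mathcal{A}\boldsymbol{\tau}\otimes\boldsymbol{v}$ together with its dual statement for contraction.
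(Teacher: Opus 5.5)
Your proposal is correct and matches the paper's proof in substance. The paper proves the two trace identities directly from the definition, then obtains \eqref{eq:Atensorprop}--\eqref{eq:Ctildetensorprop} by inserting them into the closed-form expression \eqref{eq:mathcalA} and its $\widetilde{\mathcal{C}}$ analogue; this is exactly your alternative route and consistency check. Your primary slice-wise argument is a slightly more direct variant: it uses only the linearity of $\mathcal{A}$ and $\mathcal{C}$ and does not need the trace identities.
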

\begin{proof}
First, \eqref{eq:trtensor3prop}--\eqref{eq:trtensorprop} follow from the definition of the trace operator $\tr$.
By \eqref{eq:trtensorprop},
\begin{equation*}
\widetilde{\mathcal{A}}(\boldsymbol{\tau}\otimes\boldsymbol{v})=\frac{1}{2\mu} \left(\boldsymbol{\tau}\otimes\boldsymbol{v} - \frac{\lambda}{2\mu + d\lambda} (\tr(\boldsymbol{\tau})\boldsymbol{I} \otimes \boldsymbol{v}) \right),
\end{equation*}
which implies \eqref{eq:Atensorprop}. By the definition of $\widetilde{\mathcal{A}}\boldsymbol{\Psi}$ and \eqref{eq:trtensor3prop},
\begin{align*}
(\widetilde{\mathcal{A}}\boldsymbol{\Psi})\boldsymbol{v} = \frac{1}{2\mu} \left( \boldsymbol{\Psi}\boldsymbol{v} - \frac{\lambda}{2\mu + d\lambda} (\tr(\boldsymbol{\Psi})\cdot\boldsymbol{v})\boldsymbol{I} \right)= \frac{1}{2\mu} \left( \boldsymbol{\Psi}\boldsymbol{v} - \frac{\lambda}{2\mu + d\lambda} (\tr(\boldsymbol{\Psi}\boldsymbol{v}))\boldsymbol{I} \right).
\end{align*}
Thus, \eqref{eq:Atildetensorprop} follows. Similarly, we can derive \eqref{eq:Ctensorprop}--\eqref{eq:Ctildetensorprop} from \eqref{eq:trtensor3prop}--\eqref{eq:trtensorprop}.
\end{proof}

Given a vector-valued function $\boldsymbol{v}=(v_1,\ldots,v_d)^\intercal$, define
\[
\nabla\boldsymbol{v}:=\nabla\otimes\boldsymbol{v}=(\partial_i v_j)_{d\times d},
\qquad
\grad\boldsymbol{v}:=(\nabla\boldsymbol{v})^\intercal,
\qquad
\div\boldsymbol{v}:=\tr(\grad\boldsymbol{v}),
\]
and
\[
\boldsymbol{\varepsilon}(\boldsymbol{v})
:=\frac{1}{2}\left(\grad\boldsymbol{v}+(\grad\boldsymbol{v})^\intercal\right),
\qquad
\boldsymbol{\sigma}(\boldsymbol{v})
:=\mathcal{C}\boldsymbol{\varepsilon}(\boldsymbol{v})
=2\mu\boldsymbol{\varepsilon}(\boldsymbol{v})+\lambda(\div\boldsymbol{v})\boldsymbol{I}.
\]
Clearly, $\mathcal{A}\boldsymbol{\sigma}(\boldsymbol{v})=\boldsymbol{\varepsilon}(\boldsymbol{v})$. For a second-order tensor $\boldsymbol{\tau}=(\tau_{ij})$ and a third-order tensor $\boldsymbol{\Psi}=(\Psi_{ijk})$, define
\[
(\div\boldsymbol{\tau})_i:=\sum_{j=1}^d\partial_j\tau_{ij},
\qquad
(\grad\boldsymbol{\tau})_{ijk}:=\partial_k\tau_{ij},
\qquad
(\div\boldsymbol{\Psi})_{ij}:=\sum_{k=1}^d\partial_k\Psi_{ijk}.
\]

\subsection{Function spaces and mesh notation}
Let $\Omega\subset\mathbb{R}^d$ be a bounded polytope with boundary $\partial\Omega$. Given a bounded domain $D$, denote its diameter by $h_D$. For an integer $m\geq0$, denote by $H^m(D)$ the standard Sobolev space on $D$ with norm $\|\cdot\|_{m,D}$ and seminorm $|\cdot|_{m,D}$, and let $H_0^m(D)$ be the closure of $C_0^\infty(D)$ with respect to $\|\cdot\|_{m,D}$. The notation $(\cdot,\cdot)_D$ denotes the $L^2$ inner product on $D$. For $D=\Omega$, we abbreviate $\|\cdot\|_{m,D}$, $|\cdot|_{m,D}$, and $(\cdot,\cdot)_D$ as $\|\cdot\|_m$, $|\cdot|_m$, and $(\cdot,\cdot)$, respectively. The duality pairing between $H^{-1}(\Omega;\mathbb R^d)$ and $H_0^1(\Omega;\mathbb R^d)$ is denoted by $\langle\cdot,\cdot\rangle$.

Let $\mathcal{T}_h=\{T\}$ be a shape-regular simplicial mesh of $\Omega$. Set $h:=\max_{T\in\mathcal{T}_h}h_T$. Let $\mathcal{F}_h$ and $\mathring{\mathcal{F}}_h$ be the sets of all $(d-1)$-dimensional faces and interior faces, respectively. Let $T$ be a $d$-dimensional simplex with vertices $\{\texttt{v}_0,\ldots,\texttt{v}_d\}$. For $0\leq i\leq d$, let $F_i$ be the face opposite to $\texttt{v}_i$, and let $\lambda_i$ be the corresponding barycentric coordinate. Define $\mathcal{F}(T):=\{F_0,\ldots,F_d\}$. Let $\boldsymbol{n}_{\partial T}$ be the piecewise constant unit outward normal vector on $\partial T$. For each $F\in\mathcal{F}_h$, let $\boldsymbol{n}_F$ be a fixed unit normal vector, chosen as the outward unit normal when $F\subset\partial\Omega$. When no confusion arises, both $\boldsymbol{n}_{\partial T}$ and $\boldsymbol{n}_F$ are abbreviated as $\boldsymbol{n}$.

For two adjacent simplices $T^+$ and $T^-$ sharing an interior face $F$, define the jump of a function $v$ by
\[
[\![v]\!]|_F
:=(v|_{T^+})|_F\boldsymbol{n}_F\cdot\boldsymbol{n}_{\partial T^+}
+(v|_{T^-})|_F\boldsymbol{n}_F\cdot\boldsymbol{n}_{\partial T^-}.
\]
On a boundary face $F\subset\partial\Omega$, set $[\![v]\!]|_F:=v|_F$.

For a bounded domain $D\subset\mathbb{R}^d$ and an integer $k\geq0$, let $\mathbb{P}_k(D)$ be the space of polynomials on $D$ of total degree at most $k$, and set $\mathbb{P}_k(D):=\{0\}$ for $k<0$. For a linear space $V(D)$, define the broken spaces
\[
V(\mathcal{T}_h):=\prod_{T\in\mathcal{T}_h}V(T),
\qquad
V(\mathcal{F}_h):=\prod_{F\in\mathcal{F}_h}V(F).
\]
Denote $\mathbb{P}_k(\mathring{\mathcal{F}}_h):=\mathbb{P}_k(\mathcal{F}_h)\cap L^2(\mathring{\mathcal{F}}_h)$, where
\[
L^2(\mathring{\mathcal{F}}_h)
:=\{v\in L^2(\mathcal{F}_h):v|_F=0\quad\forall~F\in\mathcal{F}_h\setminus\mathring{\mathcal{F}}_h\}.
\]
For a finite-dimensional tensor space $\mathbb{X}\in\{\mathbb{R}^d,\mathbb{S},\mathbb{S}\otimes\mathbb{R}^d\}$, set
\[
V(D;\mathbb{X}):=V(D)\otimes\mathbb{X},
\quad
V(\mathcal{T}_h;\mathbb{X}):=\prod_{T\in\mathcal{T}_h}V(T;\mathbb{X}),
\quad
V(\mathcal{F}_h;\mathbb{X}):=\prod_{F\in\mathcal{F}_h}V(F;\mathbb{X}).
\]
The $L^2$-orthogonal projection onto $\mathbb{P}_k(D;\mathbb{X})$ is denoted by $Q_{k,D}$. Let $Q_{k,h}$ denote the $L^2$-orthogonal projection onto $\mathbb{P}_k(\mathcal{T}_h)$ or $\mathbb{P}_k(\mathcal{T}_h;\mathbb{X})$, and let $Q_{k,\mathcal{F}_h}$ denote the corresponding projection on $\mathcal{F}_h$.

We use $\grad_h$, $\div_h$, $\boldsymbol{\varepsilon}_h$, and $\boldsymbol{\sigma}_h$ for the elementwise versions of $\grad$, $\div$, $\boldsymbol{\varepsilon}$, and $\boldsymbol{\sigma}$, respectively. For a piecewise smooth scalar-, vector-, or tensor-valued function $v$, define
\[
\|v\|_{s,h}^2:=\sum_{T\in\mathcal{T}_h}\|v\|_{s,T}^2,
\qquad
|v|_{s,h}^2:=\sum_{T\in\mathcal{T}_h}|v|_{s,T}^2.
\]
For $\boldsymbol{v}\in H^1(\mathcal{T}_h;\mathbb{R}^d)$, introduce the discrete energy norm
\begin{align*}
{\interleave\boldsymbol{v}\interleave}_{1,h}^2
&:=(\boldsymbol{\sigma}_h(\boldsymbol{v}),\boldsymbol{\varepsilon}_h(\boldsymbol{v}))
+\sum_{F\in\mathcal{F}_h}h_F^{-1}\|[\![\boldsymbol{v}]\!]\|_{0,F}^2\\
&=2\mu\|\boldsymbol{\varepsilon}_h(\boldsymbol{v})\|_0^2
+\lambda\|\div_h\boldsymbol{v}\|_0^2
+\sum_{F\in\mathcal{F}_h}h_F^{-1}\|[\![\boldsymbol{v}]\!]\|_{0,F}^2.
\end{align*}
In particular, for $\boldsymbol{v}\in H_0^1(\Omega;\mathbb{R}^d)$,
\[
{\interleave\boldsymbol{v}\interleave}_{1,h}^2
={\interleave\boldsymbol{v}\interleave}_1^2
:=(\boldsymbol{\sigma}(\boldsymbol{v}),\boldsymbol{\varepsilon}(\boldsymbol{v}))
=2\mu\|\boldsymbol{\varepsilon}(\boldsymbol{v})\|_0^2
+\lambda\|\div\boldsymbol{v}\|_0^2.
\]
Throughout the paper, $\mu$ is fixed and positive. We use $a\lesssim b$ to mean $a\leq Cb$, where $C$ may depend on $\Omega$, $d$, and $\mu$ and, for discrete estimates, on the mesh shape regularity, but is independent of the mesh size $h$, the size parameter $\iota$, and the Lam\'e constant $\lambda$.
And $a\eqsim b$ means $a\lesssim b\lesssim a$. 

\subsection{Distributional mixed formulation}
By introducing the third-order double stress tensor $\boldsymbol{\Phi}:=\iota^2\grad\boldsymbol{\sigma}(\boldsymbol{u})$, it follows from the slice-wise definition of $\widetilde{\mathcal{A}}$ that
\begin{equation*}
\widetilde{\mathcal{A}}\boldsymbol{\Phi}= \iota^2\grad(\mathcal{A}\boldsymbol{\sigma}(\boldsymbol{u}))= \iota^2\grad\boldsymbol{\varepsilon}(\boldsymbol{u}).
\end{equation*}
Then we rewrite the SGE model \eqref{SGE0} as the following second-order system
\begin{equation}\label{SGE1}
	\begin{cases}
		\widetilde{\mathcal{A}}\boldsymbol{\Phi} = \iota^2\grad\boldsymbol{\varepsilon}(\boldsymbol{u}) &\mbox{in} \ \Omega,\\
		\div\div\boldsymbol{\Phi} - \div \boldsymbol{\sigma}(\boldsymbol{u}) = \boldsymbol{f} &\mbox{in} \ \Omega,\\
		\boldsymbol{u}=\partial_{n}\boldsymbol{u}=\boldsymbol{0} &\mbox{on} \ \partial\Omega.
	\end{cases}
\end{equation}

To present the distributional mixed formulation of the second-order system \eqref{SGE1}, introduce the Hilbert space
\[
H^{-1}(\div\div,\Omega;\mathbb{S}\otimes\mathbb{R}^d)
:=\{\boldsymbol{\Psi}\in L^2(\Omega;\mathbb{S}\otimes\mathbb{R}^d):\div\div\boldsymbol{\Psi}\in H^{-1}(\Omega;\mathbb{R}^d)\}.
\]
For any $\boldsymbol{g}\in H^{-1}(\Omega;\mathbb{R}^d)$, define
\[
\|\boldsymbol{g}\|_{-1,\mathcal{A}}
:=\sup_{\boldsymbol{0}\ne\boldsymbol{v}\in H_0^1(\Omega;\mathbb{R}^d)}
\frac{|\langle\boldsymbol{g},\boldsymbol{v}\rangle|}{{\interleave\boldsymbol{v}\interleave}_1}.
\]
By the first Korn inequality \cite[Corollary 11.2.22]{BrennerScott2008},
\[
\|\boldsymbol{g}\|_{-1,\mathcal A}\lesssim\|\boldsymbol{g}\|_0
\qquad\forall~\boldsymbol{g}\in L^2(\Omega;\mathbb R^d),
\]
with a constant independent of $\lambda$.
The parameter-weighted graph norm is
\[
\|\boldsymbol{\Psi}\|^2_{H_{\mathcal{A}}^{-1}(\iota\div\div)}
:=\iota^{-2}\|\boldsymbol{\Psi}\|_{\widetilde{\mathcal{A}}}^2
+\|\div\div\boldsymbol{\Psi}\|_{-1,\mathcal{A}}^2,
\]
where
\[
\|\boldsymbol{\Psi}\|_{\widetilde{\mathcal{A}}}^2
:=(\widetilde{\mathcal{A}}\boldsymbol{\Psi},\boldsymbol{\Psi})
=\frac{1}{2\mu}\|\dev\boldsymbol{\Psi}\|_0^2
+\frac{1}{d(2\mu+d\lambda)}\|\tr(\boldsymbol{\Psi})\|_0^2.
\]
%Note that $\|\boldsymbol{\Psi}\|_{\widetilde{\mathcal{A}}}\lesssim\|\boldsymbol{\Psi}\|_0$.

A distributional mixed formulation of the second-order system \eqref{SGE1} is to find $(\boldsymbol{\Phi},\boldsymbol{u})\in H^{-1}(\div\div,\Omega; \mathbb{S}\otimes\mathbb{R}^d)\times H^1_0(\Omega;\mathbb{R}^d)$ such that
\begin{subequations}\label{SGE-mix}
	\begin{align}
		\label{SGE-mix1}
		a(\boldsymbol{\Phi},\boldsymbol{\Psi})+b(\boldsymbol{\Psi},\boldsymbol{u}) &= 0 \quad\quad\quad\;\;\, \forall \ \boldsymbol{\Psi}\in  H^{-1}(\div\div,\Omega; \mathbb{S}\otimes\mathbb{R}^d), \\
		\label{SGE-mix2}
		b(\boldsymbol{\Phi},\boldsymbol{v})-c(\boldsymbol{u},\boldsymbol{v}) &= -(\boldsymbol{f},\boldsymbol{v}) \quad \forall \ \boldsymbol{v}\in H^1_0(\Omega;\mathbb{R}^d),
	\end{align}
\end{subequations}
where 
$$a(\boldsymbol{\Phi},\boldsymbol{\Psi}):=\iota^{-2}(\widetilde{\mathcal{A}}\boldsymbol{\Phi},\boldsymbol{\Psi}), \quad b(\boldsymbol{\Psi},\boldsymbol{v}):=-\langle\div\div\boldsymbol{\Psi}, \boldsymbol{v}\rangle, \quad c(\boldsymbol{u},\boldsymbol{v}):=(\boldsymbol{\sigma}(\boldsymbol{u}),\boldsymbol{\varepsilon}(\boldsymbol{v})).$$
\subsection{Well-posedness and equivalence}

\begin{theorem}
The distributional mixed formulation \eqref{SGE-mix} admits a unique solution $(\boldsymbol{\Phi},\boldsymbol{u})\in H^{-1}(\div\div,\Omega; \mathbb{S}\otimes\mathbb{R}^d)\times H^1_0(\Omega;\mathbb{R}^d)$. Moreover,
\begin{equation*}
\|\boldsymbol{\Phi}\|_{H_{\mathcal{A}}^{-1}(\iota\div\div)} + {\interleave\boldsymbol{u}\interleave}_{1} \lesssim \|\boldsymbol{f}\|_{-1,\mathcal{A}},
\end{equation*}
where the constant is independent of $\iota$ and $\lambda$.
\end{theorem}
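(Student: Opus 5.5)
The problem \eqref{SGE-mix} is a perturbed saddle-point problem with $c$ symmetric positive semidefinite and $a$ symmetric positive. Rather than invoke abstract Brezzi-type theory, I would prove a uniform inf-sup condition for the whole symmetric bilinear form
\[
\mathcal{B}((\boldsymbol{\Phi},\boldsymbol{u}),(\boldsymbol{\Psi},\boldsymbol{v})):=a(\boldsymbol{\Phi},\boldsymbol{\Psi})+b(\boldsymbol{\Psi},\boldsymbol{u})+b(\boldsymbol{\Phi},\boldsymbol{v})-c(\boldsymbol{u},\boldsymbol{v})
\]
on $X:=H^{-1}(\div\div,\Omega;\mathbb{S}\otimes\mathbb{R}^d)\times H_0^1(\Omega;\mathbb{R}^d)$. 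The norm on $X$ is $\|\boldsymbol{\Psi}\|_{H_{\mathcal{A}}^{-1}(\iota\div\div)}+{\interleave\boldsymbol{v}\interleave}_1$. Boundedness of $\mathcal{B}$ is immediate with constant one, by Cauchy--Schwarz in the $\widetilde{\mathcal{A}}$-inner product, the definition of $\|\cdot\|_{-1,\mathcal{A}}$, and Cauchy--Schwarz for $c$. The load functional satisfies $|(\boldsymbol{f},\boldsymbol{v})|\le\|\boldsymbol{f}\|_{-1,\mathcal{A}}{\interleave\boldsymbol{v}\interleave}_1$. Once the inf-sup holds, Babu\v{s}ka--Ne\v{c}as theory (symmetry gives the transposed condition for free) yields existence, uniqueness and the stated bound.

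Fix $(\boldsymbol{\Phi},\boldsymbol{u})\in X$. I would build the test function in three pieces and take a linear combination.

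First, take $(\boldsymbol{\Psi},\boldsymbol{v})=(\boldsymbol{\Phi},-\boldsymbol{u})$. This gives
\[
\mathcal{B}=\iota^{-2}\|\boldsymbol{\Phi}\|_{\widetilde{\mathcal{A}}}^2+{\interleave\boldsymbol{u}\interleave}_1^2,
\]
so this choice controls everything except $\|\div\div\boldsymbol{\Phi}\|_{-1,\mathcal{A}}$.

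Second, to control $\div\div\boldsymbol{\Phi}$, use a Riesz representative. Let $\boldsymbol{w}\in H_0^1(\Omega;\mathbb{R}^d)$ solve $c(\boldsymbol{w},\boldsymbol{v})=-\langle\div\div\boldsymbol{\Phi},\boldsymbol{v}\rangle$ for all $\boldsymbol{v}$. This is well posed by Korn's inequality, and ${\interleave\boldsymbol{w}\interleave}_1=\|\div\div\boldsymbol{\Phi}\|_{-1,\mathcal{A}}$. Testing with $(\boldsymbol{0},\boldsymbol{w})$ gives
\[
b(\boldsymbol{\Phi},\boldsymbol{w})-c(\boldsymbol{u},\boldsymbol{w})=\|\div\div\boldsymbol{\Phi}\|_{-1,\mathcal{A}}^2-c(\boldsymbol{u},\boldsymbol{w}).
\]
The cross term satisfies $|c(\boldsymbol{u},\boldsymbol{w})|\le{\interleave\boldsymbol{u}\interleave}_1\|\div\div\boldsymbol{\Phi}\|_{-1,\mathcal{A}}$, so Young's inequality absorbs it into the ${\interleave\boldsymbol{u}\interleave}_1^2$ term from the first step.

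Third, set $(\boldsymbol{\Psi},\boldsymbol{v})=(\boldsymbol{\Phi},-\boldsymbol{u}+\tfrac12\boldsymbol{w})$. This gives
\[
\mathcal{B}\ge\iota^{-2}\|\boldsymbol{\Phi}\|_{\widetilde{\mathcal{A}}}^2+\tfrac12{\interleave\boldsymbol{u}\interleave}_1^2+\tfrac38\|\div\div\boldsymbol{\Phi}\|_{-1,\mathcal{A}}^2 .
\]
The test function's norm is at most a constant times the norm of $(\boldsymbol{\Phi},\boldsymbol{u})$. All constants are absolute, hence independent of $\iota$ and $\lambda$.

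Notably, the argument never needs an inf-sup condition for $b$ alone, because $c$ is coercive on all of $H_0^1$ in the ${\interleave\cdot\interleave}_1$ norm; this is what makes robustness automatic. The main obstacle is that $H^{-1}(\div\div,\Omega;\mathbb{S}\otimes\mathbb{R}^d)$ must be a Hilbert space under the weighted graph norm. This requires completeness and that $\|\cdot\|_{\widetilde{\mathcal{A}}}$ be equivalent to $\|\cdot\|_0$ for each fixed $\lambda$, which holds, though the constants are not uniform in $\lambda$. That is acceptable for existence, since robustness only enters through the inf-sup constant above.
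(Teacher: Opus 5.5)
Your proof is correct, but it is organized differently from the paper's. The paper constructs no test functions. It checks the two hypotheses of Zulehner's abstract theorem for perturbed saddle-point problems and reads off well-posedness and the robust bound from that theorem. The first hypothesis is $a(\boldsymbol{\Psi},\boldsymbol{\Psi})+\sup_{\boldsymbol{v}}b^2(\boldsymbol{\Psi},\boldsymbol{v})/{\interleave\boldsymbol{v}\interleave}_1^2=\|\boldsymbol{\Psi}\|^2_{H_{\mathcal{A}}^{-1}(\iota\div\div)}$, which is an identity by the definition of the graph norm. The second is $c(\boldsymbol{v},\boldsymbol{v})+\sup_{\boldsymbol{\Psi}}b^2(\boldsymbol{\Psi},\boldsymbol{v})/\|\boldsymbol{\Psi}\|^2_{H_{\mathcal{A}}^{-1}(\iota\div\div)}\eqsim{\interleave\boldsymbol{v}\interleave}_1^2$, which is immediate from $c(\boldsymbol{v},\boldsymbol{v})={\interleave\boldsymbol{v}\interleave}_1^2$ and the bound of the supremum by ${\interleave\boldsymbol{v}\interleave}_1^2$.

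Your argument rests on exactly these two facts. Your Riesz representative $\boldsymbol{w}$ attains the supremum in the first condition, and the coercivity of $c$ on all of $H_0^1(\Omega;\mathbb{R}^d)$ is the second. You then convert them by hand into an inf-sup condition for the symmetric form $\mathcal{B}$, using the test pair $(\boldsymbol{\Phi},-\boldsymbol{u}+\tfrac12\boldsymbol{w})$, and conclude with Babu\v{s}ka--Ne\v{c}as. In effect you reprove Zulehner's result in this special case.

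What your route buys:
\begin{itemize}
\item It is self-contained.
\item It gives explicit constants: continuity constant $1$ and inf-sup constant $1/8$ for the sum norm, hence $\|\boldsymbol{\Phi}\|_{H_{\mathcal{A}}^{-1}(\iota\div\div)}+{\interleave\boldsymbol{u}\interleave}_1\le 8\|\boldsymbol{f}\|_{-1,\mathcal{A}}$.
\item It explicitly addresses the completeness of $H^{-1}(\div\div,\Omega;\mathbb{S}\otimes\mathbb{R}^d)$ under the weighted norm, which the paper's proof uses tacitly.
\end{itemize}
What the paper's route buys: it is shorter, and it places the continuous problem in the same abstract framework that is reused for the discrete stability estimate in Theorem~\ref{wp-theorem}.
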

\begin{proof}
% By Korn's inequality, ${\interleave\cdot\interleave}_1$ is a Hilbert norm on $H_0^1(\Omega;\mathbb R^d)$. For each fixed $\lambda$, it is equivalent to the standard $H^1$ norm, and hence $\|\cdot\|_{-1,\mathcal A}$ is equivalent to the standard $H^{-1}$ norm. Moreover, $\|\cdot\|_{\widetilde{\mathcal A}}$ is equivalent to the $L^2$ norm. Let $D:=\div\div$ denote the distributional operator with domain
% \[
% \mathcal D(D):=H^{-1}(\div\div,\Omega;\mathbb S\otimes\mathbb R^d).
% \]
% Then $D:\mathcal D(D)\subset L^2(\Omega;\mathbb S\otimes\mathbb R^d)\to H^{-1}(\Omega;\mathbb R^d)$ is a closed unbounded operator. Therefore, for each fixed $\iota$ and $\lambda$, the space $H^{-1}(\div\div,\Omega;\mathbb S\otimes\mathbb R^d)$ equipped with $\|\cdot\|_{H_{\mathcal A}^{-1}(\iota\div\div)}$ is a Hilbert space.

% The operator $\widetilde{\mathcal A}$ is self-adjoint and positive definite. 
First, the bilinear forms $a$, $b$, and $c$ are continuous in the parameter-weighted norms. 
% Indeed, the Cauchy--Schwarz inequality and the definition of $\|\cdot\|_{-1,\mathcal A}$ give
% \begin{align*}
% |a(\boldsymbol{\Phi},\boldsymbol{\Psi})|
% &\leq\|\boldsymbol{\Phi}\|_{H_{\mathcal A}^{-1}(\iota\div\div)}
% \|\boldsymbol{\Psi}\|_{H_{\mathcal A}^{-1}(\iota\div\div)},\\
% |b(\boldsymbol{\Psi},\boldsymbol{v})|
% &\leq\|\boldsymbol{\Psi}\|_{H_{\mathcal A}^{-1}(\iota\div\div)}
% {\interleave\boldsymbol{v}\interleave}_1,\\
% |c(\boldsymbol{u},\boldsymbol{v})|
% &\leq{\interleave\boldsymbol{u}\interleave}_1
% {\interleave\boldsymbol{v}\interleave}_1.
% \end{align*}
For any $\boldsymbol{\Psi}\in H^{-1}(\div\div,\Omega; \mathbb{S}\otimes\mathbb{R}^d)$, by the definition of the norm $\|\cdot\|_{H_{\mathcal{A}}^{-1}(\iota\div\div)}$, we have
\begin{equation}\label{eq:202506181}
a(\boldsymbol{\Psi}, \boldsymbol{\Psi})+ \sup_{\boldsymbol{0}\ne\boldsymbol{v}\in H^1_0(\Omega;\mathbb{R}^d)}\dfrac{b^2(\boldsymbol{\Psi}, \boldsymbol{v})}{{\interleave\boldsymbol{v}\interleave}^2_{1}} =\|\boldsymbol{\Psi}\|^2_{H_{\mathcal{A}}^{-1}(\iota\div\div)}.
\end{equation}
On the other hand, for any $\boldsymbol{\Psi}\in H^{-1}(\div\div,\Omega; \mathbb{S}\otimes\mathbb{R}^d)$ and $\boldsymbol{v}\in H^1_0(\Omega;\mathbb{R}^d)$, we have
\begin{equation*}
b^2(\boldsymbol{\Psi}, \boldsymbol{v})\leq \|\div\div\boldsymbol{\Psi}\|_{-1,\mathcal{A}}^2{\interleave\boldsymbol{v}\interleave}_{1}^2\leq \|\boldsymbol{\Psi}\|_{H_{\mathcal{A}}^{-1}(\iota\div\div)}^2{\interleave\boldsymbol{v}\interleave}_{1}^2,
\end{equation*}
which implies
\begin{equation*}
\sup_{\boldsymbol{0}\ne\boldsymbol{\Psi}\in H^{-1}(\div\div,\Omega; \mathbb{S}\otimes\mathbb{R}^d)}\frac{b^2(\boldsymbol{\Psi}, \boldsymbol{v})}{\|\boldsymbol{\Psi}\|_{H_{\mathcal{A}}^{-1}(\iota\div\div)}^2} \leq  {\interleave\boldsymbol{v}\interleave}^2_{1}\quad\forall~\boldsymbol{v}\in H^1_0(\Omega;\mathbb{R}^d).
\end{equation*}
Since $c(\boldsymbol{v}, \boldsymbol{v}) = \interleave \boldsymbol{v} \interleave^2_{1}$, it follows that
\begin{equation*}
c(\boldsymbol{v}, \boldsymbol{v}) + \sup_{\boldsymbol{0}\ne\boldsymbol{\Psi}\in H^{-1}(\div\div,\Omega; \mathbb{S}\otimes\mathbb{R}^d)}\frac{b^2(\boldsymbol{\Psi}, \boldsymbol{v})}{\|\boldsymbol{\Psi}\|_{H_{\mathcal{A}}^{-1}(\iota\div\div)}^2} \eqsim {\interleave\boldsymbol{v}\interleave}^2_{1}\quad\forall~\boldsymbol{v}\in H^1_0(\Omega;\mathbb{R}^d).
\end{equation*}
Thus, \eqref{eq:202506181} and the last equivalence are precisely conditions (2.12)--(2.13) of \cite[Theorem~2.6]{zulehner2011}. Zulehner's theorem yields the well-posedness of \eqref{SGE-mix} and the stated parameter-robust estimate.
% Finally, by combining the last equivalence and \eqref{eq:202506181}, we apply the Zulehner theory \cite{zulehner2011} to get the well-posedness of the mixed formulation \eqref{SGE-mix}. 
\end{proof}

Since
\[
H(\div,\Omega;\mathbb{S}\otimes\mathbb{R}^d)
\subset H^{-1}(\div\div,\Omega;\mathbb{S}\otimes\mathbb{R}^d),
\]
we have
\begin{equation}\label{eq:Hdiv-embedding}
-\langle\div\div\boldsymbol{\Psi},\boldsymbol{v}\rangle
=(\div\boldsymbol{\Psi},\boldsymbol{\varepsilon}(\boldsymbol{v}))
\quad
\forall~\boldsymbol{\Psi}\in H(\div,\Omega;\mathbb{S}\otimes\mathbb{R}^d),
\ \boldsymbol{v}\in H_0^1(\Omega;\mathbb{R}^d),
\end{equation}
where
\[
H(\div,\Omega;\mathbb{S}\otimes\mathbb{R}^d)
:=\{\boldsymbol{\Psi}\in L^2(\Omega;\mathbb{S}\otimes\mathbb{R}^d):\div\boldsymbol{\Psi}\in L^2(\Omega;\mathbb{S})\}.
\]
Indeed,
\[
|\langle\div\div\boldsymbol{\Psi},\boldsymbol{v}\rangle|
\leq\|\div\boldsymbol{\Psi}\|_0\|\boldsymbol{\varepsilon}(\boldsymbol{v})\|_0
\lesssim\|\div\boldsymbol{\Psi}\|_0{\interleave\boldsymbol{v}\interleave}_1,
\]
which proves the continuous embedding. We will discretize $\boldsymbol{\Phi}$ using a finite element subspace of this $H(\div)$ space.

\begin{remark}\rm
Since $\mathcal{A}$ and $\mathcal{C}$ have constant coefficients and act only on the first two tensor indices,
\[
\div(\widetilde{\mathcal{A}}\boldsymbol{\Psi})
=\mathcal{A}(\div\boldsymbol{\Psi}),
\qquad
\div(\widetilde{\mathcal{C}}\boldsymbol{\Psi})
=\mathcal{C}(\div\boldsymbol{\Psi}).
\]
Consequently, $\widetilde{\mathcal{A}}$ and $\widetilde{\mathcal{C}}$ map
$H(\div,\Omega;\mathbb{S}\otimes\mathbb{R}^d)$ into itself. Since
$\widetilde{\mathcal{C}}=\widetilde{\mathcal{A}}^{-1}$, they are inverse isomorphisms of
$H(\div,\Omega;\mathbb{S}\otimes\mathbb{R}^d)$.
\end{remark}

We interpret problem \eqref{SGE0} in the following weak sense: find $\boldsymbol{u}\in H_0^2(\Omega;\mathbb{R}^d)$ such that
\begin{equation}\label{SGE-weak}
\iota^2(\grad\boldsymbol{\sigma}(\boldsymbol{u}),\grad\boldsymbol{\varepsilon}(\boldsymbol{v}))
+(\boldsymbol{\sigma}(\boldsymbol{u}),\boldsymbol{\varepsilon}(\boldsymbol{v}))
=(\boldsymbol{f},\boldsymbol{v})
\quad\forall~\boldsymbol{v}\in H_0^2(\Omega;\mathbb{R}^d).
\end{equation}

\begin{theorem}
A pair $(\boldsymbol{\Phi},\boldsymbol{u})$ solves the distributional mixed formulation \eqref{SGE-mix} if and only if $\boldsymbol{u}\in H_0^2(\Omega;\mathbb{R}^d)$ solves \eqref{SGE-weak} and
\[
\boldsymbol{\Phi}=\iota^2\grad\boldsymbol{\sigma}(\boldsymbol{u}).
\]
Consequently, the distributional mixed formulation \eqref{SGE-mix} is equivalent to the weak formulation \eqref{SGE-weak} of the SGE problem~\eqref{SGE0}.
\end{theorem}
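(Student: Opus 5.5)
The plan is to prove the ``if'' direction by direct verification and then obtain the ``only if'' direction for free from uniqueness. Uniqueness is provided by the well-posedness theorem for \eqref{SGE-mix} proved above, together with unique solvability of \eqref{SGE-weak}. This avoids having to recover $H^2$-regularity and the boundary condition $\partial_n\boldsymbol{u}=0$ directly from the distributional formulation, which would be delicate.

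\emph{Step 1 (a duality identity).} For $\boldsymbol{\Psi}\in L^2(\Omega;\mathbb{S}\otimes\mathbb{R}^d)$ and $\boldsymbol{v}\in C_0^\infty(\Omega;\mathbb{R}^d)$, the definition of distributional derivatives gives
\[
\langle\div\div\boldsymbol{\Psi},\boldsymbol{v}\rangle=\sum_{i,j,k}(\Psi_{ijk},\partial_k\partial_j v_i).
\]
Because $\Psi_{ijk}=\Psi_{jik}$, this equals $(\boldsymbol{\Psi},\grad\boldsymbol{\varepsilon}(\boldsymbol{v}))$. The right-hand side is continuous in $\boldsymbol{v}$ for the $H^2$ norm, so by density the identity holds for all $\boldsymbol{v}\in H_0^2(\Omega;\mathbb{R}^d)$. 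When $\boldsymbol{\Psi}\in H^{-1}(\div\div,\Omega;\mathbb{S}\otimes\mathbb{R}^d)$, the left-hand side is the $H^{-1}$--$H_0^1$ pairing, so the two sides agree on $H_0^2$. The sign and index conventions in this identity are the only place where care is needed.

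\emph{Step 2 (existence for \eqref{SGE-weak}).} For fixed $\iota,\lambda$, I will apply Lax--Milgram on $H_0^2(\Omega;\mathbb{R}^d)$. Coercivity follows because each second derivative $\partial_j\partial_k v_i$ is a linear combination of first derivatives of entries of $\boldsymbol{\varepsilon}(\boldsymbol{v})$. Hence $|\boldsymbol{v}|_2\lesssim\|\grad\boldsymbol{\varepsilon}(\boldsymbol{v})\|_0$, and $(\mathcal{C}\boldsymbol{\tau},\boldsymbol{\tau})\ge 2\mu|\boldsymbol{\tau}|^2$ applied slice-wise gives
\[
(\grad\boldsymbol{\sigma}(\boldsymbol{v}),\grad\boldsymbol{\varepsilon}(\boldsymbol{v}))\ge2\mu\|\grad\boldsymbol{\varepsilon}(\boldsymbol{v})\|_0^2 .
\]
Combined with Poincaré's inequality on $H_0^2$, this yields coercivity. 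Thus \eqref{SGE-weak} has a unique solution $\boldsymbol{u}$.

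\emph{Step 3 (the pair solves \eqref{SGE-mix}).} Set $\boldsymbol{\Phi}:=\iota^2\grad\boldsymbol{\sigma}(\boldsymbol{u})\in L^2(\Omega;\mathbb{S}\otimes\mathbb{R}^d)$.
\begin{itemize}
\item For $\boldsymbol{v}\in C_0^\infty$, Step 1 and \eqref{SGE-weak} give
\[
\langle\div\div\boldsymbol{\Phi},\boldsymbol{v}\rangle=(\boldsymbol{f},\boldsymbol{v})-(\boldsymbol{\sigma}(\boldsymbol{u}),\boldsymbol{\varepsilon}(\boldsymbol{v})).
\]
The right-hand side is bounded in the $H^1$ norm, so $\div\div\boldsymbol{\Phi}\in H^{-1}(\Omega;\mathbb{R}^d)$. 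Extending by density to $H_0^1$ yields \eqref{SGE-mix2}.
\item For \eqref{SGE-mix1}, the slice-wise definition of $\widetilde{\mathcal{A}}$ gives $\iota^{-2}\widetilde{\mathcal{A}}\boldsymbol{\Phi}=\grad\boldsymbol{\varepsilon}(\boldsymbol{u})$. Since $\boldsymbol{u}\in H_0^2$, Step 1 gives
\[
a(\boldsymbol{\Phi},\boldsymbol{\Psi})=(\grad\boldsymbol{\varepsilon}(\boldsymbol{u}),\boldsymbol{\Psi})=\langle\div\div\boldsymbol{\Psi},\boldsymbol{u}\rangle=-b(\boldsymbol{\Psi},\boldsymbol{u}).
\]
\end{itemize}

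\emph{Step 4 (conclusion).} By the well-posedness theorem, \eqref{SGE-mix} has exactly one solution, and by Step 3 the pair from Steps 2--3 is a solution. So any solution $(\boldsymbol{\Phi},\boldsymbol{u})$ of \eqref{SGE-mix} coincides with it, which means $\boldsymbol{u}\in H_0^2$ solves \eqref{SGE-weak} and $\boldsymbol{\Phi}=\iota^2\grad\boldsymbol{\sigma}(\boldsymbol{u})$. Conversely, if $\boldsymbol{u}$ solves \eqref{SGE-weak}, it is the unique solution from Step 2, and Step 3 shows that $(\iota^2\grad\boldsymbol{\sigma}(\boldsymbol{u}),\boldsymbol{u})$ solves \eqref{SGE-mix}. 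This proves the equivalence.
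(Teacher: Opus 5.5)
Your proposal is correct. Step 3 is essentially the paper's proof of the ``if'' direction: you test \eqref{SGE-weak} with $C_0^\infty$ functions to get $\div\div\boldsymbol{\Phi}\in H^{-1}$, then approximate $\boldsymbol{u}$ in $H^2$ by $C_0^\infty$ functions to verify \eqref{SGE-mix1}. Your ``only if'' direction, however, takes a genuinely different route.

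The paper argues directly from \eqref{SGE-mix1}. Testing with $C_0^\infty$ tensors gives $\grad\boldsymbol{\varepsilon}(\boldsymbol{u})=\iota^{-2}\widetilde{\mathcal A}\boldsymbol{\Phi}\in L^2$, so $\boldsymbol{u}\in H^2$ by the second-derivative identity. Testing with all $\boldsymbol{\Psi}\in H(\div,\Omega;\mathbb{S}\otimes\mathbb{R}^d)$ and using surjectivity of the normal trace then gives $\gamma(\boldsymbol{\varepsilon}(\boldsymbol{u}))=\boldsymbol{0}$. Together with $\gamma\boldsymbol{u}=\boldsymbol{0}$, this yields $\boldsymbol{u}\in H_0^2$.

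You instead combine existence for \eqref{SGE-weak} with uniqueness from the preceding well-posedness theorem. Existence comes from Lax--Milgram on $H_0^2$, with coercivity from the same second-derivative identity and $(\mathcal C\boldsymbol\tau,\boldsymbol\tau)\ge 2\mu|\boldsymbol\tau|^2$.

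Your route is shorter and avoids the trace machinery. That machinery includes normal-trace surjectivity onto $H^{-1/2}(\partial\Omega;\mathbb S)$. It also includes the passage from $\gamma(\boldsymbol\varepsilon(\boldsymbol u))=\boldsymbol 0$ and $\gamma\boldsymbol u=\boldsymbol 0$ to $\boldsymbol u\in H_0^2$, via tangential derivatives on faces and the trace characterization of $H_0^2$ on a polytope; the paper only sketches this. The price is that your argument depends on the Zulehner-based well-posedness theorem and on a separate existence result.

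The paper's direct argument does not need global uniqueness. More importantly, it shows how \eqref{SGE-mix1} weakly enforces the clamped condition $\partial_n\boldsymbol u=\boldsymbol 0$, which is the feature the paper advertises for the formulation. Your argument proves the equivalence without revealing where that boundary condition comes from.
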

\begin{proof}
Suppose first that $\boldsymbol{u}\in H_0^2(\Omega;\mathbb{R}^d)$ solves \eqref{SGE-weak}, and set $\boldsymbol{\Phi}:=\iota^2\grad\boldsymbol{\sigma}(\boldsymbol{u})$. Testing \eqref{SGE-weak} with compactly supported smooth functions shows in the sense of distributions that
\[
\div\div\boldsymbol{\Phi}-\div\boldsymbol{\sigma}(\boldsymbol{u})=\boldsymbol{f}.
\]
Since $\boldsymbol{u}\in H^2(\Omega;\mathbb{R}^d)$ and $\boldsymbol f\in L^2(\Omega;\mathbb R^d)$, we have
$\boldsymbol f+\div\boldsymbol{\sigma}(\boldsymbol{u})\in L^2(\Omega;\mathbb R^d)\subset H^{-1}(\Omega;\mathbb{R}^d)$. Hence
$\boldsymbol{\Phi}\in H^{-1}(\div\div,\Omega;\mathbb{S}\otimes\mathbb{R}^d)$. Since both $\div\div\boldsymbol{\Phi}-\div\boldsymbol{\sigma}(\boldsymbol{u})$ and $\boldsymbol f$ belong to $H^{-1}(\Omega;\mathbb R^d)$, the distributional identity extends by density to every test function in $H_0^1(\Omega;\mathbb R^d)$, and \eqref{SGE-mix2} follows. Moreover,
\[
\widetilde{\mathcal A}\boldsymbol{\Phi}
=\iota^2\grad\boldsymbol{\varepsilon}(\boldsymbol{u}).
\]
For every $\boldsymbol{\Psi}\in H^{-1}(\div\div,\Omega;\mathbb{S}\otimes\mathbb{R}^d)$, choose $\boldsymbol{u}_m\in C_0^\infty(\Omega;\mathbb R^d)$ such that $\boldsymbol{u}_m\to\boldsymbol{u}$ in $H^2(\Omega;\mathbb R^d)$. Then
\begin{align*}
\langle\div\div\boldsymbol{\Psi},\boldsymbol{u}\rangle
&=\lim_{m\to\infty}\langle\div\div\boldsymbol{\Psi},\boldsymbol{u}_m\rangle\\
&=\lim_{m\to\infty}(\boldsymbol{\Psi},\grad\boldsymbol{\varepsilon}(\boldsymbol{u}_m))
=(\boldsymbol{\Psi},\grad\boldsymbol{\varepsilon}(\boldsymbol{u})),
\end{align*}
and therefore \eqref{SGE-mix1} holds.

Conversely, let $(\boldsymbol{\Phi},\boldsymbol{u})$ solve \eqref{SGE-mix}. Taking $\boldsymbol{\Psi}\in C_0^\infty(\Omega;\mathbb{S}\otimes\mathbb{R}^d)$ in \eqref{SGE-mix1} yields
\begin{equation}\label{eq:grad-epsilon-recovery}
\grad\boldsymbol{\varepsilon}(\boldsymbol{u})
=\iota^{-2}\widetilde{\mathcal A}\boldsymbol{\Phi}
\quad\text{in }L^2(\Omega;\mathbb{S}\otimes\mathbb{R}^d).
\end{equation}
Thus, $\boldsymbol{\varepsilon}(\boldsymbol{u})\in H^1(\Omega;\mathbb{S})$. The identity
\[
\partial_j\partial_k u_i
=\partial_k\varepsilon_{ij}(\boldsymbol{u})
+\partial_j\varepsilon_{ik}(\boldsymbol{u})
-\partial_i\varepsilon_{jk}(\boldsymbol{u})
\]
holds in the sense of distributions and implies $\boldsymbol{u}\in H^2(\Omega;\mathbb{R}^d)$.

Let $\gamma$ denote the usual trace operator. The componentwise normal trace operator
\[
\gamma_n:H(\div,\Omega;\mathbb S\otimes\mathbb R^d)
\longrightarrow H^{-1/2}(\partial\Omega;\mathbb S),
\qquad
\gamma_n\boldsymbol{\Psi}=\boldsymbol{\Psi}\boldsymbol{n},
\]
is continuous and surjective by the standard normal-trace theorem for $H(\div)$; see \cite[Chapter~I, Section~2]{GiraultRaviart1986}. Here $\langle\cdot,\cdot\rangle_{\partial\Omega}$ denotes the duality pairing
\[
H^{-1/2}(\partial\Omega;\mathbb S)
\times H^{1/2}(\partial\Omega;\mathbb S).
\]
Since $\boldsymbol{\varepsilon}(\boldsymbol{u})\in H^1(\Omega;\mathbb S)$, its trace belongs to $H^{1/2}(\partial\Omega;\mathbb S)$. For any $\boldsymbol{\Psi}\in H(\div,\Omega;\mathbb{S}\otimes\mathbb{R}^d)$, equations \eqref{eq:Hdiv-embedding}, \eqref{SGE-mix1}, and \eqref{eq:grad-epsilon-recovery}, followed by Green's formula, imply
\[
0=(\grad\boldsymbol{\varepsilon}(\boldsymbol{u}),\boldsymbol{\Psi})
+(\boldsymbol{\varepsilon}(\boldsymbol{u}),\div\boldsymbol{\Psi})
=\langle\gamma_n\boldsymbol{\Psi},\gamma(\boldsymbol{\varepsilon}(\boldsymbol{u}))\rangle_{\partial\Omega}.
\]
Surjectivity therefore gives $\gamma(\boldsymbol{\varepsilon}(\boldsymbol{u}))=\boldsymbol{0}$ on $\partial\Omega$. Moreover, $\boldsymbol{u}\in H^2(\Omega;\mathbb R^d)$ and $\gamma\boldsymbol{u}=\boldsymbol{0}$. 
% Hence, on every open boundary face, the tangential derivative of its trace vanishes:
% \[
% \gamma(\partial_{\boldsymbol t}\boldsymbol{u})
% =\partial_{\boldsymbol t}(\gamma\boldsymbol{u})=\boldsymbol{0}.
% \]
% For every tangential vector $\boldsymbol{t}$,
% \[
% \boldsymbol{t}\cdot\gamma(\partial_n\boldsymbol{u})
% =2\boldsymbol{t}^{\intercal}\gamma(\boldsymbol{\varepsilon}(\boldsymbol{u}))\boldsymbol{n}
% -\boldsymbol{n}\cdot\gamma(\partial_{\boldsymbol{t}}\boldsymbol{u})=0,
% \qquad
% \boldsymbol{n}\cdot\gamma(\partial_n\boldsymbol{u})
% =\boldsymbol{n}^{\intercal}\gamma(\boldsymbol{\varepsilon}(\boldsymbol{u}))\boldsymbol{n}=0.
% \]
% Therefore, $\gamma(\partial_n\boldsymbol{u})=\boldsymbol{0}$ and thus $\gamma(\grad\boldsymbol{u})=\boldsymbol{0}$ on $\partial\Omega$. Applying the standard scalar trace characterization componentwise on the bounded polytopal domain $\Omega$ gives
% \[
% H_0^2(\Omega;\mathbb R^d)
% =\{\boldsymbol v\in H^2(\Omega;\mathbb R^d):
% \gamma\boldsymbol v=\boldsymbol 0,\ \gamma(\grad\boldsymbol v)=\boldsymbol 0\}.
% \]
We conclude that $\boldsymbol{u}\in H_0^2(\Omega;\mathbb{R}^d)$.

Finally, the slice-wise definition of $\widetilde{\mathcal A}$ gives
$\widetilde{\mathcal A}\grad\boldsymbol{\sigma}(\boldsymbol{u})
=\grad\boldsymbol{\varepsilon}(\boldsymbol{u})$. Since $\widetilde{\mathcal A}$ is invertible, \eqref{eq:grad-epsilon-recovery} yields
$\boldsymbol{\Phi}=\iota^2\grad\boldsymbol{\sigma}(\boldsymbol{u})$. For every $\boldsymbol{v}\in H_0^2(\Omega;\mathbb R^d)$, equation \eqref{SGE-mix2} gives
\begin{align*}
(\boldsymbol f,\boldsymbol v)
&=\langle\div\div\boldsymbol\Phi,\boldsymbol v\rangle
+(\boldsymbol\sigma(\boldsymbol u),\boldsymbol\varepsilon(\boldsymbol v))\\
&=(\boldsymbol\Phi,\grad\boldsymbol\varepsilon(\boldsymbol v))
+(\boldsymbol\sigma(\boldsymbol u),\boldsymbol\varepsilon(\boldsymbol v))\\
&=\iota^2(\grad\boldsymbol\sigma(\boldsymbol u),\grad\boldsymbol\varepsilon(\boldsymbol v))
+(\boldsymbol\sigma(\boldsymbol u),\boldsymbol\varepsilon(\boldsymbol v)).
\end{align*}
Thus, $\boldsymbol u$ solves \eqref{SGE-weak}.
\end{proof}
%\begin{remark}\rm
%	In contrast to the Hellinger--Reissner formulation of linear elasticity, one
%	cannot obtain a dev-div type estimate that controls the full $L^2$ norm of a
%	third-order tensor in the present setting. Indeed, let
%	$\boldsymbol q\in C_0^\infty(\Omega;\mathbb R^d)$ be a nonzero divergence-free
%	vector field and set
%	\[
%	\boldsymbol\Psi:=\boldsymbol I\otimes\boldsymbol q,
%	\qquad
%	\Psi_{ijk}=\delta_{ij}q_k .
%	\]
%	Then
%	\[
%	\dev\boldsymbol\Psi=0,
%	\qquad
%	\div\boldsymbol\Psi=(\div\boldsymbol q)\boldsymbol I=0,
%	\qquad
%	\div\div\boldsymbol\Psi=0,
%	\]
%	while $\boldsymbol\Psi\neq0$. Hence the full $L^2$ norm of
%	$\boldsymbol\Psi$ cannot be controlled by $\dev\boldsymbol\Psi$ and
%	$\div\div\boldsymbol\Psi$. Therefore, the natural norm for the tensor variable
%	in this formulation is the compliance-weighted norm
%	$\|\cdot\|_{\widetilde{\mathcal A}}$, rather than the $L^2$ norm $\|\cdot\|_0$.
%\end{remark}

\begin{remark}\rm
	By the definition of the compliance operator
	$\widetilde{\mathcal A}$, we have
	\[
	\frac{1}{2\mu+d\lambda}\|\boldsymbol{\Psi}\|_0^2
	\leq
	\|\boldsymbol{\Psi}\|_{\widetilde{\mathcal A}}^2
	\leq
	\frac{1}{2\mu}\|\boldsymbol{\Psi}\|_0^2.
	\]
	Thus, for each fixed $\lambda$,
	$\|\cdot\|_{\widetilde{\mathcal A}}$ is equivalent to the $L^2$ norm $\|\cdot\|_0$. However, this norm equivalence is not uniform with respect to $\lambda$, since the coefficient $(2\mu+d\lambda)^{-1}$ in the lower bound tends to zero as $\lambda\to\infty$. Moreover, adding the $\div\div$ term does not restore uniform control of the $L^2$ norm.
	
	To see this, let
	$\boldsymbol q\in C_0^\infty(\Omega;\mathbb R^d)$ be a nonzero
	divergence-free vector field and set
	\[
	\boldsymbol{\Psi}:=\boldsymbol I\otimes\boldsymbol q.
	\]
	Then
	\[
	\dev\boldsymbol{\Psi}=0,
	\qquad
	\div\boldsymbol{\Psi}
	=(\div\boldsymbol q)\boldsymbol I=0,
	\qquad
	\div\div\boldsymbol{\Psi}=0,
	\]
	while $\|\boldsymbol{\Psi}\|_0>0$. For each fixed $\iota$, $\|\boldsymbol{\Psi}\|_{H_{\mathcal A}^{-1}(\iota\div\div)}=\iota^{-1}\sqrt{\frac{d}{2\mu+d\lambda}}\|\boldsymbol{q}\|_0$ tends to zero as $\lambda\to\infty$. Thus, the parameter-weighted graph norm does not uniformly control the unweighted $L^2$ norm as $\lambda\to\infty$. Consequently, the stability estimate above cannot be converted, merely by norm equivalence, into a $\lambda$-uniform $L^2$ estimate for the tensor variable. This motivates measuring the tensor variable in the compliance-weighted norm.
\end{remark}

\subsection{Uniform regularity assumption}
The reduced elasticity problem associated with the limit $\iota\to0$ is obtained by formally setting $\iota=0$ in \eqref{SGE0}. Its weak formulation is: find $\boldsymbol{u}_0\in H_0^1(\Omega;\mathbb R^d)$ such that
\begin{equation}\label{SGElinear-weak}
(\boldsymbol{\sigma}(\boldsymbol{u}_0),\boldsymbol{\varepsilon}(\boldsymbol{v}))
=(\boldsymbol{f},\boldsymbol{v})
\qquad\forall~\boldsymbol{v}\in H_0^1(\Omega;\mathbb R^d).
\end{equation}
For sufficiently regular $\boldsymbol{u}_0$, this weak problem corresponds to the strong form
\begin{equation}\label{SGElinear}
\begin{cases}
-2\mu\div(\boldsymbol{\varepsilon}(\boldsymbol{u}_0))-\lambda \nabla\div\boldsymbol{u}_0=\boldsymbol{f} &\mbox{in} \ \Omega,\\
\boldsymbol{u}_0=0 &\mbox{on} \ \partial\Omega.
\end{cases}
\end{equation}
% \begin{equation}\label{SGElinear}
% 	\begin{cases}
% 		\mathcal{A}\boldsymbol{\sigma} = \boldsymbol{\varepsilon}(\boldsymbol{u})&\mbox{in} \ \Omega,\\
% 		-\div\boldsymbol{\sigma}=\boldsymbol{f} &\mbox{in} \ \Omega,\\
% 		\boldsymbol{u}=\boldsymbol{0} &\mbox{on} \ \partial\Omega,
% 	\end{cases}
% \end{equation}
\begin{assumption}[Uniform regularity]\label{ass:uniform-regularity}
Let $\boldsymbol{u}$ be the weak solution of \eqref{SGE-weak}, equivalently of \eqref{SGE0}, and let $\boldsymbol{u}_0\in H_0^1(\Omega;\mathbb R^d)$ solve \eqref{SGElinear-weak}. We assume that
\begin{equation}\label{elasregularity}
\|\boldsymbol{u}_0\|_2+\lambda\|\div\boldsymbol{u}_0\|_{1}\lesssim\|\boldsymbol f\|_{0},
\end{equation}
and
\begin{align}
	\label{Regularity-u}
	|\boldsymbol{u}-\boldsymbol{u}_0|_1+\iota\|\boldsymbol{u}\|_2+\iota^2\|\boldsymbol{u}\|_3&\lesssim\iota^{1/2}\|\boldsymbol{f}\|_0,\\
	\label{Regularity-divu}
	\lambda\|\div(\boldsymbol{u}-\boldsymbol{u}_0)\|_0+\lambda\iota|\div\boldsymbol{u}|_1+\lambda\iota^2\|\div\boldsymbol{u}\|_2&\lesssim\iota^{1/2}\|\boldsymbol{f}\|_0.
\end{align}
The constants are independent of $\iota\in(0,1]$ and $\lambda$.
\end{assumption}

The finite element construction and the abstract error analysis below are valid in arbitrary dimensions $d\geq2$, provided Assumption~\ref{ass:uniform-regularity} holds. The currently available verification of this assumption is summarized next.

For convex domains in two and three dimensions, estimate \eqref{elasregularity} follows from the regularity theory in \cite{BrennerSung1992,MR2641539}. The estimates \eqref{Regularity-u}--\eqref{Regularity-divu} were established in \cite{ChenHuangHuang2023} under Assumptions~3.1--3.2 therein. In that reference, Assumption~3.1 is verified in two dimensions for convex polygons, whereas Assumption~3.2 is known for smooth boundaries and requires a separate operator-pencil analysis on polytopal domains.

%%%%%%%%%%%%%%%%%%%%%%%%%%%%%%%%%%%%%%%%
%% FEM
%%%%%%%%%%%%%%%%%%%%%%%%%%%%%%%%%%%%%%%%
\section{A third-order tensor mixed finite element method with linear elements}\label{sec3}
In this section, we develop and analyze a mixed finite element method for the SGE model~\eqref{SGE1}. The method combines an $H(\div)$-conforming finite element space for the third-order double stress tensor with the linear Crouzeix--Raviart space for the displacement. We establish parameter-robust discrete stability and derive both optimal-order and parameter-uniform a priori error estimates. A local post-processing is also presented.

% \subsection{Finite elements and interpolation operators}
\subsection{Finite element spaces}

For third-order tensors, the divergence operator acts exclusively on the last index. This structural property facilitates a natural extension of vector-valued $H(\div)$-conforming finite elements to third-order tensor-valued $H(\div)$-conforming finite elements.
Based on this insight, we adopt the lowest-order $H(\div)$-conforming finite elements for third-order tensors to discretize the double stress tensor $\boldsymbol{\Phi}$.
We take the local shape function space to be
$$
\Sigma(T;\mathbb{S}\otimes\mathbb{R}^d):=\mathbb{P}_0(T;\mathbb{S}\otimes\mathbb{R}^d)+\mathbb{P}_{0}(T;\mathbb{S})\otimes\boldsymbol{x}.
$$
The degrees of freedom (DoFs) are given by
\begin{equation}
\label{tensor-dof}
	(\boldsymbol{\Psi}\boldsymbol{n}, \boldsymbol{q})_F,  \quad\boldsymbol{q}\in \mathbb{P}_0(F;\mathbb{S}), \, F\in\mathcal{F}(T).
\end{equation}
This space--DoF pair is the analogue, for third-order tensors, of the classical Raviart--Thomas (RT) element~\cite{RaviartThomas1977,Nedelec1980}; thus, the space $\Sigma(T;\mathbb{S}\otimes\mathbb{R}^d)$ is uniquely determined by the DoFs in \eqref{tensor-dof}.
%Following the tangential-normal decomposition in~\cite{ChenHuang2024Hdiv}, the face DoFs in \eqref{tensor-dof} can be decomposed as follows.
%Let
%$F\in\mathcal F(T)$ be a $(d-1)$-dimensional face, and let
%$\{\boldsymbol t_i^F\}_{i=1}^{d-1}$ be a basis of the tangent space of $F$.
%Define
%\begin{align*}
%	\mathscr T^F(\mathbb S)
%	&:=
%	\operatorname{span}
%	\bigl\{
%	\operatorname{sym}(\boldsymbol t_i^F\otimes \boldsymbol t_j^F):
%	1\leq i\leq j\leq d-1
%	\bigr\},\\
%	\mathscr N^F(\mathbb S)
%	&:=
%	\operatorname{span}
%	\bigl\{
%	\operatorname{sym}(\boldsymbol t_i^F\otimes \boldsymbol n_F):
%	1\leq i\leq d-1
%	\bigr\}
%	\oplus
%	\operatorname{span}\{\boldsymbol n_F\otimes \boldsymbol n_F\}.
%\end{align*}
%Then
%\[
%\mathbb S
%=
%\mathscr T^F(\mathbb S)
%\oplus
%\mathscr N^F(\mathbb S).
%\]
%Accordingly, the DoFs in \eqref{tensor-dof} are split into
%\begin{align}
%	\label{tensor-dof-t}
%	(\boldsymbol{\Psi}\boldsymbol{n}, \boldsymbol{q})_F,
%	&\quad
%	\boldsymbol{q}\in \mathbb{P}_0(F;\mathscr T^F(\mathbb S)),
%	\, F\in\mathcal{F}(T),\\
%	\label{tensor-dof-n}
%	(\boldsymbol{\Psi}\boldsymbol{n}, \boldsymbol{q})_F,
%	&\quad
%	\boldsymbol{q}\in \mathbb{P}_0(F;\mathscr N^F(\mathbb S)),
%	\, F\in\mathcal{F}(T).
%\end{align}

Given a basis $\{\boldsymbol{\varsigma}_j, 1\leq j\leq d(d+1)/2\}$ of $\mathbb{S}$, a convenient basis of $\Sigma(T;\mathbb{S}\otimes\mathbb{R}^d)$ associated with the faces is
\begin{equation}
\label{tensor-basis}
\{\boldsymbol{\varsigma}_j\otimes(\texttt{v}_i-\boldsymbol{x})(\boldsymbol{n}_{F_i}\cdot\nabla\lambda_i), \quad 0\leq i\leq d, \; 1\leq j\leq d(d+1)/2\}.
\end{equation}
%If, for the face $F_i$, a basis of $\mathbb S$ and its dual basis are chosen
%according to
%\[
%\mathbb S
%=
%\mathscr T^{F_i}(\mathbb S)
%\oplus
%\mathscr N^{F_i}(\mathbb S),
%\]
%then the basis functions in \eqref{tensor-basis} are correspondingly dual to
%the decomposed DoFs \eqref{tensor-dof-t}--\eqref{tensor-dof-n}.

The global finite element space for tensors is defined as
\begin{align*}
	\Sigma_{h}:=\{\boldsymbol{\Psi}\in L^{2}(\Omega;\mathbb{S}\otimes\mathbb{R}^d)&:\boldsymbol{\Psi}|_T\in \Sigma(T;\mathbb{S}\otimes\mathbb{R}^d)~\textrm{for}~\textrm{each}~T\in\mathcal{T}_h, \\
	&\qquad\qquad \textrm{the~DoFs in \eqref{tensor-dof} are single-valued}\}. 
\end{align*}
Clearly, $\Sigma_{h}\subset H(\div,\Omega;\mathbb{S}\otimes\mathbb{R}^d)$.

%\begin{remark}\rm
%	The third-order tensor finite element space can be naturally modified for the homogeneous
%	clamped--simply supported boundary conditions discussed in Remark~\ref{remark-mixbd}.
%	Let
%	\[
%	\mathcal F_h^{\rm ss}
%	:=
%	\{F\in\mathcal F_h^\partial:\ F\subset\Gamma_{\rm ss}\}.
%	\]
%	We define
%	\[
%	\Sigma_{h,{\rm ss}}
%	:=
%	\{
%	\boldsymbol\Psi_h\in\Sigma_h:
%	\textrm{the~DoF~}\eqref{tensor-dof-n}~\textrm{vanishes~on~each}~F\in\mathcal F_h^{\rm ss}
%	\}.
%	\]
%	Hence
%	\[
%	\Sigma_{h,{\rm ss}}
%	\subset
%	H_{\rm ss}(\div,\Omega;\mathbb S\otimes\mathbb R^d).
%	\]
%	For the homogeneous clamped--simply supported boundary condition, one uses
%	$\Sigma_{h,{\rm ss}}$ as the tensor finite element space in the mixed method
%	below. The stability and error analysis in this paper is restricted to the
%	homogeneous fully clamped case.
%\end{remark}

To discretize the displacement field $\boldsymbol{u}$, we employ the vector-valued nonconforming linear element introduced in~\cite{CR1973,Brenner2015}. The local shape function space is given by $\mathbb{P}_1(T;\mathbb{R}^d)$, and the associated DoFs are defined by
\begin{equation}
	\label{CR-dof}
	(\boldsymbol{v},\boldsymbol{q})_F, \quad\boldsymbol{q}\in \mathbb{P}_0(F;\mathbb{R}^d), \, F\in\mathcal{F}(T).
\end{equation}
The global vector-valued nonconforming linear element space is defined as
\begin{align*}
	V_{h} &:= \{\boldsymbol{v}\in L^2(\Omega;\mathbb{R}^d):\boldsymbol{v}|_T\in\mathbb{P}_1(T;\mathbb{R}^d) \textrm{ for each }T\in\mathcal{T}_h; \textrm{ DoF \eqref{CR-dof} is } \\
	&\qquad\qquad\;\;\;\;\textrm{ single-valued across each face in $\mathring{\mathcal{F}}_h$, and vanishes on $\partial\Omega$}\}.
\end{align*}
Functions in $V_{h}$ satisfy the weak continuity condition
\begin{equation}\label{weak-c}
	([\![\boldsymbol{v}]\!],\boldsymbol{q})_F = 0 \quad \forall \ \boldsymbol{v}\in V_{h}, \boldsymbol{q}\in\mathbb{P}_{0}(F;\mathbb{R}^d), F\in\mathcal{F}_h.
\end{equation}
The weak continuity \eqref{weak-c} yields the discrete Poincar\'{e} inequality \cite{MR1974504}
\begin{equation}\label{CRpoincare}
\|\boldsymbol{v}\|_0^2 + \sum_{F\in\mathcal F_h}h^{-1}_F\|[\![\boldsymbol{v}]\!]\|^2_{0,F}\lesssim |\boldsymbol{v}|^2_{1,h}, \quad \forall \ \boldsymbol{v}\in V_{h}.
\end{equation}

\begin{lemma}
	The following norm equivalence holds
	\begin{equation}
		\label{CRnormeq}
		|\boldsymbol{v}|^2_{1,h}\eqsim\|\boldsymbol{\varepsilon}_h(\boldsymbol{v})\|^2_0+\sum_{F\in\mathcal F_h}h^{-1}_F\|[\![\boldsymbol{v}]\!]\|^2_{0,F}, \quad \forall \ \boldsymbol{v}\in V_{h}.
	\end{equation}
\end{lemma}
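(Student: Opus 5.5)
The upper bound $\|\boldsymbol{\varepsilon}_h(\boldsymbol{v})\|_0^2+\sum_{F}h_F^{-1}\|[\![\boldsymbol{v}]\!]\|_{0,F}^2\lesssim|\boldsymbol{v}|_{1,h}^2$ is immediate. Pointwise $|\boldsymbol{\varepsilon}_h(\boldsymbol{v})|\le|\grad_h\boldsymbol{v}|$, and the jump term is controlled by the discrete Poincar\'e inequality \eqref{CRpoincare}.

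The nontrivial direction is the discrete Korn inequality
\[
|\boldsymbol{v}|_{1,h}^2\lesssim\|\boldsymbol{\varepsilon}_h(\boldsymbol{v})\|_0^2+\sum_{F\in\mathcal F_h}h_F^{-1}\|[\![\boldsymbol{v}]\!]\|_{0,F}^2 .
\]
I would prove it via a conforming companion in the spirit of Brenner's piecewise Korn inequality. Let $E_h\boldsymbol{v}\in H_0^1(\Omega;\mathbb R^d)$ be the vertex-averaging enrichment into the continuous $\mathbb P_1$ Lagrange space. At each interior vertex, average the values of $\boldsymbol{v}|_T$ over the elements containing it; at boundary vertices, set the value to zero.

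The standard estimate for this operator is
\[
\sum_T h_T^{-2}\|\boldsymbol{v}-E_h\boldsymbol{v}\|_{0,T}^2+|\boldsymbol{v}-E_h\boldsymbol{v}|_{1,h}^2\lesssim\sum_F h_F^{-1}\|[\![\boldsymbol{v}]\!]\|_{0,F}^2 .
\]
It holds because the difference at a vertex is bounded by the jumps of $\boldsymbol{v}$ across faces in the vertex patch. For boundary vertices one uses $[\![\boldsymbol{v}]\!]=\boldsymbol{v}$ on $\partial\Omega$. Inverse inequalities and scaling then finish it, since on each face the jump is a linear polynomial.

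Given this bound, I would split
\[
|\boldsymbol{v}|_{1,h}\le|E_h\boldsymbol{v}|_1+|\boldsymbol{v}-E_h\boldsymbol{v}|_{1,h}.
\]
The first Korn inequality for $H_0^1$, cited earlier, gives
\[
|E_h\boldsymbol{v}|_1\lesssim\|\boldsymbol{\varepsilon}(E_h\boldsymbol{v})\|_0\le\|\boldsymbol{\varepsilon}_h(\boldsymbol{v})\|_0+\|\boldsymbol{\varepsilon}_h(\boldsymbol{v}-E_h\boldsymbol{v})\|_0 .
\]
Both remainder terms are bounded by $|\boldsymbol{v}-E_h\boldsymbol{v}|_{1,h}$, which is controlled by the jump term. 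Squaring yields the claim. The constants depend only on shape regularity, $\Omega$ and $d$, so they are independent of $\lambda$ and $\iota$.

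The main obstacle is the enrichment estimate, specifically bounding the vertex discrepancies $\boldsymbol{v}|_{T}(\texttt{v})-\boldsymbol{v}|_{T'}(\texttt{v})$ by face jumps. I would handle this by connecting $T$ and $T'$ through a chain of face-adjacent elements in the vertex patch. On each shared face $F$, the jump is a $\mathbb P_1(F)$ function, and its vertex value is bounded by $h_F^{-(d-1)/2}\|[\![\boldsymbol{v}]\!]\|_{0,F}$ by norm equivalence on the reference face. This is where shape regularity is used, since it bounds the patch cardinality.

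Alternatively, one could simply cite Brenner's result \cite{Brenner2015} (the piecewise $H^1$ Korn inequality, stated with jump seminorms). That requires noting that the rigid-motion term is absent here because of the boundary jumps. Indeed, a piecewise rigid motion with zero jumps everywhere, including boundary faces, vanishes.
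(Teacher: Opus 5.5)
Your proof is correct. Its skeleton matches the paper's: the easy direction uses $|\boldsymbol{\varepsilon}_h(\boldsymbol v)|\le|\grad_h\boldsymbol v|$ together with the discrete Poincar\'e inequality \eqref{CRpoincare}, and the hard direction is a broken Korn inequality.

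The difference is that the paper does not prove the broken Korn inequality. It cites \cite[(1.22)]{MR2047078} and \cite[(34)]{ArnoldBrezziMarini2005} in the form
\[
\|\boldsymbol v\|_{1,h}^2\lesssim\|\boldsymbol{\varepsilon}_h(\boldsymbol v)\|_0^2+\sum_{F\in\mathcal F_h}h_F^{-1}\|[\![\boldsymbol v]\!]\|_{0,F}^2 ,
\]
and then concludes in one line. You instead derive this inequality on $V_h$ from two ingredients: the vertex-averaging enrichment $E_h$ into conforming $\mathbb P_1$ with zero boundary values, and the first Korn inequality on $H_0^1(\Omega;\mathbb R^d)$.

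Your argument is the standard enrichment mechanism behind such piecewise Korn inequalities, specialized to piecewise linears. No projection of the jumps is needed, since they already lie in $\mathbb P_1(F;\mathbb R^d)$. It buys a self-contained proof. It also makes two things transparent:
\begin{itemize}
\item no rigid-motion term appears, because the boundary jumps are included, which is what lets $E_h\boldsymbol v\in H_0^1$;
\item the constant depends only on shape regularity, $\Omega$ and $d$.
\end{itemize}
The paper's citation route is shorter, applies to general piecewise $H^1$ fields, and even controls the full broken norm $\|\boldsymbol v\|_{1,h}$. That extra control is not needed here.

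For the alternative you mention (citing Brenner), the precise statement is the one in \cite[(1.22)]{MR2047078}, which is what the paper uses.
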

\begin{proof}
By the broken Korn inequality (see \cite[(1.22)]{MR2047078} and \cite[(34)]{ArnoldBrezziMarini2005}), we have
\begin{equation*}
\|\boldsymbol{v}\|^2_{1,h}\lesssim\|\boldsymbol{\varepsilon}_h(\boldsymbol{v})\|^2_0+\sum_{F\in\mathcal F_h}h^{-1}_F\|[\![\boldsymbol{v}]\!]\|^2_{0,F}, \quad \forall \ \boldsymbol{v}\in V_{h}.
\end{equation*}
Combining this with the discrete Poincar\'{e} inequality \eqref{CRpoincare} yields the norm equivalence \eqref{CRnormeq}.
\end{proof}

For $\boldsymbol{v}\in H_0^2(\Omega;\mathbb{R}^d)+V_h$, define the discrete parameter-dependent quantity
$$
{\interleave \boldsymbol{v}\interleave}^2_{\iota,h}:=\iota^2{\interleave\boldsymbol{v}\interleave}^2_{2,h}+{\interleave\boldsymbol{v}\interleave}^2_{1,h},
$$
where the discrete $H^2$ seminorm is defined by
\begin{align*}
{\interleave \boldsymbol{v}\interleave}_{2,h}^2&:=\sum_{F\in\mathcal{F}_h}h_F^{-1}([\![\boldsymbol{\sigma}_h(\boldsymbol{v})]\!],[\![\boldsymbol{\varepsilon}_h(\boldsymbol{v})]\!])_F \\
&\;=\sum_{F\in\mathcal{F}_h}h_F^{-1}(2\mu\|[\![\boldsymbol{\varepsilon}_h(\boldsymbol{v})]\!]\|^2_{0,F}+\lambda\|[\![\div_h\boldsymbol{v}]\!]\|^2_{0,F}).
\end{align*}
The parameter-dependent quantity ${\interleave \cdot\interleave}_{\iota,h}$ is a norm on $V_h$.

\subsection{Interpolation operators}

%Recall the Brezzi--Douglas--Marini (BDM) element \cite{ChenHuang2022div,ChenChenHuangWei2024,ChenHuang2024,BrezziDouglasMarini1985,BrezziDouglasDuranFortin1987,nedelec1986new} and the Raviart--Thomas (RT) element \cite{RaviartThomas1977,Nedelec1980}.

An $L^2$-bounded projection operator onto the third-order tensor-valued finite element space $\Sigma_{h}$ is essential for the robust analysis below. To this end,
let $I^{\div}_{h}: L^2(\Omega;\mathbb{S}\otimes\mathbb{R}^d)\rightarrow\Sigma_{h}$ denote the third-order tensor-valued counterpart of the $L^2$-bounded projection operator devised in \cite{ArnoldGuzman2021}.
We have
\begin{align}
	\label{IhdivProp1}
	\qquad\quad\;\, \div(I^{\div}_{h}\boldsymbol{\Psi}) &= Q_{0,h}\div\boldsymbol{\Psi}\quad\;\forall~\boldsymbol{\Psi}\in H(\div,\Omega;\mathbb{S}\otimes\mathbb{R}^d),\\
	\label{IhdivProp2}
\|\boldsymbol{\Psi}-I^{\div}_{h}\boldsymbol{\Psi}\|_{0}&\lesssim h^{s}|\boldsymbol{\Psi}|_{s}\qquad\;\;\;\,\forall~\boldsymbol{\Psi}\in H^{s}(\Omega;\mathbb{S}\otimes\mathbb{R}^d), \ 0\leq s\leq 1.
\end{align}
In particular, $\|I_h^{\div}\boldsymbol{\Psi}\|_0\lesssim\|\boldsymbol{\Psi}\|_0$ for every $\boldsymbol{\Psi}\in L^2(\Omega;\mathbb{S}\otimes\mathbb{R}^d)$.

Let $I_h^{\rm CR}:H_0^1(\Omega;\mathbb{R}^d)\rightarrow V_{h}$ be the canonical interpolation operator associated with the DoF~\eqref{CR-dof}. 
The following properties hold (cf.~\cite{BrennerSung1992,Brenner2015}):
\begin{equation}
\label{IhCRProp1}
\grad_h(I_h^{\rm CR} \boldsymbol{v}) = Q_{0,h}(\grad\boldsymbol{v}) \quad \forall~\boldsymbol{v} \in H_0^1(\Omega; \mathbb{R}^d),
\end{equation}
and for any $T \in \mathcal{T}_h$ and $\boldsymbol{v}\in H^1_0(\Omega;\mathbb{R}^d)\cap H^{s}(\Omega;\mathbb{R}^d)$ with $1 \leq s \leq 2$,
\begin{equation}
\label{IhCRProp2}
\|\boldsymbol{v} - I_h^{\rm CR} \boldsymbol{v} \|_{0,T}+h_T^{1/2}\|\boldsymbol{v}-I^{\rm CR}_h\boldsymbol{v}\|_{0,\partial T} + h_T |\boldsymbol{v} - I_h^{\rm CR} \boldsymbol{v}|_{1,T} \lesssim h_T^s |\boldsymbol{v}|_{s,T},
\end{equation}
\begin{equation}
\label{IhCRProp3}
\|\div(\boldsymbol{v} - I_h^{\rm CR} \boldsymbol{v})\|_{0,T} \lesssim h_T^{s-1} |\div\boldsymbol{v}|_{s-1,T}.
\end{equation}

\begin{lemma}
\label{interpolation-I-lemma2}
Let $\boldsymbol{u}\in H_0^2(\Omega;\mathbb{R}^d)$ be the solution of the SGE problem \eqref{SGE0}.
Assume that the regularity estimates \eqref{elasregularity}--\eqref{Regularity-divu} hold.
%  and the solution $\boldsymbol{u}_0\in H^1_0(\Omega;\mathbb{R}^d)$ of problem \eqref{SGElinear} satisfy the regularity \eqref{elasregularity}. Assume $\boldsymbol{u}\in H^{2}(\Omega;\mathbb{R}^d)$, 
We have
	\begin{align}
		\label{IhCRerror1}
		\interleave\boldsymbol{u}-I^{\rm CR}_h\boldsymbol{u}\interleave_{1,h} & \lesssim  h(|\boldsymbol{u}|_2+\sqrt{\lambda}|\div\boldsymbol{u}|_1), \\ %(\iota h+h^2)|\boldsymbol{\sigma}(\boldsymbol{u})|_2
		\label{IhCRerror2}
		\interleave\boldsymbol{u}-I^{\rm CR}_h\boldsymbol{u}\interleave_{\iota,h} & \lesssim(\iota^{1/2}+h)\|\boldsymbol{f}\|_0.
	\end{align}
\end{lemma}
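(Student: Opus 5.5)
For \eqref{IhCRerror1}, I will bound the three pieces of ${\interleave\boldsymbol{u}-I^{\rm CR}_h\boldsymbol{u}\interleave}_{1,h}^2$ separately, writing $\boldsymbol{e}:=\boldsymbol{u}-I_h^{\rm CR}\boldsymbol{u}$. From \eqref{IhCRProp2} with $s=2$, $2\mu\|\boldsymbol{\varepsilon}_h(\boldsymbol{e})\|_0^2\lesssim|\boldsymbol{e}|_{1,h}^2\lesssim h^2|\boldsymbol{u}|_2^2$. From \eqref{IhCRProp3} with $s=2$, $\lambda\|\div_h\boldsymbol{e}\|_0^2\lesssim\lambda h^2|\div\boldsymbol{u}|_1^2$; I could also use $\div_h I_h^{\rm CR}\boldsymbol{u}=Q_{0,h}\div\boldsymbol{u}$, which follows from \eqref{IhCRProp1}. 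For the jump terms, $[\![\boldsymbol{e}]\!]$ has zero mean on each face, since $\boldsymbol{u}$ is continuous and vanishes on $\partial\Omega$ and the CR DoFs are single valued. So $h_F^{-1}\|[\![\boldsymbol{e}]\!]\|_{0,F}^2\lesssim\sum_{T\supset F}h_T^{-1}\|\boldsymbol{e}\|_{0,\partial T}^2\lesssim h_T^2|\boldsymbol{u}|_{2,T}^2$ by \eqref{IhCRProp2}. Summing the three bounds gives \eqref{IhCRerror1}.

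For \eqref{IhCRerror2}, I will handle the two parts of ${\interleave\cdot\interleave}_{\iota,h}$ differently.

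The ${\interleave\cdot\interleave}_{1,h}$ part cannot be bounded directly by \eqref{IhCRerror1} together with \eqref{Regularity-u}--\eqref{Regularity-divu}, because $h|\boldsymbol{u}|_2\lesssim h\iota^{-1/2}\|\boldsymbol{f}\|_0$ is not good enough. I will therefore split $\boldsymbol{u}=\boldsymbol{u}_0+(\boldsymbol{u}-\boldsymbol{u}_0)$ and use linearity of $I_h^{\rm CR}$:
\begin{itemize}
\item For the smooth part $\boldsymbol{u}_0$, \eqref{IhCRerror1} and \eqref{elasregularity} give $h(|\boldsymbol{u}_0|_2+\sqrt{\lambda}|\div\boldsymbol{u}_0|_1)\lesssim h\|\boldsymbol{f}\|_0$. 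The bound $\sqrt\lambda|\div\boldsymbol{u}_0|_1\lesssim\|\boldsymbol f\|_0$ requires $\lambda\gtrsim1$. If $\lambda\lesssim1$, I will instead use $\lambda|\div\boldsymbol{u}_0|_1^2\lesssim|\boldsymbol{u}_0|_2^2$.
\item For the layer part $\boldsymbol{w}:=\boldsymbol{u}-\boldsymbol{u}_0\in H_0^1$, I will use the $s=1$ versions of \eqref{IhCRProp2}--\eqref{IhCRProp3}, i.e.\ $H^1$-stability of $I_h^{\rm CR}$. This gives $|\boldsymbol{w}-I_h^{\rm CR}\boldsymbol{w}|_{1,h}\lesssim|\boldsymbol{w}|_1$ and $\|\div_h(\boldsymbol{w}-I_h^{\rm CR}\boldsymbol{w})\|_0\lesssim\|\div\boldsymbol{w}\|_0$. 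The jumps are handled as before, using the trace bound with $s=1$.
\end{itemize}
Hence ${\interleave\boldsymbol{w}-I_h^{\rm CR}\boldsymbol{w}\interleave}_{1,h}^2\lesssim|\boldsymbol{w}|_1^2+\lambda\|\div\boldsymbol{w}\|_0^2$. By \eqref{Regularity-u}, $|\boldsymbol{w}|_1\lesssim\iota^{1/2}\|\boldsymbol f\|_0$. By \eqref{Regularity-divu}, $\lambda\|\div\boldsymbol{w}\|_0^2\le\lambda^{-1}(\lambda\|\div\boldsymbol{w}\|_0)^2\lesssim\lambda^{-1}\iota\|\boldsymbol f\|_0^2$, which is fine for $\lambda\gtrsim1$. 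For small $\lambda$, I will use $\|\div\boldsymbol{w}\|_0\lesssim|\boldsymbol{w}|_1$ instead.

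For the part $\iota^2{\interleave\boldsymbol{e}\interleave}_{2,h}^2$, note that $\boldsymbol{u}\in H^2_0$ has no jumps in $\boldsymbol{\varepsilon}(\boldsymbol{u})$ or $\div\boldsymbol{u}$. Hence $[\![\boldsymbol{\varepsilon}_h(\boldsymbol{e})]\!]=[\![\boldsymbol{\varepsilon}(\boldsymbol{u})-Q_{0,h}\boldsymbol{\varepsilon}(\boldsymbol{u})]\!]$, and similarly for $\div$; on boundary faces I use the one-sided value. The trace inequality and projection estimates then give
\[
h_F^{-1}\|[\![\boldsymbol{\varepsilon}_h(\boldsymbol{e})]\!]\|_{0,F}^2\lesssim\sum_{T\supset F}\bigl(h_T^{-2}\|\boldsymbol{\varepsilon}-Q_0\boldsymbol{\varepsilon}\|_{0,T}^2+|\boldsymbol{\varepsilon}|_{1,T}^2\bigr)\lesssim|\boldsymbol{u}|_{2,\omega_F}^2.
\]
So $\iota^2{\interleave\boldsymbol{e}\interleave}_{2,h}^2\lesssim\iota^2(|\boldsymbol{u}|_2^2+\lambda|\div\boldsymbol{u}|_1^2)$. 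By \eqref{Regularity-u}--\eqref{Regularity-divu}, this is $\lesssim\iota\|\boldsymbol f\|_0^2(1+\lambda^{-1})$. For $\lambda\lesssim1$, I will bound $\lambda|\div\boldsymbol u|_1^2\lesssim|\boldsymbol u|_2^2$ directly.

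The main obstacle is obtaining uniformity in $\lambda$ across both regimes (small and large $\lambda$), and realizing that the $1,h$ part needs the splitting into $\boldsymbol{u}_0$ and $\boldsymbol{u}-\boldsymbol{u}_0$. Without the splitting, the estimate yields only $h\iota^{-1/2}$ and not $\iota^{1/2}+h$.
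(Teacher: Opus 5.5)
Your proposal is correct and follows essentially the paper's argument. It obtains \eqref{IhCRerror1} directly from \eqref{IhCRProp2}--\eqref{IhCRProp3}. It splits $\boldsymbol{u}=\boldsymbol{u}_0+(\boldsymbol{u}-\boldsymbol{u}_0)$ for the ${\interleave\cdot\interleave}_{1,h}$ part, using first-order approximation on $\boldsymbol{u}_0$ and $H^1$-stability on the layer part, and it bounds the $\iota^2{\interleave\cdot\interleave}_{2,h}^2$ part through the $Q_{0,h}$ projection error. The only cosmetic difference is how uniformity in $\lambda$ is obtained. The paper avoids your small/large-$\lambda$ case splits by bounding in terms of $\|\boldsymbol{\sigma}(\boldsymbol{u}-\boldsymbol{u}_0)\|_0$ and $h\|\boldsymbol{\sigma}(\boldsymbol{u}_0)\|_1$; equivalently, you could write $\lambda\|\div\boldsymbol{w}\|_0^2\le\|\div\boldsymbol{w}\|_0\,\lambda\|\div\boldsymbol{w}\|_0$.
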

\begin{proof}
The estimate \eqref{IhCRerror1} immediately follows from estimates \eqref{IhCRProp2}--\eqref{IhCRProp3} of $I^{\rm CR}_h$.
% By the trace inequality and \eqref{IhCRProp2},
% \begin{equation}\label{IhCRProp3}
% \|\boldsymbol{u}-I^{\rm CR}_h\boldsymbol{u}\|_{0,\partial T}\lesssim h_T^{-1/2}\|\boldsymbol{u}-I^{\rm CR}_h\boldsymbol{u}\|_{0,T} + h_T^{1/2}|\boldsymbol{u}-I^{\rm CR}_h\boldsymbol{u}|_{1,T}\lesssim h_T^{3/2-j}|\boldsymbol{u}|_{2-j,T},
% \end{equation}
% with $j=0,1$.
Applying estimates \eqref{IhCRProp2}--\eqref{IhCRProp3} again, we obtain
	\begin{align*}
		\interleave\boldsymbol{u}-I^{\rm CR}_h\boldsymbol{u}\interleave_{1,h}&\leq
		\interleave(\boldsymbol{u}-\boldsymbol{u}_0)-I^{\rm CR}_h(\boldsymbol{u}-\boldsymbol{u}_0)\interleave_{1,h} +{\interleave\boldsymbol{u}_0-I^{\rm CR}_h\boldsymbol{u}_0\interleave}_{1,h} \\
		&\lesssim \|\boldsymbol{\sigma}(\boldsymbol{u}-\boldsymbol{u}_0)\|_{0}+h\|\boldsymbol{\sigma}(\boldsymbol{u}_0)\|_1.
	\end{align*}
	Hence,  we get from the last inequality and \eqref{elasregularity}--\eqref{Regularity-divu} that
\begin{equation*}
\interleave\boldsymbol{u}-I^{\rm CR}_h\boldsymbol{u}\interleave_{1,h} \lesssim(\iota^{1/2}+h)\|\boldsymbol{f}\|_0.
\end{equation*}
By \eqref{IhCRProp1}, the error estimate of $Q_{0,h}$, and the regularity estimates \eqref{Regularity-u}--\eqref{Regularity-divu},
\begin{align*}
&\quad\; \iota^2{\interleave \boldsymbol{u}-I^{\rm CR}_h\boldsymbol{u}\interleave}_{2,h}^2 \\
&=\iota^2\sum_{F\in\mathcal{F}_h}h_F^{-1}(2\mu\|[\![\boldsymbol{\varepsilon}_h(\boldsymbol{u}-I^{\rm CR}_h\boldsymbol{u})]\!]\|^2_{0,F}+\lambda\|[\![\div_h(\boldsymbol{u}-I^{\rm CR}_h\boldsymbol{u})]\!]\|^2_{0,F}) \\
&=\iota^2\sum_{F\in\mathcal{F}_h}h_F^{-1}(2\mu\|[\![\boldsymbol{\varepsilon}(\boldsymbol{u})-Q_{0,h}(\boldsymbol{\varepsilon}(\boldsymbol{u}))]\!]\|^2_{0,F}+\lambda\|[\![\div\boldsymbol{u}-Q_{0,h}(\div\boldsymbol{u})]\!]\|^2_{0,F}) \\
&\lesssim \iota^2(2\mu|\boldsymbol{\varepsilon}(\boldsymbol{u})|_1^2 + \lambda|\div\boldsymbol{u}|_1^2)\lesssim \iota\|\boldsymbol{f}\|_0^2.
\end{align*}
Therefore, \eqref{IhCRerror2} holds from the combination of the last two estimates.
\end{proof}
%Then we have the following inf-sup condition.
%\begin{lemma}
%	It holds the discrete inf-sup condition
%	\begin{equation}
%		\label{CRinfsup}
%		\|q\|_0\lesssim \sup_{\boldsymbol{v}\in V_h}\dfrac{(\div_h\boldsymbol{v},q)}{|\boldsymbol{v}|_{1,h}}\quad\forall~ q_h\in\mathring{\mathbb{P}}_{0}(\mathcal{T}_h).
%	\end{equation}
%\end{lemma}
%\begin{proof}
%	For $q_h\in\mathring{\mathbb{P}}_{0}(\mathcal{T}_h)$, according to $\div H_0^1(\Omega;\mathbb{R}^d)=L_0^2(\Omega)$ \cite{CostabelMcIntosh2010}, there exists $\boldsymbol{v}\in H_0^1(\Omega;\mathbb{R}^d)$  satisfying 
%	\begin{equation*}
%		\div\boldsymbol{v} = q_h, \quad \textrm{ and }\quad |\boldsymbol{v}|_1\lesssim\|q_h\|_0.
%	\end{equation*}
%	Take $\boldsymbol{v}_h = I^{\rm CR}_h\boldsymbol{v}$. By \eqref{IhCRProp1} and \eqref{IhCRProp2}, we have \begin{equation*}
%		\div_h\boldsymbol{v}_h = q_h, \quad \textrm{ and }\quad |\boldsymbol{v}_h|_{1,h}\lesssim|\boldsymbol{v}|_1\lesssim\|q_h\|_0,
%	\end{equation*}
%	which implies \eqref{CRinfsup}.
%\end{proof}
\subsection{Mixed tensor finite element method} 
Based on the distributional mixed formulation \eqref{SGE-mix}, we propose the following mixed tensor finite element method for the SGE model \eqref{SGE0}:
find $(\boldsymbol{\Phi}_h, \boldsymbol{u}_h)\in\Sigma_{h}\times V_{h}$ such that
\begin{subequations}\label{SGE-MFEM}
	\begin{align}
		\label{SGE-MFEM1}
		a(\boldsymbol{\Phi}_h,\boldsymbol{\Psi}_h)+b_h(\boldsymbol{\Psi}_h,\boldsymbol{u}_h) &= 0 \qquad\qquad\; \forall \ \boldsymbol{\Psi}_h\in  \Sigma_{h}, \\
		\label{SGE-MFEM2}
		b_h(\boldsymbol{\Phi}_h,\boldsymbol{v}_h)-c_h(\boldsymbol{u}_h,\boldsymbol{v}_h)&= -(\boldsymbol{f}, \boldsymbol{v}_h) \quad \forall \ \boldsymbol{v}_h\in V_{h},
	\end{align}
\end{subequations} 
where the discrete bilinear forms are defined as follows:
\begin{align*}
	 b_h(\boldsymbol{\Psi}_h,\boldsymbol{v}_h)&:=(\div\boldsymbol{\Psi}_h, \boldsymbol{\varepsilon}_h(\boldsymbol{v}_h)), \\	c_h(\boldsymbol{u}_h,\boldsymbol{v}_h)&:=(\boldsymbol{\sigma}_h(\boldsymbol{u}_h),\boldsymbol{\varepsilon}_h(\boldsymbol{v}_h))+\sum_{F\in\mathcal F_h}h^{-1}_F([\![\boldsymbol{u}_h]\!],[\![\boldsymbol{v}_h]\!])_F.
\end{align*}
Recall that $a(\boldsymbol{\Phi}_h,\boldsymbol{\Psi}_h):=\iota^{-2}(\widetilde{\mathcal{A}}\boldsymbol{\Phi}_h,\boldsymbol{\Psi}_h)$.
%Here, the stabilization parameter $\eta > 0$ is an arbitrary fixed constant independent of the mesh size $h$, the size parameter $\iota$, and the Lam\'e coefficient $\lambda$. Unlike the penalty parameter in interior penalty methods \cite{BabuskaZlamal1973,Wheeler1978,Arnold1979,Arnold1982}, $\eta$ is not required to be sufficiently large.
Clearly,
\begin{equation}\label{eq:chcoercivity}
c_h(\boldsymbol{v}_h, \boldsymbol{v}_h) = {\interleave \boldsymbol{v}_h\interleave}^2_{1,h}\qquad \forall \ \boldsymbol{v}_h\in V_{h}.
\end{equation}

\begin{lemma}\label{inf-supH2-lemma}
The following discrete inf-sup condition holds:
\begin{equation}
\label{inf-supH2}
{\interleave \boldsymbol{v}_h\interleave}_{2,h}\lesssim\sup_{\boldsymbol{\Psi}_h\in \Sigma_{h}}\frac{(\div\boldsymbol{\Psi}_h, \boldsymbol{\varepsilon}_h(\boldsymbol{v}_h))}{\|\boldsymbol{\Psi}_h\|_{\widetilde{\mathcal{A}}}} \quad \forall \ \boldsymbol{v}_h\in V_{h}.
\end{equation}
\end{lemma}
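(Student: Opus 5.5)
The plan is to build one explicit $\boldsymbol{\Psi}_h\in\Sigma_h$ from the face jumps of $\boldsymbol{\sigma}_h(\boldsymbol{v}_h)$. For this $\boldsymbol{\Psi}_h$ the numerator in \eqref{inf-supH2} will equal ${\interleave\boldsymbol{v}_h\interleave}_{2,h}^2$ exactly, and $\|\boldsymbol{\Psi}_h\|_{\widetilde{\mathcal A}}\lesssim{\interleave\boldsymbol{v}_h\interleave}_{2,h}$ with a constant independent of $\lambda$.

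First, since $\boldsymbol{v}_h|_T\in\mathbb P_1(T;\mathbb R^d)$, the tensor $\boldsymbol{\varepsilon}_h(\boldsymbol{v}_h)$ is piecewise constant. Integrating by parts elementwise, the volume term vanishes, so for every $\boldsymbol{\Psi}_h\in\Sigma_h$
\[
(\div\boldsymbol{\Psi}_h,\boldsymbol{\varepsilon}_h(\boldsymbol{v}_h))=\sum_{T\in\mathcal T_h}(\boldsymbol{\Psi}_h\boldsymbol{n},\boldsymbol{\varepsilon}_h(\boldsymbol{v}_h))_{\partial T}=\sum_{F\in\mathcal F_h}(\boldsymbol{\Psi}_h\boldsymbol{n}_F,[\![\boldsymbol{\varepsilon}_h(\boldsymbol{v}_h)]\!])_F .
\]
This uses that $\boldsymbol{\Psi}_h\boldsymbol{n}_F$ is single-valued on interior faces. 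Boundary faces enter with the one-sided value, which matches the convention $[\![\cdot]\!]|_F=(\cdot)|_F$ on $\partial\Omega$.

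For RT-type fields, $\boldsymbol{\Psi}_h\boldsymbol{n}_F$ is constant on each face. I therefore define $\boldsymbol{\Psi}_h\in\Sigma_h$ through the DoFs \eqref{tensor-dof} by requiring
\[
\boldsymbol{\Psi}_h\boldsymbol{n}_F|_F=h_F^{-1}[\![\boldsymbol{\sigma}_h(\boldsymbol{v}_h)]\!]|_F\in\mathbb S\qquad\forall F\in\mathcal F_h .
\]
This is admissible because the DoFs are exactly the constant symmetric normal traces. Substituting into the identity above gives
\[
(\div\boldsymbol{\Psi}_h,\boldsymbol{\varepsilon}_h(\boldsymbol{v}_h))=\sum_F h_F^{-1}([\![\boldsymbol{\sigma}_h(\boldsymbol{v}_h)]\!],[\![\boldsymbol{\varepsilon}_h(\boldsymbol{v}_h)]\!])_F={\interleave\boldsymbol{v}_h\interleave}_{2,h}^2 .
\]

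The main obstacle is the $\lambda$-robust bound $\|\boldsymbol{\Psi}_h\|_{\widetilde{\mathcal A}}\lesssim{\interleave\boldsymbol{v}_h\interleave}_{2,h}$. A plain $L^2$ scaling argument would produce $\|[\![\boldsymbol{\sigma}_h]\!]\|$, which contains $\lambda$ without the compensating compliance factor. To avoid this, I use the tensor-product structure. On $T$, the basis \eqref{tensor-basis} lets me write
\[
\boldsymbol{\Psi}_h|_T=\sum_{i=0}^d \boldsymbol{c}_i\otimes\boldsymbol{\phi}_i ,
\]
where $\boldsymbol{\phi}_i=(\texttt v_i-\boldsymbol{x})(\boldsymbol{n}_{F_i}\cdot\nabla\lambda_i)$ are the vector RT basis functions, normalized so that $\boldsymbol{\phi}_i\cdot\boldsymbol{n}_{F_i}=1$ on $F_i$ and $0$ on the other faces. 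The coefficients are then $\boldsymbol{c}_i=\pm h_{F_i}^{-1}[\![\boldsymbol{\sigma}_h(\boldsymbol{v}_h)]\!]|_{F_i}$.

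By \eqref{eq:Atensorprop},
\[
(\widetilde{\mathcal A}\boldsymbol{\Psi}_h,\boldsymbol{\Psi}_h)_T=\sum_{i,k}(\mathcal A\boldsymbol{c}_i:\boldsymbol{c}_k)\,M_{ik},\qquad M_{ik}=(\boldsymbol{\phi}_i,\boldsymbol{\phi}_k)_T .
\]
This is the quadratic form of the Kronecker product $\mathcal A\otimes M$. Since $\mathcal A$ is symmetric positive definite, I bound $\mathcal A\otimes M\le\lambda_{\max}(M)\,(\mathcal A\otimes I)$. Shape regularity gives $\lambda_{\max}(M)\lesssim h_T^d$, so
\[
(\widetilde{\mathcal A}\boldsymbol{\Psi}_h,\boldsymbol{\Psi}_h)_T\lesssim h_T^d\sum_i(\mathcal A\boldsymbol{c}_i,\boldsymbol{c}_i).
\]
Inserting the coefficients and using $\mathcal A\boldsymbol{\sigma}_h(\boldsymbol{v}_h)=\boldsymbol{\varepsilon}_h(\boldsymbol{v}_h)$, together with $|F_i|\eqsim h_T^{d-1}$, yields
\[
\|\boldsymbol{\Psi}_h\|_{\widetilde{\mathcal A},T}^2\lesssim\sum_{F\in\mathcal F(T)}h_F^{-1}([\![\boldsymbol{\sigma}_h(\boldsymbol{v}_h)]\!],[\![\boldsymbol{\varepsilon}_h(\boldsymbol{v}_h)]\!])_F .
\]
The constant depends only on shape regularity, not on $\lambda$.

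Summing over $T$ gives $\|\boldsymbol{\Psi}_h\|_{\widetilde{\mathcal A}}\lesssim{\interleave\boldsymbol{v}_h\interleave}_{2,h}$. Dividing the exact identity for the numerator by this bound proves \eqref{inf-supH2}.
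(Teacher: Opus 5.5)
Your proposal is correct and follows the paper's proof. It uses the same test field $\boldsymbol{\Psi}_h\in\Sigma_h$ with $\boldsymbol{\Psi}_h\boldsymbol{n}_F|_F=h_F^{-1}[\![\boldsymbol{\sigma}_h(\boldsymbol{v}_h)]\!]|_F$, the same elementwise integration by parts making the numerator exactly ${\interleave\boldsymbol{v}_h\interleave}_{2,h}^2$, and the same $\lambda$-robust bound on $\|\boldsymbol{\Psi}_h\|_{\widetilde{\mathcal A}}$ via the basis expansion \eqref{tensor-basis} and \eqref{eq:Atensorprop}. The only cosmetic difference is how the cross terms are absorbed: you use the Kronecker bound $M\otimes\mathcal A\le\lambda_{\max}(M)\,I\otimes\mathcal A$ for the Gram matrix $M$, whereas the paper uses $|\boldsymbol{\phi}_i\cdot\boldsymbol{\phi}_j|\lesssim 1$, Cauchy--Schwarz in the $\mathcal A$-inner product, and $(\sum_i a_i)^2\le(d+1)\sum_i a_i^2$.
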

\begin{proof}
Thanks to DoF \eqref{tensor-dof},
let $\boldsymbol{\Psi}_h\in\Sigma_{h}$ be determined by 
\begin{equation*}
(\boldsymbol{\Psi}_h\boldsymbol{n}_F)|_F=h_F^{-1}[\![\boldsymbol{\sigma}_h(\boldsymbol{v}_h)]\!]|_F \quad\quad \forall \ F\in\mathcal{F}_h.
\end{equation*}
By the explicit expression of the basis functions \eqref{tensor-basis}, we have for each $T\in\mathcal{T}_h$ that
\begin{equation*}
\boldsymbol{\Psi}_h|_T = \sum_{i=0}^dh_{F_i}^{-1}[\![\boldsymbol{\sigma}_h(\boldsymbol{v}_h)]\!]|_{F_i} \otimes(\texttt{v}_i-\boldsymbol{x})(\boldsymbol{n}_{F_i}\cdot\nabla\lambda_i).
\end{equation*}
By \eqref{eq:Atensorprop},
\begin{equation*}
(\widetilde{\mathcal{A}}\boldsymbol{\Psi}_h)|_T = \sum_{i=0}^dh_{F_i}^{-1}[\![\boldsymbol{\varepsilon}_h(\boldsymbol{v}_h)]\!]|_{F_i} \otimes(\texttt{v}_i-\boldsymbol{x})(\boldsymbol{n}_{F_i}\cdot\nabla\lambda_i).
\end{equation*}
Noting that $|(\texttt{v}_i-\boldsymbol{x})(\boldsymbol{n}_{F_i}\cdot\nabla\lambda_i)|\lesssim 1$,
we obtain
\begin{align*}
(\widetilde{\mathcal{A}}\boldsymbol{\Psi}_h,\boldsymbol{\Psi}_h)_T&\lesssim \sum_{i,j=0}^d(h_{F_i}h_{F_j})^{-1}\big|([\![\boldsymbol{\sigma}_h(\boldsymbol{v}_h)]\!]|_{F_i}, [\![\boldsymbol{\varepsilon}_h(\boldsymbol{v}_h)]\!]|_{F_j})_T\big| \\
&\lesssim \sum_{i=0}^dh_{F_i}^{-2}([\![\boldsymbol{\sigma}_h(\boldsymbol{v}_h)]\!]|_{F_i}, [\![\boldsymbol{\varepsilon}_h(\boldsymbol{v}_h)]\!]|_{F_i})_T \\
&\lesssim \sum_{i=0}^dh_{F_i}^{-1}([\![\boldsymbol{\sigma}_h(\boldsymbol{v}_h)]\!], [\![\boldsymbol{\varepsilon}_h(\boldsymbol{v}_h)]\!])_{F_i},
\end{align*}
where the second inequality follows from the Cauchy--Schwarz inequality in the positive-definite $\mathcal C$-inner product, the inequality $(\sum_{i=0}^d a_i)^2\leq(d+1)\sum_{i=0}^d a_i^2$, and the identity $\boldsymbol{\sigma}_h(\boldsymbol{v}_h)=\mathcal C\boldsymbol{\varepsilon}_h(\boldsymbol{v}_h)$. The last inequality uses the scaling $|T|\eqsim h_{F_i}|F_i|$.
This implies
\begin{equation}
\label{inf-supH2-pf2}
\|\boldsymbol{\Psi}_h\|^2_{\widetilde{\mathcal{A}}}=(\widetilde{\mathcal{A}}\boldsymbol{\Psi}_h,\boldsymbol{\Psi}_h)\lesssim\sum_{F\in\mathcal{F}_h}h_F^{-1}([\![\boldsymbol{\sigma}_h(\boldsymbol{v}_h)]\!],[\![\boldsymbol{\varepsilon}_h(\boldsymbol{v}_h)]\!])_F={\interleave \boldsymbol{v}_h\interleave}^2_{2,h}.
\end{equation}
Applying integration by parts, we get
\begin{align*}
(\div\boldsymbol{\Psi}_h, \boldsymbol{\varepsilon}_h(\boldsymbol{v}_h))=\sum_{F\in\mathcal{F}_h}(\boldsymbol{\Psi}_h\boldsymbol{n}_F,[\![\boldsymbol{\varepsilon}_h(\boldsymbol{v}_h)]\!])_F={\interleave \boldsymbol{v}_h\interleave}_{2,h}^2,
\end{align*}
which together with \eqref{inf-supH2-pf2} yields \eqref{inf-supH2}.
\end{proof}

\begin{theorem}\label{wp-theorem}
The following discrete stability estimate holds:
	\begin{align}
		\label{discretestability}
		&\iota^{-1}\|\boldsymbol{\Phi}_h\|_{\widetilde{\mathcal{A}}}+{\interleave \boldsymbol{u}_h\interleave}_{\iota,h} \\
		\notag
		&\quad \lesssim\sup_{\boldsymbol{\Psi}_h\in \Sigma_{h}, \boldsymbol{v}_h\in V_{h}}
		\frac{a(\boldsymbol{\Phi}_h, \boldsymbol{\Psi}_h)+b_h(\boldsymbol{\Psi}_h, \boldsymbol{u}_h)+b_h(\boldsymbol{\Phi}_h, \boldsymbol{v}_h)-c_h(\boldsymbol{u}_h, \boldsymbol{v}_h)}{\iota^{-1}\|\boldsymbol{\Psi}_h\|_{\widetilde{\mathcal{A}}}+{\interleave \boldsymbol{v}_h\interleave}_{\iota,h}}
	\end{align}
	for any $\boldsymbol{\Phi}_h\in\Sigma_{h}$ and $\boldsymbol{u}_h\in V_{h}$.
Then the mixed tensor finite element method \eqref{SGE-MFEM} is well-posed.
\end{theorem}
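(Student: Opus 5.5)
The plan is to apply the discrete version of Zulehner's abstract framework (\cite[Theorem~2.6]{zulehner2011}) on $\Sigma_h\times V_h$. The norms are $\iota^{-1}\|\cdot\|_{\widetilde{\mathcal A}}$ on $\Sigma_h$ and ${\interleave\cdot\interleave}_{\iota,h}$ on $V_h$. The same abstract result was used at the continuous level, so it suffices to verify the two analogous conditions uniformly in $h$, $\iota$ and $\lambda$:
\begin{align*}
a(\boldsymbol\Psi_h,\boldsymbol\Psi_h)+\sup_{\boldsymbol v_h\in V_h}\frac{b_h^2(\boldsymbol\Psi_h,\boldsymbol v_h)}{{\interleave\boldsymbol v_h\interleave}_{\iota,h}^2}&\eqsim \iota^{-2}\|\boldsymbol\Psi_h\|_{\widetilde{\mathcal A}}^2,\\
c_h(\boldsymbol v_h,\boldsymbol v_h)+\sup_{\boldsymbol\Psi_h\in\Sigma_h}\frac{b_h^2(\boldsymbol\Psi_h,\boldsymbol v_h)}{\iota^{-2}\|\boldsymbol\Psi_h\|_{\widetilde{\mathcal A}}^2}&\eqsim {\interleave\boldsymbol v_h\interleave}_{\iota,h}^2.
\end{align*}
Once both hold, the theorem yields the inf-sup estimate \eqref{discretestability}, and well-posedness follows at once because the system is square and finite dimensional.

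\textbf{Continuity of $b_h$.} First I would show
\[
|b_h(\boldsymbol\Psi_h,\boldsymbol v_h)|\lesssim\|\boldsymbol\Psi_h\|_{\widetilde{\mathcal A}}{\interleave\boldsymbol v_h\interleave}_{2,h}.
\]
I would integrate by parts elementwise, using $\Sigma_h\subset H(\div)$ and that $\div\boldsymbol\Psi_h$ is piecewise constant, exactly as at the end of the proof of Lemma~\ref{inf-supH2-lemma}. This gives $b_h(\boldsymbol\Psi_h,\boldsymbol v_h)=\sum_F(\boldsymbol\Psi_h\boldsymbol n_F,[\![\boldsymbol\varepsilon_h(\boldsymbol v_h)]\!])_F$, with no volume term because $\boldsymbol\varepsilon_h(\boldsymbol v_h)$ is piecewise constant.

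To bound each face term uniformly in $\lambda$, I would write it as $(\mathcal A(\boldsymbol\Psi_h\boldsymbol n_F),\mathcal C[\![\boldsymbol\varepsilon_h(\boldsymbol v_h)]\!])_F=(\mathcal A(\boldsymbol\Psi_h\boldsymbol n_F),[\![\boldsymbol\sigma_h(\boldsymbol v_h)]\!])_F$. Cauchy--Schwarz in the $\mathcal A$-inner product then gives
\[
\big((\mathcal A\boldsymbol\Psi_h\boldsymbol n,\boldsymbol\Psi_h\boldsymbol n)_F\big)^{1/2}\big(([\![\boldsymbol\sigma_h]\!],[\![\boldsymbol\varepsilon_h]\!])_F\big)^{1/2}.
\]
By \eqref{eq:Atildetensorprop}, $\mathcal A(\boldsymbol\Psi\boldsymbol n)=(\widetilde{\mathcal A}\boldsymbol\Psi)\boldsymbol n$. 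Since $\Sigma(T)$ is a finite-dimensional space, a scaling and trace inverse argument gives $h_F(\mathcal A\boldsymbol\Psi_h\boldsymbol n,\boldsymbol\Psi_h\boldsymbol n)_F\lesssim(\widetilde{\mathcal A}\boldsymbol\Psi_h,\boldsymbol\Psi_h)_T$. This needs care to stay $\lambda$-uniform: I would apply the inverse inequality separately to the $\dev$ and $\tr$ parts, which are both polynomials in the same RT-type space, since $\dev$ and $\tr$ commute with the contraction by $\boldsymbol n$ and preserve the space. Summing over faces yields the continuity of $b_h$, hence $b_h^2\le\iota^{-2}\|\boldsymbol\Psi_h\|_{\widetilde{\mathcal A}}^2\,\iota^2{\interleave\boldsymbol v_h\interleave}_{2,h}^2$.

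\textbf{First condition.} The lower bound is trivial since $a(\boldsymbol\Psi_h,\boldsymbol\Psi_h)=\iota^{-2}\|\boldsymbol\Psi_h\|_{\widetilde{\mathcal A}}^2$. The upper bound follows from the continuity of $b_h$ just shown, because ${\interleave\boldsymbol v_h\interleave}_{\iota,h}\ge\iota{\interleave\boldsymbol v_h\interleave}_{2,h}$.

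\textbf{Second condition.} The upper bound again follows from the continuity of $b_h$ together with $c_h(\boldsymbol v_h,\boldsymbol v_h)={\interleave\boldsymbol v_h\interleave}_{1,h}^2$ from \eqref{eq:chcoercivity}. For the lower bound, \eqref{eq:chcoercivity} supplies the ${\interleave\cdot\interleave}_{1,h}$ part. Lemma~\ref{inf-supH2-lemma}, squared and multiplied by $\iota^2$, gives
\[
\iota^2{\interleave\boldsymbol v_h\interleave}_{2,h}^2\lesssim\sup_{\boldsymbol\Psi_h}\frac{b_h^2(\boldsymbol\Psi_h,\boldsymbol v_h)}{\iota^{-2}\|\boldsymbol\Psi_h\|_{\widetilde{\mathcal A}}^2}.
\]
Adding the two bounds gives the required equivalence.

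The main obstacle is the $\lambda$-robust continuity of $b_h$, specifically the trace inverse estimate in the weighted $\widetilde{\mathcal A}$ norm. The splitting of $\boldsymbol\Psi_h$ into $\dev$ and $\tr$ parts is what makes this work. Everything else is a direct assembly of Lemma~\ref{inf-supH2-lemma}, \eqref{eq:chcoercivity}, and Zulehner's theorem.
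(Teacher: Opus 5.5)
Your proposal is correct and follows the paper's proof essentially step for step. You prove $\lambda$-robust continuity of $b_h$ by elementwise integration by parts, Cauchy--Schwarz in the $\mathcal A$-inner product, and a trace inverse inequality via \eqref{eq:Atildetensorprop}; you then verify the two norm equivalences (the second from Lemma~\ref{inf-supH2-lemma} and \eqref{eq:chcoercivity}) and apply Zulehner's theorem, exactly as the paper does. Your explicit splitting into $\dev$ and $\tr$ parts only spells out why the inverse inequality in the $\widetilde{\mathcal A}$-weighted norm is uniform in $\lambda$, a point the paper leaves implicit.
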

\begin{proof}
By \eqref{eq:Atildetensorprop} and the inverse inequality, for any $\boldsymbol{\Psi}_h\in \Sigma_{h}$, we have
\begin{align*}
\sum_{F\in\mathcal{F}_h}h_F(\mathcal{A}(\boldsymbol{\Psi}_h\boldsymbol{n}), \boldsymbol{\Psi}_h\boldsymbol{n})_F&=\sum_{F\in\mathcal{F}_h}h_F((\widetilde{\mathcal{A}}\boldsymbol{\Psi}_h)\boldsymbol{n}, \boldsymbol{\Psi}_h\boldsymbol{n})_F \\
& \lesssim \sum_{T\in\mathcal{T}_h}h_T(\widetilde{\mathcal{A}}\boldsymbol{\Psi}_h, \boldsymbol{\Psi}_h)_{\partial T} \lesssim (\widetilde{\mathcal{A}}\boldsymbol{\Psi}_h, \boldsymbol{\Psi}_h).
\end{align*}
Applying integration by parts, for any $\boldsymbol{\Psi}_h\in \Sigma_{h}$ and $\boldsymbol{v}_h\in V_{h}$, we obtain
\begin{align*}
b_h(\boldsymbol{\Psi}_h, \boldsymbol{v}_h) &= \sum_{F\in\mathcal{F}_h}(\boldsymbol{\Psi}_h\boldsymbol{n}_F, [\![\boldsymbol{\varepsilon}_h(\boldsymbol{v}_h)]\!])_F \leq {\interleave \boldsymbol{v}_h\interleave}_{2,h}\bigg(\sum_{F\in\mathcal{F}_h}h_F(\mathcal{A}(\boldsymbol{\Psi}_h\boldsymbol{n}), \boldsymbol{\Psi}_h\boldsymbol{n})_F\bigg)^{1/2} \\
& \lesssim \|\boldsymbol{\Psi}_h\|_{\widetilde{\mathcal{A}}}{\interleave \boldsymbol{v}_h\interleave}_{2,h}\leq \iota^{-1}\|\boldsymbol{\Psi}_h\|_{\widetilde{\mathcal{A}}}{\interleave \boldsymbol{v}_h\interleave}_{\iota,h}.
\end{align*}
This, together with the identity $a(\boldsymbol{\Psi}_h,\boldsymbol{\Psi}_h)=\iota^{-2}\|\boldsymbol{\Psi}_h\|^2_{\widetilde{\mathcal{A}}}$, implies
\begin{equation*}
\iota^{-2}\|\boldsymbol{\Psi}_h\|^2_{\widetilde{\mathcal{A}}}\leq a(\boldsymbol{\Psi}_h, \boldsymbol{\Psi}_h)+ \sup_{\boldsymbol{v}_h\in V_{h}}\frac{b_h^2(\boldsymbol{\Psi}_h, \boldsymbol{v}_h)}{{\interleave \boldsymbol{v}_h\interleave}_{\iota,h}^2} \lesssim\iota^{-2}\|\boldsymbol{\Psi}_h\|^2_{\widetilde{\mathcal{A}}} \qquad \forall \ \boldsymbol{\Psi}_h\in 	\Sigma_h.
\end{equation*}
On the other hand, by the discrete inf-sup condition \eqref{inf-supH2}, we have
\begin{align*}
{\interleave \boldsymbol{v}_h\interleave}^2_{\iota,h}\lesssim c_h(\boldsymbol{v}_h, \boldsymbol{v}_h) + \sup_{\boldsymbol{\Psi}_h\in \Sigma_{h}}\frac{b_h^2(\boldsymbol{\Psi}_h, \boldsymbol{v}_h)}{\iota^{-2}\|\boldsymbol{\Psi}_h\|_{\widetilde{\mathcal{A}}}^2} \lesssim{\interleave \boldsymbol{v}_h\interleave}^2_{\iota,h} \qquad \forall \ \boldsymbol{v}_h\in V_{h}.
\end{align*}
Combining the two norm equivalences above, we apply Zulehner's theory to conclude the discrete stability \eqref{discretestability} and the well-posedness of the mixed finite element method~\eqref{SGE-MFEM}.
\end{proof}

\subsection{Error analysis}

For later use, the identity $\mathcal A\boldsymbol{\sigma}(\boldsymbol{u})=\boldsymbol{\varepsilon}(\boldsymbol{u})$, the formula
\[
\div\boldsymbol{u}=\frac{\tr\boldsymbol{\sigma}(\boldsymbol{u})}{2\mu+d\lambda},
\]
and the distributional identity for the second derivatives of $\boldsymbol{u}$ imply, for $m=1,2$,
\begin{equation}
\label{stress-controls-u}
|\boldsymbol{u}|_{m+1}+\sqrt{\lambda}\,|\div\boldsymbol{u}|_m\lesssim |\boldsymbol{\sigma}(\boldsymbol{u})|_m.
\end{equation}
The constant in \eqref{stress-controls-u} is independent of $\lambda$.

\begin{lemma}
Let $(\boldsymbol{\Phi},\boldsymbol{u})\in H^{-1}(\div\div,\Omega; \mathbb{S}\otimes\mathbb{R}^d)\times H_0^1(\Omega;\mathbb{R}^d)$ be the solution of problem~\eqref{SGE-mix}. If $\boldsymbol{\Phi}\in H^2(\Omega;\mathbb{S}\otimes\mathbb{R}^d)$ and $\boldsymbol{u}\in H^{2}(\Omega;\mathbb{R}^d)$, then the following consistency estimate holds:
\begin{equation}
\label{consistencyerr-1}
b_h(\boldsymbol{\Phi},\boldsymbol{v}_h)-c_h(\boldsymbol{u},\boldsymbol{v}_h)+(\boldsymbol{f}, \boldsymbol{v}_h)\lesssim h(|\div\boldsymbol{\Phi}|_{1}+|\boldsymbol{\sigma}(\boldsymbol{u})|_{1})|\boldsymbol{v}_h|_{1,h}\quad \forall~\boldsymbol{v}_h\in V_{h}.
\end{equation}
\end{lemma}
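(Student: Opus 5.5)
The approach is to use the strong form of the equations to rewrite the left-hand side of \eqref{consistencyerr-1} as a sum of face terms. Each face term is then bounded by pairing the weak continuity \eqref{weak-c} of $V_h$ with the approximation properties of face-mean projections.

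Since $\boldsymbol{\Phi}=\iota^2\grad\boldsymbol{\sigma}(\boldsymbol{u})\in H^2$ and $\boldsymbol{u}\in H^2$, the second equation of \eqref{SGE1} holds in $L^2$:
\[
\boldsymbol{f}=\div\div\boldsymbol{\Phi}-\div\boldsymbol{\sigma}(\boldsymbol{u}).
\]
Moreover, $\boldsymbol{u}\in H_0^1$, so $[\![\boldsymbol{u}]\!]=0$ on every face. Hence the penalty part of $c_h(\boldsymbol{u},\boldsymbol{v}_h)$ vanishes and
\[
c_h(\boldsymbol{u},\boldsymbol{v}_h)=(\boldsymbol{\sigma}(\boldsymbol{u}),\boldsymbol{\varepsilon}_h(\boldsymbol{v}_h)).
\]
Next, integrate by parts elementwise. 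Because $\div\boldsymbol{\Phi}$ and $\boldsymbol{\sigma}(\boldsymbol{u})$ are symmetric, we may replace $\boldsymbol{\varepsilon}_h(\boldsymbol{v}_h)$ by $\grad_h\boldsymbol{v}_h$, which gives
\[
(\div\boldsymbol{\Phi}-\boldsymbol{\sigma}(\boldsymbol{u}),\grad_h\boldsymbol{v}_h)=-(\div\div\boldsymbol{\Phi}-\div\boldsymbol{\sigma}(\boldsymbol{u}),\boldsymbol{v}_h)+\sum_{T}((\div\boldsymbol{\Phi}-\boldsymbol{\sigma}(\boldsymbol{u}))\boldsymbol{n},\boldsymbol{v}_h)_{\partial T}.
\]
The volume term cancels $(\boldsymbol{f},\boldsymbol{v}_h)$. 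The tensor $\boldsymbol{\tau}:=\div\boldsymbol{\Phi}-\boldsymbol{\sigma}(\boldsymbol{u})$ lies in $H^1(\Omega;\mathbb{S})$, so it is single-valued on faces, and the boundary sum collapses to
\[
\sum_{F\in\mathcal{F}_h}(\boldsymbol{\tau}\boldsymbol{n}_F,[\![\boldsymbol{v}_h]\!])_F .
\]
On boundary faces the jump is the trace, which is consistent with the convention $[\![\boldsymbol{v}]\!]|_F=\boldsymbol{v}|_F$.

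To bound each face term, use \eqref{weak-c}: $[\![\boldsymbol{v}_h]\!]$ has zero mean on every $F\in\mathcal{F}_h$, including boundary faces, since the DoFs vanish there. We may therefore subtract a constant and write
\[
(\boldsymbol{\tau}\boldsymbol{n}_F,[\![\boldsymbol{v}_h]\!])_F=((\boldsymbol{\tau}-Q_{0,T}\boldsymbol{\tau})\boldsymbol{n}_F,[\![\boldsymbol{v}_h]\!]-Q_{0,F}[\![\boldsymbol{v}_h]\!])_F ,
\]
with $T$ an element containing $F$. By the trace inequality and the approximation property of $Q_{0,T}$,
\[
\|\boldsymbol{\tau}-Q_{0,T}\boldsymbol{\tau}\|_{0,F}\lesssim h_T^{1/2}|\boldsymbol{\tau}|_{1,T}.
\]
For the jump factor, $[\![\boldsymbol{v}_h]\!]-Q_{0,F}[\![\boldsymbol{v}_h]\!]$ is bounded face by face using the elementwise linear pieces. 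For each neighbor $T^\pm$, subtract its own face mean; since the jump has zero mean, the combination is the difference of the two zero-mean pieces. On a linear polynomial, the trace and Poincaré inequalities on $F$ then give
\[
\|\boldsymbol{v}-Q_{0,F}\boldsymbol{v}\|_{0,F}\lesssim h_F^{1/2}|\boldsymbol{v}|_{1,T}.
\]
Thus each face term is $\lesssim h|\boldsymbol{\tau}|_{1,T}\,|\boldsymbol{v}_h|_{1,T^+\cup T^-}$. Summing over faces with Cauchy--Schwarz and finite overlap gives the bound $h|\boldsymbol{\tau}|_1|\boldsymbol{v}_h|_{1,h}$. Finally, $|\boldsymbol{\tau}|_1\le|\div\boldsymbol{\Phi}|_1+|\boldsymbol{\sigma}(\boldsymbol{u})|_1$, which yields \eqref{consistencyerr-1}.

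The only delicate point is justifying the elementwise integration by parts and the single-valuedness of $\boldsymbol{\tau}$ on faces. This needs $\div\boldsymbol{\Phi}\in H^1$, which holds since $\boldsymbol{\Phi}\in H^2$, and the strong equation in $L^2$, which holds by the equivalence theorem together with the assumed regularity. The rest is the standard Crouzeix--Raviart consistency argument; in particular no $\lambda$ enters, because the constants involve only traces and the weak continuity.
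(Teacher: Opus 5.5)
Your proposal is correct and follows essentially the same route as the paper: use $\div(\div\boldsymbol{\Phi}-\boldsymbol{\sigma}(\boldsymbol{u}))=\boldsymbol{f}$ and elementwise integration by parts to reduce the left-hand side to $\sum_{F}(\boldsymbol{\tau}\boldsymbol{n}_F,[\![\boldsymbol{v}_h]\!])_F$, subtract a constant via the weak continuity \eqref{weak-c}, and apply the trace inequality. The only minor difference is that you prove the local bound $\|[\![\boldsymbol{v}_h]\!]\|_{0,F}\lesssim h_F^{1/2}|\boldsymbol{v}_h|_{1,T^+\cup T^-}$ by hand, whereas the paper invokes the global jump estimate from \eqref{CRpoincare} (cited there through \eqref{CRnormeq}).
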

\begin{proof}
Set $\boldsymbol{w}=\div\boldsymbol{\Phi}-\boldsymbol{\sigma}(\boldsymbol{u})$ for simplicity. Then the second equation of problem~\eqref{SGE1} becomes $\div\boldsymbol{w}=\boldsymbol{f}$.

For any $\boldsymbol{v}_h\in V_h$, integration by parts and the weak continuity \eqref{weak-c} yield
\begin{align*}
&\quad\; b_h(\boldsymbol{\Phi},\boldsymbol{v}_h)-c_h(\boldsymbol{u},\boldsymbol{v}_h)+(\boldsymbol{f},\boldsymbol{v}_h) \\
&= \sum_{T\in \mathcal{T}_h}\sum_{F\in\mathcal{F}(T)}(\boldsymbol{w}\boldsymbol{n},\boldsymbol{v}_h)_{F} = \sum_{F\in\mathcal{F}_h}(\boldsymbol{w}\boldsymbol{n}-Q_{0,F}(\boldsymbol{w}\boldsymbol{n}), [\![\boldsymbol{v}_h]\!])_{F}.
\end{align*}
From the error estimate of $Q_{0,F}$, the trace inequality and \eqref{CRnormeq}, we get
\begin{align*}
b_h(\boldsymbol{\Phi},\boldsymbol{v}_h)-c_h(\boldsymbol{u},\boldsymbol{v}_h)+(\boldsymbol{f},\boldsymbol{v}_h)\lesssim h|\boldsymbol{w}|_1\bigg(\sum_{F\in \mathcal{F}_h}h_F^{-1}\|[\![\boldsymbol{v}_h]\!]\|_{0,F}^2\bigg)^{1/2}\lesssim h|\boldsymbol{w}|_1|\boldsymbol{v}_h|_{1,h},
\end{align*}
which gives \eqref{consistencyerr-1}.
\end{proof}

\begin{theorem}\label{thm:errresult1}
Let $(\boldsymbol{\Phi},\boldsymbol{u})\in H^{-1}(\div\div,\Omega; \mathbb{S}\otimes\mathbb{R}^d)\times H_0^1(\Omega;\mathbb{R}^d)$ and $(\boldsymbol{\Phi}_h,\boldsymbol{u}_h)\in \Sigma_{h}\times V_{h}$ be the solution of problem \eqref{SGE-mix} and the mixed FEM \eqref{SGE-MFEM}, respectively. If $\boldsymbol{\Phi}\in H^2(\Omega;\mathbb{S}\otimes\mathbb{R}^d)$ and $\boldsymbol{u}\in H^3(\Omega;\mathbb{R}^d)$, then
\begin{align}
\label{errresult1}
&	\iota^{-1}\|\boldsymbol{\Phi}-\boldsymbol{\Phi}_h\|_{\widetilde{\mathcal{A}}}+\iota{\interleave I_h^{\rm CR}\boldsymbol{u}-\boldsymbol{u}_h\interleave}_{2,h}+{\interleave \boldsymbol{u}-\boldsymbol{u}_h\interleave}_{1,h} \\
\notag
&\qquad\qquad\qquad\qquad\qquad\qquad\lesssim h(|\div\boldsymbol{\Phi}|_{1}+\iota|\boldsymbol{\sigma}(\boldsymbol{u})|_{2}+|\boldsymbol{\sigma}(\boldsymbol{u})|_{1}).
\end{align}
\end{theorem}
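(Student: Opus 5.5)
We compare the discrete solution with the interpolants $I_h^{\div}\boldsymbol{\Phi}\in\Sigma_h$ and $I_h^{\rm CR}\boldsymbol{u}\in V_h$, and apply the discrete stability \eqref{discretestability} of Theorem~\ref{wp-theorem} to the discrete errors
\[
\boldsymbol{E}_h:=I_h^{\div}\boldsymbol{\Phi}-\boldsymbol{\Phi}_h,\qquad \boldsymbol{e}_h:=I_h^{\rm CR}\boldsymbol{u}-\boldsymbol{u}_h .
\]
Subtracting \eqref{SGE-MFEM} from the continuous relations, for any $(\boldsymbol{\Psi}_h,\boldsymbol{v}_h)$ the numerator in \eqref{discretestability} with $(\boldsymbol{E}_h,\boldsymbol{e}_h)$ equals
\[
a(I_h^{\div}\boldsymbol{\Phi}-\boldsymbol{\Phi},\boldsymbol{\Psi}_h)+b_h(\boldsymbol{\Psi}_h,I_h^{\rm CR}\boldsymbol{u}-\boldsymbol{u})+b_h(I_h^{\div}\boldsymbol{\Phi}-\boldsymbol{\Phi},\boldsymbol{v}_h)-c_h(I_h^{\rm CR}\boldsymbol{u}-\boldsymbol{u},\boldsymbol{v}_h)+R_1+R_2,
\]
where $R_1:=a(\boldsymbol{\Phi},\boldsymbol{\Psi}_h)+b_h(\boldsymbol{\Psi}_h,\boldsymbol{u})$ and $R_2:=b_h(\boldsymbol{\Phi},\boldsymbol{v}_h)-c_h(\boldsymbol{u},\boldsymbol{v}_h)+(\boldsymbol{f},\boldsymbol{v}_h)$ are consistency terms. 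Since $\Sigma_h\subset H(\div)$ and $\boldsymbol{u}\in H_0^2$, the identity \eqref{eq:Hdiv-embedding} together with \eqref{SGE-mix1} gives $R_1=0$. The term $R_2$ is bounded by \eqref{consistencyerr-1}, and $|\boldsymbol{v}_h|_{1,h}\lesssim{\interleave\boldsymbol{v}_h\interleave}_{1,h}$ by \eqref{CRnormeq}.

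The interpolation terms are handled as follows.
\begin{itemize}
\item The term $b_h(\boldsymbol{\Psi}_h,I_h^{\rm CR}\boldsymbol{u}-\boldsymbol{u})$ vanishes. Indeed, $\div\boldsymbol{\Psi}_h$ is piecewise constant, and \eqref{IhCRProp1} gives $\boldsymbol{\varepsilon}_h(I_h^{\rm CR}\boldsymbol{u})=Q_{0,h}\boldsymbol{\varepsilon}(\boldsymbol{u})$.
\item The term $b_h(I_h^{\div}\boldsymbol{\Phi}-\boldsymbol{\Phi},\boldsymbol{v}_h)$ also vanishes, by \eqref{IhdivProp1} and because $\boldsymbol{\varepsilon}_h(\boldsymbol{v}_h)$ is piecewise constant.
\item For $c_h(I_h^{\rm CR}\boldsymbol{u}-\boldsymbol{u},\boldsymbol{v}_h)$, apply Cauchy--Schwarz in the $\mathcal C$-weighted product, then \eqref{IhCRerror1} and \eqref{stress-controls-u}. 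This bounds it by $h|\boldsymbol{\sigma}(\boldsymbol{u})|_1{\interleave\boldsymbol{v}_h\interleave}_{1,h}$.
\item For $a(I_h^{\div}\boldsymbol{\Phi}-\boldsymbol{\Phi},\boldsymbol{\Psi}_h)$, Cauchy--Schwarz gives $\iota^{-1}\|\boldsymbol{\Phi}-I_h^{\div}\boldsymbol{\Phi}\|_{\widetilde{\mathcal A}}\cdot\iota^{-1}\|\boldsymbol{\Psi}_h\|_{\widetilde{\mathcal A}}$.
\end{itemize}

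The last item is the main obstacle. A naive use of \eqref{IhdivProp2} gives $\iota^{-1}h|\boldsymbol{\Phi}|_1=\iota h|\boldsymbol{\sigma}(\boldsymbol{u})|_2$ only with an unweighted $L^2$ norm. The right-hand side of \eqref{errresult1} contains $\iota|\boldsymbol{\sigma}(\boldsymbol{u})|_2$, which matches, since $\|\cdot\|_{\widetilde{\mathcal A}}\le(2\mu)^{-1/2}\|\cdot\|_0$ uniformly in $\lambda$. So $\iota^{-1}\|\boldsymbol{\Phi}-I_h^{\div}\boldsymbol{\Phi}\|_{\widetilde{\mathcal A}}\lesssim \iota^{-1}h|\iota^2\grad\boldsymbol{\sigma}(\boldsymbol{u})|_1\le \iota h|\boldsymbol{\sigma}(\boldsymbol{u})|_2$, and no $\lambda$-weighted interpolation estimate is needed. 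The same bound controls the interpolation part $\iota^{-1}\|\boldsymbol{\Phi}-I_h^{\div}\boldsymbol{\Phi}\|_{\widetilde{\mathcal A}}$ in the triangle inequality for $\boldsymbol{\Phi}-\boldsymbol{\Phi}_h$.

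Collecting the estimates, \eqref{discretestability} yields
\[
\iota^{-1}\|\boldsymbol{E}_h\|_{\widetilde{\mathcal A}}+{\interleave\boldsymbol{e}_h\interleave}_{\iota,h}\lesssim h\bigl(|\div\boldsymbol{\Phi}|_1+\iota|\boldsymbol{\sigma}(\boldsymbol{u})|_2+|\boldsymbol{\sigma}(\boldsymbol{u})|_1\bigr).
\]
This already contains $\iota{\interleave I_h^{\rm CR}\boldsymbol{u}-\boldsymbol{u}_h\interleave}_{2,h}$. To finish, apply the triangle inequality to $\boldsymbol{\Phi}-\boldsymbol{\Phi}_h$ and to $\boldsymbol{u}-\boldsymbol{u}_h$. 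For the latter, use \eqref{IhCRerror1} with \eqref{stress-controls-u} to bound ${\interleave\boldsymbol{u}-I_h^{\rm CR}\boldsymbol{u}\interleave}_{1,h}\lesssim h|\boldsymbol{\sigma}(\boldsymbol{u})|_1$.
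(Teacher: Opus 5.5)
Your proposal is correct and follows essentially the same route as the paper. You compare with $I_h^{\div}\boldsymbol{\Phi}$ and $I_h^{\rm CR}\boldsymbol{u}$, eliminate both $b_h$ interpolation terms via \eqref{IhdivProp1} and \eqref{IhCRProp1}, bound the $a$-term by \eqref{IhdivProp2} and the residual by \eqref{consistencyerr-1}, and then conclude with \eqref{discretestability} and the triangle inequality. The only cosmetic difference is that you bound $c_h(I_h^{\rm CR}\boldsymbol{u}-\boldsymbol{u},\boldsymbol{v}_h)$ by Cauchy--Schwarz and \eqref{IhCRerror1}, whereas the paper first uses orthogonality to reduce it to the face-jump term; both lead to the bound $h|\boldsymbol{\sigma}(\boldsymbol{u})|_1$.
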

\begin{proof}
Take $\boldsymbol{\Psi}_h\in\Sigma_{h}$ and $\boldsymbol{v}_h\in V_{h}$.
From the equation \eqref{SGE-mix1} and the mixed method \eqref{SGE-MFEM}, we have the error equations
\begin{align}
\label{eq:erreqn1}
a(I_{h}^{\div}\boldsymbol{\Phi}-\boldsymbol{\Phi}_h,\boldsymbol{\Psi}_h)+b_h(\boldsymbol{\Psi}_h,\boldsymbol{u}-\boldsymbol{u}_h)&=a(I_{h}^{\div}\boldsymbol{\Phi}-\boldsymbol{\Phi},\boldsymbol{\Psi}_h), \\
\label{eq:erreqn2}
b_h(\boldsymbol{\Phi}-\boldsymbol{\Phi}_h,\boldsymbol{v}_h)-c_h(\boldsymbol{u}-\boldsymbol{u}_h,\boldsymbol{v}_h) &= b_h(\boldsymbol{\Phi},\boldsymbol{v}_h)-c_h(\boldsymbol{u},\boldsymbol{v}_h)+(\boldsymbol{f}, \boldsymbol{v}_h).
\end{align}

The projection properties \eqref{IhdivProp1} and \eqref{IhCRProp1} give, for every $\boldsymbol{\Psi}_h\in\Sigma_h$ and $\boldsymbol{v}_h\in V_h$,
\[
b_h(\boldsymbol{\Psi}_h,I^{\rm CR}_h\boldsymbol{u}-\boldsymbol{u})=0,\qquad
b_h(I_h^{\div}\boldsymbol{\Phi}-\boldsymbol{\Phi},\boldsymbol{v}_h)=0,
\]
and
\[
(\boldsymbol{\sigma}_h(I^{\rm CR}_h\boldsymbol{u}-\boldsymbol{u}),\boldsymbol{\varepsilon}_h(\boldsymbol{v}_h))=0.
\]
Thus, the jump term displayed below is the only correction in
$c_h(I^{\rm CR}_h\boldsymbol{u}-\boldsymbol{u},\boldsymbol{v}_h)$.
Since $\boldsymbol{\Phi}=\iota^2\grad\boldsymbol{\sigma}(\boldsymbol{u})$, the estimate \eqref{IhdivProp2} gives
\begin{equation}\label{errresult-pf20260630}
	\|I_{h}^{\div}\boldsymbol{\Phi}-\boldsymbol{\Phi}\|_{\widetilde{\mathcal{A}}}\lesssim\|I_{h}^{\div}\boldsymbol{\Phi}-\boldsymbol{\Phi}\|_0\lesssim h|\boldsymbol{\Phi}|_1\lesssim \iota^{2}h|\boldsymbol{\sigma}(\boldsymbol{u})|_{2}.
\end{equation}
Then the error equation~\eqref{eq:erreqn1} becomes
%\begin{align}\label{errresult-pf1}
%&\quad a(I_{h}^{\div}\boldsymbol{\Phi}-\boldsymbol{\Phi}_h,\boldsymbol{\Psi}_h)+b_h(\boldsymbol{\Psi}_h,I^{\rm CR}_h\boldsymbol{u}-\boldsymbol{u}_h)	= a(I_{h}^{\div}\boldsymbol{\Phi}-\boldsymbol{\Phi},\boldsymbol{\Psi}_h) \\
%\notag
%&\leq \iota^{-2}\|I_{h}^{\div}\boldsymbol{\Phi}-\boldsymbol{\Phi}\|_{\widetilde{\mathcal{A}}}\|\boldsymbol{\Psi}_h\|_{\widetilde{\mathcal{A}}}\lesssim \iota^{-2}|\boldsymbol{\Phi}|_1\|\boldsymbol{\Psi}_h\|_{\widetilde{\mathcal{A}}}\lesssim h|\boldsymbol{u}|_3\|\boldsymbol{\Psi}_h\|_{\widetilde{\mathcal{A}}}.
%\end{align}
\begin{equation}\label{errresult-pf1}
	a(I_{h}^{\div}\boldsymbol{\Phi}-\boldsymbol{\Phi}_h,\boldsymbol{\Psi}_h)+b_h(\boldsymbol{\Psi}_h,I^{\rm CR}_h\boldsymbol{u}-\boldsymbol{u}_h)\lesssim \iota h|\boldsymbol{\sigma}(\boldsymbol{u})|_{2}(\iota^{-1}\|\boldsymbol{\Psi}_h\|_{\widetilde{\mathcal{A}}}).
\end{equation}
On the other hand, using \eqref{IhdivProp1}, \eqref{IhCRProp1}--\eqref{IhCRProp2}, and \eqref{eq:erreqn2}, we obtain
\begin{equation}
\label{errresult-pf2}
\begin{aligned}
&\quad\; b_h(I_{h}^{\div}\boldsymbol{\Phi}-\boldsymbol{\Phi}_h,\boldsymbol{v}_h)-c_h(I^{\rm CR}_h\boldsymbol{u}-\boldsymbol{u}_h,\boldsymbol{v}_h) \\
&= b_h(\boldsymbol{\Phi}-\boldsymbol{\Phi}_h,\boldsymbol{v}_h)-c_h(\boldsymbol{u}-\boldsymbol{u}_h,\boldsymbol{v}_h) +\sum_{F\in\mathcal F_h}h^{-1}_F([\![\boldsymbol{u}-I^{\rm CR}_h\boldsymbol{u}]\!],[\![\boldsymbol{v}_h]\!])_F\\
&\lesssim b_h(\boldsymbol{\Phi},\boldsymbol{v}_h)-c_h(\boldsymbol{u},\boldsymbol{v}_h)+(\boldsymbol{f}, \boldsymbol{v}_h) + h|\boldsymbol{u}|_2{\interleave \boldsymbol{v}_h\interleave}_{1,h}.
\end{aligned}
\end{equation}
The combination of \eqref{errresult-pf20260630}--\eqref{errresult-pf2} and the discrete stability \eqref{discretestability} yields
\begin{equation}\label{eq:error0}
\begin{aligned}
&\quad\;\iota^{-1}\|I^{\div}_{h}\boldsymbol{\Phi}-\boldsymbol{\Phi}_h\|_{\widetilde{\mathcal{A}}}+{\interleave I^{\rm CR}_h\boldsymbol{u}-\boldsymbol{u}_h\interleave}_{\iota,h} \\
& \lesssim \iota h|\boldsymbol{\sigma}(\boldsymbol{u})|_{2} + h|\boldsymbol{u}|_2 + \sup_{\boldsymbol{v}_h\in V_{h}}
\frac{b_h(\boldsymbol{\Phi},\boldsymbol{v}_h)-c_h(\boldsymbol{u},\boldsymbol{v}_h)+(\boldsymbol{f}, \boldsymbol{v}_h)}{\interleave \boldsymbol{v}_h\interleave_{\iota,h}}.
\end{aligned}
\end{equation}
Hence, the estimate \eqref{errresult1} follows from the triangle inequality, \eqref{IhCRerror1}, \eqref{consistencyerr-1}, and \eqref{stress-controls-u}.
%By \eqref{IhCRProp1} and \eqref{CRinfsup},
%\begin{equation*}
%	\|\div_h(I^{\rm CR}_h\boldsymbol{u}-\boldsymbol{u}_h)\|_0\lesssim\sup_{\boldsymbol{v}_h\in V_h}\dfrac{(\div_h(I^{\rm CR}_h\boldsymbol{u}-\boldsymbol{u}_h),\div_h\boldsymbol{v}_h)}{|\boldsymbol{v}_h|_{1,h}} = \sup_{\boldsymbol{v}_h\in V_h}\dfrac{(\div_h(\boldsymbol{u}_h-\boldsymbol{u}),\div_h\boldsymbol{v}_h)}{|\boldsymbol{v}_h|_{1,h}}.
%\end{equation*}
%It follows from \eqref{SGE-MFEM}, \eqref{consistencyerr-1} and \eqref{errresult1} that
%\begin{align*}
%	\lambda(\div_h(\boldsymbol{u}_h-\boldsymbol{u}),\div_h\boldsymbol{v}_h) &= 2\mu(\boldsymbol{\varepsilon}_h(\boldsymbol{u}-\boldsymbol{u}_h),\boldsymbol{\varepsilon}_h(\boldsymbol{v}_h))+ (\boldsymbol{f},\boldsymbol{v}_h)+b_h(\boldsymbol{\Phi}_h,\boldsymbol{v}_h)-c_h(\boldsymbol{u},\boldsymbol{v}_h)\\
%	&\lesssim h(\|\boldsymbol{\Phi}\|_{2}+\|\boldsymbol{\sigma}(\boldsymbol{u})\|_{1})|\boldsymbol{v}_h|_{1,h}.
%\end{align*}
%Therefore, \eqref{errresultdiv1} holds from the last two inequalities, the triangle inequality and \eqref{IhCRProp1}.
%Hence, by using the triangle inequality and \eqref{IhdivProp2}, we derive \eqref{errresult1} from \eqref{consistencyerr-1} and \eqref{IhdivProp2}.
\end{proof}

\begin{remark}\rm	
Under the stated regularity assumptions and for fixed parameters, \eqref{errresult1} is first order in $h$. This order is optimal for the lowest-order tensor-valued Raviart--Thomas approximation and the linear Crouzeix--Raviart approximation in their respective natural norms. The term $\iota{\interleave I_h^{\rm CR}\boldsymbol{u}-\boldsymbol{u}_h\interleave}_{2,h}$ possesses the supercloseness property, which motivates the local post-processing below.
\end{remark}

Although \eqref{errresult1} is optimal in $h$ for fixed parameters, its regularity factors may deteriorate as $\iota\to0$ in the presence of boundary layers. We next derive a complementary error estimate that is uniform with respect to both $\iota$ and $\lambda$.
\begin{lemma}
Let $(\boldsymbol{\Phi},\boldsymbol{u})\in H^{-1}(\div\div,\Omega; \mathbb{S}\otimes\mathbb{R}^d)\times H_0^1(\Omega;\mathbb{R}^d)$ be the solution of problem~\eqref{SGE-mix}, and let $\boldsymbol{u}_0\in H^1_0(\Omega;\mathbb{R}^d)$ solve the weak problem \eqref{SGElinear-weak}. Assume that Assumption~\ref{ass:uniform-regularity} holds. Then, for any $\boldsymbol{v}_h\in V_{h}$,
\begin{equation}
\label{consistencyerr-2}
b_h(\boldsymbol{\Phi},\boldsymbol{v}_h)-c_h(\boldsymbol{u},\boldsymbol{v}_h)+(\boldsymbol{f}, \boldsymbol{v}_h)\lesssim (\iota^{1/2}+h)\|\boldsymbol{f}\|_0|\boldsymbol{v}_h|_{1,h}.
\end{equation}
\end{lemma}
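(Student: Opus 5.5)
As in the proof of \eqref{consistencyerr-1}, I would set $\boldsymbol{w}:=\div\boldsymbol{\Phi}-\boldsymbol{\sigma}(\boldsymbol{u})$, so that $\div\boldsymbol{w}=\boldsymbol{f}$ by \eqref{SGE1}. Elementwise integration by parts together with the weak continuity \eqref{weak-c} gives
\[
b_h(\boldsymbol{\Phi},\boldsymbol{v}_h)-c_h(\boldsymbol{u},\boldsymbol{v}_h)+(\boldsymbol{f},\boldsymbol{v}_h)=\sum_{F\in\mathcal F_h}(\boldsymbol{w}\boldsymbol{n}-Q_{0,F}(\boldsymbol{w}\boldsymbol{n}),[\![\boldsymbol{v}_h]\!])_F .
\]
The jump term in $c_h$ drops out because $[\![\boldsymbol u]\!]=0$. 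The difficulty is that $|\boldsymbol{w}|_1$ is not uniformly bounded: in the boundary layer $\iota^2|\boldsymbol{\sigma}(\boldsymbol u)|_2$ behaves only like $\iota^{1/2}\|\boldsymbol f\|_0$ times $h^{-1}$-type factors. So I would split $\boldsymbol{w}=\boldsymbol{w}_0+(\boldsymbol{w}-\boldsymbol{w}_0)$ with $\boldsymbol{w}_0:=-\boldsymbol{\sigma}(\boldsymbol{u}_0)$.

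\textbf{The smooth part.} Since $\div\boldsymbol{w}_0=\boldsymbol f$ as well, I would treat $\boldsymbol{w}_0$ exactly as in \eqref{consistencyerr-1}: the error estimate of $Q_{0,F}$, the trace inequality and \eqref{CRnormeq} give
\[
\sum_F(\boldsymbol{w}_0\boldsymbol n-Q_{0,F}(\boldsymbol{w}_0\boldsymbol n),[\![\boldsymbol v_h]\!])_F\lesssim h|\boldsymbol{\sigma}(\boldsymbol u_0)|_1|\boldsymbol v_h|_{1,h}\lesssim h\|\boldsymbol f\|_0|\boldsymbol v_h|_{1,h}.
\]
The last step uses \eqref{elasregularity}, since $|\boldsymbol{\sigma}(\boldsymbol u_0)|_1\lesssim|\boldsymbol u_0|_2+\lambda|\div\boldsymbol u_0|_1$.

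\textbf{The layer part.} For $\boldsymbol{r}:=\boldsymbol{w}-\boldsymbol{w}_0=\div\boldsymbol\Phi-\boldsymbol\sigma(\boldsymbol u-\boldsymbol u_0)$ we have $\div\boldsymbol r=0$, and I would avoid face estimates of $\boldsymbol r$ and undo the integration by parts instead. On each element $\sum_{F\in\mathcal F(T)}(\boldsymbol r\boldsymbol n,\boldsymbol v_h)_F=(\boldsymbol r,\grad\boldsymbol v_h)_T$, since $\div\boldsymbol r=0$. Hence the $\boldsymbol r$-contribution equals $(\boldsymbol r,\boldsymbol\varepsilon_h(\boldsymbol v_h))$, using the symmetry of $\boldsymbol r$. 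This is bounded by $\|\boldsymbol r\|_0|\boldsymbol v_h|_{1,h}$, so no $Q_{0,F}$ subtraction is needed for this part. It remains to estimate
\[
\|\boldsymbol r\|_0\le\iota^2\|\grad\boldsymbol\sigma(\boldsymbol u)\|_1+\|\boldsymbol\sigma(\boldsymbol u-\boldsymbol u_0)\|_0 .
\]
By \eqref{Regularity-u}--\eqref{Regularity-divu}, the first term is at most $\iota^2(|\boldsymbol u|_3+\lambda|\div\boldsymbol u|_2)\lesssim\iota^{1/2}\|\boldsymbol f\|_0$. For the second term, $2\mu|\boldsymbol u-\boldsymbol u_0|_1+\lambda\|\div(\boldsymbol u-\boldsymbol u_0)\|_0\lesssim\iota^{1/2}\|\boldsymbol f\|_0$.

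The main point to check is that the splitting is legitimate. We need $\boldsymbol r\in H(\div)$ elementwise, with normal traces well defined on faces, and the face-sum identity must hold with the full $\boldsymbol r$ rather than after subtracting $Q_{0,F}$. Since the original sum carries $\boldsymbol w\boldsymbol n-Q_{0,F}(\boldsymbol w\boldsymbol n)$, I would first rewrite it as $\sum_T\sum_{F}(\boldsymbol w\boldsymbol n,\boldsymbol v_h)_F$, which is equivalent because $\boldsymbol w\boldsymbol n$ is single-valued, $Q_{0,F}$ is linear, and \eqref{weak-c} holds. Only then would I split linearly into $\boldsymbol w_0$ and $\boldsymbol r$, applying weak continuity to the $\boldsymbol w_0$ part only. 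The required regularity is that $\boldsymbol u\in H^3$ and $\boldsymbol u_0\in H^2$, which follows from Assumption~\ref{ass:uniform-regularity}. Adding the two bounds yields \eqref{consistencyerr-2}.
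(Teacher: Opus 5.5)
Your proposal is correct and is essentially the paper's argument. The paper splits $\boldsymbol u=(\boldsymbol u-\boldsymbol u_0)+\boldsymbol u_0$ directly at the level of the bilinear forms. It bounds $b_h(\boldsymbol\Phi,\boldsymbol v_h)-c_h(\boldsymbol u-\boldsymbol u_0,\boldsymbol v_h)$ by Cauchy--Schwarz with $\iota^2|\boldsymbol\sigma(\boldsymbol u)|_2+\|\boldsymbol\sigma(\boldsymbol u-\boldsymbol u_0)\|_0$, and it treats $(\boldsymbol f,\boldsymbol v_h)-c_h(\boldsymbol u_0,\boldsymbol v_h)$ by integration by parts, $Q_{0,F}$ and the trace inequality, exactly as you do. Your term $(\boldsymbol r,\boldsymbol\varepsilon_h(\boldsymbol v_h))$ is precisely that first bracket, since the jump term vanishes for $\boldsymbol u-\boldsymbol u_0\in H_0^1$. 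So integrating by parts and then undoing it for $\boldsymbol r$, together with the legitimacy check this requires, is an unnecessary detour.
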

\begin{proof}
It follows from the Cauchy--Schwarz inequality, the norm equivalence \eqref{CRnormeq}, and the regularity estimates~\eqref{Regularity-u}--\eqref{Regularity-divu} that
\begin{equation*}
b_h(\boldsymbol{\Phi},\boldsymbol{v}_h)-c_h(\boldsymbol{u}-\boldsymbol{u}_0,\boldsymbol{v}_h)\lesssim(\iota^2|\boldsymbol{\sigma}(\boldsymbol{u})|_2+\|\boldsymbol{\sigma}(\boldsymbol{u}-\boldsymbol{u}_0)\|_0)|\boldsymbol{v}_h|_{1,h}\lesssim\iota^{1/2}\|\boldsymbol{f}\|_0|\boldsymbol{v}_h|_{1,h}.
\end{equation*}
Since
\begin{equation*}
b_h(\boldsymbol{\Phi},\boldsymbol{v}_h)-c_h(\boldsymbol{u},\boldsymbol{v}_h)+(\boldsymbol{f}, \boldsymbol{v}_h) = b_h(\boldsymbol{\Phi},\boldsymbol{v}_h)-c_h(\boldsymbol{u}-\boldsymbol{u}_0,\boldsymbol{v}_h)-c_h(\boldsymbol{u}_0,\boldsymbol{v}_h)+(\boldsymbol{f}, \boldsymbol{v}_h), 
\end{equation*} 
it suffices to prove
\begin{equation}\label{consistencyerr-2-pf1}
(\boldsymbol{f}, \boldsymbol{v}_h)-c_h(\boldsymbol{u}_0,\boldsymbol{v}_h)\lesssim h\|\boldsymbol{f}\|_{0}|\boldsymbol{v}_h|_{1,h}.
\end{equation}
By \eqref{elasregularity}, $\boldsymbol{u}_0\in H^2(\Omega;\mathbb{R}^d)$. Hence the weak problem \eqref{SGElinear-weak} yields $-\div\boldsymbol{\sigma}(\boldsymbol{u}_0)=\boldsymbol{f}$ in $L^2(\Omega;\mathbb{R}^d)$. Elementwise integration by parts and the weak continuity \eqref{weak-c} give
\begin{align*}
(\boldsymbol{f}, \boldsymbol{v}_h) - c_h(\boldsymbol{u}_0,\boldsymbol{v}_h) =&-\sum_{T\in \mathcal{T}_h}\sum_{F\in\mathcal{F}(T)}(\boldsymbol{\sigma}(\boldsymbol{u}_0)\boldsymbol{n},\boldsymbol{v}_h)_{F}\\
=&-\sum_{F\in\mathcal{F}_h}(\boldsymbol{\sigma}(\boldsymbol{u}_0)\boldsymbol{n}-Q_{0,F}(\boldsymbol{\sigma}(\boldsymbol{u}_0)\boldsymbol{n}),[\![\boldsymbol{v}_h]\!])_{F}.
\end{align*}
It follows from the error estimate of $Q_{0,F}$ and the trace inequality that
\begin{align*}
(\boldsymbol{f}, \boldsymbol{v}_h) - c_h(\boldsymbol{u}_0,\boldsymbol{v}_h)\lesssim h|\boldsymbol{\sigma}(\boldsymbol{u}_0)|_1\bigg(\sum_{F\in \mathcal{F}_h}h_F^{-1}\|[\![\boldsymbol{v}_h]\!]\|_{0,F}^2\bigg)^{1/2},
\end{align*}
which together with \eqref{elasregularity} and \eqref{CRnormeq} implies \eqref{consistencyerr-2-pf1}.
\end{proof}

\begin{theorem}\label{thm:errresult23}
Let $(\boldsymbol{\Phi},\boldsymbol{u})\in H^{-1}(\div\div,\Omega; \mathbb{S}\otimes\mathbb{R}^d)\times H_0^1(\Omega;\mathbb{R}^d)$ and $(\boldsymbol{\Phi}_h,\boldsymbol{u}_h)\in \Sigma_{h}\times V_{h}$ be the solutions of problem \eqref{SGE-mix} and the mixed FEM \eqref{SGE-MFEM}, respectively. Assume that Assumption~\ref{ass:uniform-regularity} holds. Then
\begin{align}
\label{errresult2}
\iota^{-1}\|\boldsymbol{\Phi}-\boldsymbol{\Phi}_h\|_{\widetilde{\mathcal{A}}}+{\interleave \boldsymbol{u}-\boldsymbol{u}_h\interleave}_{\iota,h}&\lesssim (\iota^{1/2}+h)\|\boldsymbol{f}\|_{0},\\
\label{errresult3}
{\interleave \boldsymbol{u}_0-\boldsymbol{u}_h\interleave}_{1,h}&\lesssim (\iota^{1/2}+h)\|\boldsymbol{f}\|_{0}.
\end{align}
\end{theorem}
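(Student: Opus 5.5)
I will reuse the error-equation framework from the proof of Theorem~\ref{thm:errresult1}, but replace every regularity factor by the uniform bounds of Assumption~\ref{ass:uniform-regularity}. The comparison pair is $(I_h^{\div}\boldsymbol{\Phi},I_h^{\rm CR}\boldsymbol{u})$, and the error equations \eqref{eq:erreqn1}--\eqref{eq:erreqn2} are used unchanged. By the projection properties \eqref{IhdivProp1} and \eqref{IhCRProp1}, the coupling terms $b_h(\boldsymbol{\Psi}_h,I_h^{\rm CR}\boldsymbol{u}-\boldsymbol{u})$ and $b_h(I_h^{\div}\boldsymbol{\Phi}-\boldsymbol{\Phi},\boldsymbol{v}_h)$ vanish. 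The elementwise part of $c_h(I_h^{\rm CR}\boldsymbol{u}-\boldsymbol{u},\boldsymbol{v}_h)$ also vanishes. The discrete stability \eqref{discretestability} then gives
\[
\iota^{-1}\|I_h^{\div}\boldsymbol{\Phi}-\boldsymbol{\Phi}_h\|_{\widetilde{\mathcal A}}+{\interleave I_h^{\rm CR}\boldsymbol{u}-\boldsymbol{u}_h\interleave}_{\iota,h}
\lesssim \iota^{-1}\|I_h^{\div}\boldsymbol{\Phi}-\boldsymbol{\Phi}\|_{\widetilde{\mathcal A}}+{\interleave \boldsymbol{u}-I_h^{\rm CR}\boldsymbol{u}\interleave}_{1,h}+\sup_{\boldsymbol{v}_h\in V_h}\frac{b_h(\boldsymbol{\Phi},\boldsymbol{v}_h)-c_h(\boldsymbol{u},\boldsymbol{v}_h)+(\boldsymbol{f},\boldsymbol{v}_h)}{{\interleave\boldsymbol{v}_h\interleave}_{\iota,h}}.
\]
The middle term bounds the leftover jump contribution $\sum_F h_F^{-1}([\![\boldsymbol{u}-I_h^{\rm CR}\boldsymbol{u}]\!],[\![\boldsymbol{v}_h]\!])_F$ by Cauchy--Schwarz.

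Next I bound the three terms on the right.

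\textbf{Consistency term.} By \eqref{consistencyerr-2} and the equivalence \eqref{CRnormeq}, $|\boldsymbol{v}_h|_{1,h}\lesssim{\interleave\boldsymbol{v}_h\interleave}_{1,h}\le{\interleave\boldsymbol{v}_h\interleave}_{\iota,h}$. So this term is $\lesssim(\iota^{1/2}+h)\|\boldsymbol{f}\|_0$.

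\textbf{Interpolation of $\boldsymbol{u}$.} By \eqref{IhCRerror2}, ${\interleave \boldsymbol{u}-I_h^{\rm CR}\boldsymbol{u}\interleave}_{\iota,h}\lesssim(\iota^{1/2}+h)\|\boldsymbol{f}\|_0$. This bounds the middle term and will also serve in the final triangle inequality.

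\textbf{Interpolation of $\boldsymbol{\Phi}$.} This is the delicate step. Using \eqref{IhdivProp2} with $s=1$ as in \eqref{errresult-pf20260630} gives $\iota^{-1}\|I_h^{\div}\boldsymbol{\Phi}-\boldsymbol{\Phi}\|_{\widetilde{\mathcal A}}\lesssim \iota h|\boldsymbol{\sigma}(\boldsymbol{u})|_2$. By \eqref{Regularity-u}--\eqref{Regularity-divu}, $\iota|\boldsymbol{\sigma}(\boldsymbol{u})|_2\lesssim \iota^{-1/2}\|\boldsymbol{f}\|_0$, so this route only yields $h\iota^{-1/2}\|\boldsymbol{f}\|_0$, which is not uniform. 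Instead I use $s=0$, i.e.\ the $L^2$-boundedness of $I_h^{\div}$:
\[
\iota^{-1}\|I_h^{\div}\boldsymbol{\Phi}-\boldsymbol{\Phi}\|_{\widetilde{\mathcal A}}\lesssim \iota^{-1}\|\boldsymbol{\Phi}\|_0=\iota\|\grad\boldsymbol{\sigma}(\boldsymbol{u})\|_0\lesssim \iota(|\boldsymbol{u}|_2+\lambda|\div\boldsymbol{u}|_1)\lesssim\iota^{1/2}\|\boldsymbol{f}\|_0 .
\]
The last step uses \eqref{Regularity-u}--\eqref{Regularity-divu}. The point is that the compliance weighting is not needed here: the plain $L^2$ norm already absorbs $\lambda$ through the $\lambda\iota|\div\boldsymbol{u}|_1$ bound. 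The triangle inequality with $\boldsymbol{\Phi}-I_h^{\div}\boldsymbol{\Phi}$ and $\boldsymbol{u}-I_h^{\rm CR}\boldsymbol{u}$, using these same bounds, then gives \eqref{errresult2}.

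For \eqref{errresult3}, I write $\boldsymbol{u}_0-\boldsymbol{u}_h=(\boldsymbol{u}_0-\boldsymbol{u})+(\boldsymbol{u}-\boldsymbol{u}_h)$. The second piece is bounded by \eqref{errresult2}. The first piece lies in $H_0^1(\Omega;\mathbb{R}^d)$, so its jumps vanish and
\[
{\interleave\boldsymbol{u}-\boldsymbol{u}_0\interleave}_1^2=2\mu\|\boldsymbol{\varepsilon}(\boldsymbol{u}-\boldsymbol{u}_0)\|_0^2+\lambda\|\div(\boldsymbol{u}-\boldsymbol{u}_0)\|_0^2 .
\]
The first term is controlled by $|\boldsymbol{u}-\boldsymbol{u}_0|_1\lesssim\iota^{1/2}\|\boldsymbol{f}\|_0$. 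For the $\lambda$ term I apply \eqref{Regularity-divu} as $\lambda\|\div(\boldsymbol{u}-\boldsymbol{u}_0)\|_0^2\le \lambda^{-1}\big(\lambda\|\div(\boldsymbol{u}-\boldsymbol{u}_0)\|_0\big)^2$. This is fine when $\lambda\gtrsim1$. When $\lambda\lesssim1$, I bound it by $\|\div(\boldsymbol{u}-\boldsymbol{u}_0)\|_0^2\lesssim|\boldsymbol{u}-\boldsymbol{u}_0|_1^2$ instead. Both cases give $\lesssim\iota\|\boldsymbol{f}\|_0^2$, which completes the estimate.
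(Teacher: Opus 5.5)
Your proposal is correct and follows the paper's proof essentially step for step. It uses the same comparison pair $(I_h^{\div}\boldsymbol{\Phi},I_h^{\rm CR}\boldsymbol{u})$ and the same error equations fed into \eqref{discretestability}, and the same $L^2$-boundedness of $I_h^{\div}$ giving $\iota^{-1}\|\boldsymbol{\Phi}-I_h^{\div}\boldsymbol{\Phi}\|_{\widetilde{\mathcal A}}\lesssim\iota^{1/2}\|\boldsymbol{f}\|_0$, exactly as in \eqref{errresult-pf4}. It likewise bounds the remaining terms with \eqref{consistencyerr-2} and \eqref{IhCRerror2}, and your argument for \eqref{errresult3} simply spells out the paper's one-line triangle-inequality step. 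The case split on $\lambda$ there is valid, though it can be avoided by writing $\lambda\|\div(\boldsymbol{u}-\boldsymbol{u}_0)\|_0^2=\big(\lambda\|\div(\boldsymbol{u}-\boldsymbol{u}_0)\|_0\big)\|\div(\boldsymbol{u}-\boldsymbol{u}_0)\|_0$.
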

\begin{proof}
Using the estimate \eqref{IhdivProp2} and the regularity estimates \eqref{Regularity-u}--\eqref{Regularity-divu}, we obtain
\begin{equation}
\label{errresult-pf4}
\|I^{\div}_{h}\boldsymbol{\Phi}-\boldsymbol{\Phi}\|_0\lesssim \|\boldsymbol{\Phi}\|_0\lesssim\iota^2\|\boldsymbol{\sigma}(\boldsymbol{u})\|_1\lesssim \iota^{3/2}\|\boldsymbol{f}\|_0.
\end{equation}
This together with \eqref{eq:erreqn1} yields
\begin{equation*}
a(I_{h}^{\div}\boldsymbol{\Phi}-\boldsymbol{\Phi}_h,\boldsymbol{\Psi}_h)+b_h(\boldsymbol{\Psi}_h,I^{\rm CR}_h\boldsymbol{u}-\boldsymbol{u}_h)\lesssim \iota^{1/2}\|\boldsymbol{f}\|_0(\iota^{-1}\|\boldsymbol{\Psi}_h\|_{\widetilde{\mathcal{A}}}).
\end{equation*}
On the other hand, by \eqref{errresult-pf2}, the weak continuity \eqref{weak-c}, \eqref{consistencyerr-2}, and \eqref{IhCRerror2},
\begin{equation*}
% \label{errresult-pf27}
\begin{aligned}
&\quad b_h(I_{h}^{\div}\boldsymbol{\Phi}-\boldsymbol{\Phi}_h,\boldsymbol{v}_h)-c_h(I^{\rm CR}_h\boldsymbol{u}-\boldsymbol{u}_h,\boldsymbol{v}_h) \\
&\lesssim b_h(\boldsymbol{\Phi},\boldsymbol{v}_h)-c_h(\boldsymbol{u},\boldsymbol{v}_h)+(\boldsymbol{f}, \boldsymbol{v}_h) + {\interleave\boldsymbol{u}-I^{\rm CR}_h\boldsymbol{u}\interleave}_{1,h}{\interleave \boldsymbol{v}_h\interleave}_{1,h} \\
&\lesssim (\iota^{1/2}+h)\|\boldsymbol{f}\|_0{\interleave \boldsymbol{v}_h\interleave}_{1,h}.
\end{aligned}
\end{equation*}
By substituting the last two inequalities into the discrete stability estimate \eqref{discretestability}, we obtain
\begin{equation}\label{errresult-pf5}
\iota^{-1}\|I^{\div}_{h}\boldsymbol{\Phi}-\boldsymbol{\Phi}_h\|_{\widetilde{\mathcal{A}}}+{\interleave I^{\rm CR}_h\boldsymbol{u}-\boldsymbol{u}_h\interleave}_{\iota,h}\lesssim (\iota^{1/2}+h)\|\boldsymbol{f}\|_0.
\end{equation}
The estimate \eqref{errresult2} follows from the triangle inequality, \eqref{errresult-pf4}--\eqref{errresult-pf5} and \eqref{IhCRerror2}.

Finally, the estimate \eqref{errresult3} follows from the triangle inequality, \eqref{errresult2}, and the regularity estimates \eqref{Regularity-u}--\eqref{Regularity-divu}.
\end{proof}

\begin{remark}\rm
The constants in \eqref{errresult2}--\eqref{errresult3} are independent of $h$, $\iota$, and $\lambda$. Thus, these estimates remain uniform in both the singular perturbation limit $\iota\to0$ and the nearly incompressible limit $\lambda\to\infty$. In the boundary-layer regime $\iota^{1/2}\lesssim h$, \eqref{errresult2} yields an $O(h)$ parameter-uniform bound. Moreover, \eqref{errresult3} shows that the error between the discrete solution and the reduced elasticity solution is controlled simultaneously by the perturbation scale $\iota^{1/2}$ and the discretization scale $h$. Hence, \eqref{errresult1} and \eqref{errresult2} furnish complementary optimal-order and parameter-robust error bounds.
\end{remark}

\subsection{Local post-processing}
We next construct a post-processed approximation $\boldsymbol{u}_h^*$ and derive its error estimate in a parameter-dependent broken $H^2$ seminorm.

We introduce the second-order Brezzi--Douglas--Marini element \cite{MR3097958, MR4458899}. The second-order BDM element takes $\mathbb{P}_{2}(T;\mathbb{R}^{d})$ as the shape function space, and the DoFs are chosen as
\begin{align}
\label{BDM-dof1}
(\boldsymbol{v}\cdot \boldsymbol n, q)_F\quad &\forall \ q\in \mathbb{P}_{2}(F), \ F\in\mathcal{F}(T),\\
\label{BDM-dof2}
(\boldsymbol{v}, \boldsymbol{q})_{T} \quad &\forall \ \boldsymbol{q}\in 
%\nabla\mathbb{P}_1(T)\oplus\mathbb{P}_0(T;\mathbb{K})\boldsymbol{x}.
\mathbb{P}_1(T;\mathbb{R}^d)~\textrm{satisfying}~ \boldsymbol{x}\cdot\boldsymbol{q}\in\mathbb{P}_1(T).
\end{align}
Let $I_T^{\rm BDM}:H^1(T;\mathbb{R}^{d})\rightarrow\mathbb{P}_{2}(T;\mathbb{R}^{d})$ be the canonical interpolation operator based on DoFs (\ref{BDM-dof1})--(\ref{BDM-dof2}). From \cite{MR3097958}, it holds
\begin{align}
\label{BDM-div}
\div(I_T^{\rm BDM}\boldsymbol{v})=Q_{1,T}(\div\boldsymbol{v}) \quad\forall \ \boldsymbol{v}\in H^1(T;\mathbb{R}^{d}).
\end{align}
Define a new approximation $\boldsymbol{u}^*_h\in\mathbb{P}_2(\mathcal{T}_h;\mathbb{R}^d)$ to $\boldsymbol{u}$ as a solution of the following problem: for each $T\in\mathcal{T}_h$,
\begin{subequations}\label{SGE-Post}
\begin{align}
\label{SGE-Post1}
(\boldsymbol{u}^*_h,\boldsymbol{q})_T &= (\boldsymbol{u}_h,\boldsymbol{q})_T \quad\quad\quad\quad\;\;\, \forall \ \boldsymbol{q}\in  \mathbb{P}_1(T;\mathbb{R}^d), \\
\label{SGE-Post2}
\iota^2(\grad\boldsymbol{\sigma}(\boldsymbol{u}^*_h),\grad\boldsymbol{\varepsilon}(\boldsymbol{q}))_T &= (\boldsymbol{\Phi}_h,\grad\boldsymbol{\varepsilon}(\boldsymbol{q}))_T \quad \forall \ \boldsymbol{q}\in \mathbb{P}_2(T;\mathbb{R}^d).
\end{align}
\end{subequations}
This local problem is well-posed. 
% Indeed, for
% \[
% d_T(\boldsymbol{p},\boldsymbol{q}):=(\grad\boldsymbol{\sigma}(\boldsymbol{p}),\grad\boldsymbol{\varepsilon}(\boldsymbol{q}))_T,
% \]
% we have
% \[
% d_T(\boldsymbol{p},\boldsymbol{p})=2\mu|\boldsymbol{\varepsilon}(\boldsymbol{p})|_{1,T}^2+\lambda|\div\boldsymbol{p}|_{1,T}^2,
% \qquad \boldsymbol{p}\in\mathbb P_2(T;\mathbb R^d).
% \]
% Hence, $\ker d_T=\mathbb P_1(T;\mathbb R^d)$: the nontrivial implication follows from $\grad\boldsymbol{\varepsilon}(\boldsymbol{p})=0$ and the identity for the second derivatives of $\boldsymbol{p}$. Since $\grad\boldsymbol{\varepsilon}(\boldsymbol{q})=0$ for every $\boldsymbol{q}\in\mathbb P_1(T;\mathbb R^d)$, the right-hand side of \eqref{SGE-Post2} vanishes on $\mathbb P_1(T;\mathbb R^d)$ and thus defines a functional on the quotient. The bilinear form $d_T$ is coercive on $\mathbb P_2(T;\mathbb R^d)/\mathbb P_1(T;\mathbb R^d)$, whereas \eqref{SGE-Post1} uniquely determines the $\mathbb P_1(T;\mathbb R^d)$ component.
\begin{theorem}[Optimal post-processing]
Let $(\boldsymbol{\Phi},\boldsymbol{u})\in H^{-1}(\div\div,\Omega; \mathbb{S}\otimes\mathbb{R}^d)\times H_0^1(\Omega;\mathbb{R}^d)$ and $(\boldsymbol{\Phi}_h,\boldsymbol{u}_h)\in \Sigma_{h}\times V_{h}$ be the solution of problem \eqref{SGE-mix} and the mixed FEM \eqref{SGE-MFEM}, respectively. If $\boldsymbol{\Phi}\in H^2(\Omega;\mathbb{S}\otimes\mathbb{R}^d)$ and $\boldsymbol{u}\in H^3(\Omega;\mathbb{R}^d)$, then
\begin{align}
\label{errresult-post}
\iota(|\boldsymbol{\varepsilon}_h(\boldsymbol{u}-\boldsymbol{u}^*_h)|_{1,h}+\sqrt{\lambda}|\div_h(\boldsymbol{u}-\boldsymbol{u}^*_h)|_{1,h})\lesssim h(|\div\boldsymbol{\Phi}|_{1}+\iota|\boldsymbol{\sigma}(\boldsymbol{u})|_{2}+|\boldsymbol{\sigma}(\boldsymbol{u})|_{1}).
\end{align}
\end{theorem}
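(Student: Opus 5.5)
The plan is to compare $\boldsymbol{u}^*_h$ with a local quadratic approximation of $\boldsymbol{u}$. On each $T$ we use the local energy
\[
d_T(\boldsymbol{p},\boldsymbol{q}):=(\grad\boldsymbol{\sigma}(\boldsymbol{p}),\grad\boldsymbol{\varepsilon}(\boldsymbol{q}))_T,
\qquad
d_T(\boldsymbol{p},\boldsymbol{p})=2\mu|\boldsymbol{\varepsilon}(\boldsymbol{p})|_{1,T}^2+\lambda|\div\boldsymbol{p}|_{1,T}^2 .
\]
The left-hand side of \eqref{errresult-post} is, up to constants, $\iota\big(\sum_T d_T(\boldsymbol{u}-\boldsymbol{u}_h^*,\boldsymbol{u}-\boldsymbol{u}_h^*)\big)^{1/2}$. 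For the comparison function I choose $\boldsymbol{w}_h\in\mathbb P_2(\mathcal T_h;\mathbb R^d)$ by the local problem
\[
(\boldsymbol{w}_h,\boldsymbol{q})_T=(\boldsymbol{u},\boldsymbol{q})_T\ \ \forall\,\boldsymbol{q}\in\mathbb P_1(T;\mathbb R^d),
\qquad
d_T(\boldsymbol{w}_h,\boldsymbol{q})=d_T(\boldsymbol{u},\boldsymbol{q})\ \ \forall\,\boldsymbol{q}\in\mathbb P_2(T;\mathbb R^d).
\]
This is the elliptic projection onto $\mathbb P_2$ with respect to the $d_T$ energy, so it is $d_T$-optimal. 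Comparing with the Lagrange or BDM-type quadratic interpolant, and using \eqref{BDM-div} to keep the $\div$ part under $\lambda$-robust control, gives
\[
\iota\, d_T(\boldsymbol{u}-\boldsymbol{w}_h,\boldsymbol{u}-\boldsymbol{w}_h)^{1/2}\lesssim \iota h_T\big(|\boldsymbol{u}|_{3,T}+\sqrt\lambda|\div\boldsymbol{u}|_{2,T}\big)\lesssim \iota h_T|\boldsymbol{\sigma}(\boldsymbol{u})|_{2,T},
\]
where the last step uses \eqref{stress-controls-u}.

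Next we bound $\boldsymbol{e}_h:=\boldsymbol{w}_h-\boldsymbol{u}^*_h$. Since $\iota^2\grad\boldsymbol{\sigma}(\boldsymbol{u})=\boldsymbol{\Phi}$, subtracting \eqref{SGE-Post2} from the definition of $\boldsymbol{w}_h$ gives, for all $\boldsymbol{q}\in\mathbb P_2(T;\mathbb R^d)$,
\[
\iota^2 d_T(\boldsymbol{e}_h,\boldsymbol{q})=(\boldsymbol{\Phi}-\boldsymbol{\Phi}_h,\grad\boldsymbol{\varepsilon}(\boldsymbol{q}))_T .
\]
Take $\boldsymbol{q}=\boldsymbol{e}_h$ and write $\grad\boldsymbol{\varepsilon}(\boldsymbol{e}_h)=\widetilde{\mathcal A}\grad\boldsymbol{\sigma}(\boldsymbol{e}_h)$. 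The Cauchy--Schwarz inequality in the $\widetilde{\mathcal A}$-inner product then gives
\[
|(\boldsymbol{\Phi}-\boldsymbol{\Phi}_h,\grad\boldsymbol{\varepsilon}(\boldsymbol{e}_h))_T|\le \|\boldsymbol{\Phi}-\boldsymbol{\Phi}_h\|_{\widetilde{\mathcal A},T}\,\|\grad\boldsymbol{\sigma}(\boldsymbol{e}_h)\|_{\widetilde{\mathcal A},T}
=\|\boldsymbol{\Phi}-\boldsymbol{\Phi}_h\|_{\widetilde{\mathcal A},T}\, d_T(\boldsymbol{e}_h,\boldsymbol{e}_h)^{1/2}.
\]
This is exactly why the $\widetilde{\mathcal A}$-weighted norm is used: it is what makes the bound $\lambda$-robust. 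It follows that
\[
\iota\, d_T(\boldsymbol{e}_h,\boldsymbol{e}_h)^{1/2}\le \iota^{-1}\|\boldsymbol{\Phi}-\boldsymbol{\Phi}_h\|_{\widetilde{\mathcal A},T}.
\]
Summing over $T$ and applying Theorem~\ref{thm:errresult1} bounds this part by the right-hand side of \eqref{errresult-post}.

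Finally, the triangle inequality $\boldsymbol{u}-\boldsymbol{u}^*_h=(\boldsymbol{u}-\boldsymbol{w}_h)+\boldsymbol{e}_h$ combines the two bounds and yields \eqref{errresult-post}. Condition \eqref{SGE-Post1} does not enter these seminorm estimates, since it only fixes the $\mathbb P_1$ component, which lies in the kernel of $d_T$.

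The main obstacle is the $\lambda$-robust local approximation estimate for $\boldsymbol{w}_h$. What is needed is a quadratic interpolant whose $\sqrt\lambda\,|\div(\cdot)|_{1,T}$ error is bounded by $\sqrt\lambda\,h_T|\div\boldsymbol{u}|_{2,T}$. I would take $I_T^{\rm BDM}\boldsymbol{u}$ with \eqref{BDM-div}. That gives $|\div(\boldsymbol{u}-I_T^{\rm BDM}\boldsymbol{u})|_{1,T}=|\div\boldsymbol{u}-Q_{1,T}\div\boldsymbol{u}|_{1,T}\lesssim h_T|\div\boldsymbol{u}|_{2,T}$, and the standard bound $|\boldsymbol{u}-I_T^{\rm BDM}\boldsymbol{u}|_{2,T}\lesssim h_T|\boldsymbol{u}|_{3,T}$ handles the $\boldsymbol{\varepsilon}$ part. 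The $d_T$-optimality of $\boldsymbol{w}_h$ then transfers this estimate to $\boldsymbol{w}_h$.
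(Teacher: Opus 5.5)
Your proof is correct and is essentially the paper's proof. It uses the same local error identity from \eqref{SGE-Post2} and $\boldsymbol{\Phi}=\iota^2\grad\boldsymbol{\sigma}(\boldsymbol{u})$, Cauchy--Schwarz in the $\widetilde{\mathcal{A}}$-inner product, the BDM interpolant with \eqref{BDM-div} for $\lambda$-robustness, \eqref{stress-controls-u}, and Theorem~\ref{thm:errresult1}; the only difference is cosmetic: the paper compares $\boldsymbol{u}_h^*$ directly with $I_T^{\rm BDM}\boldsymbol{u}$ and bounds the extra term $\iota^2(\grad\boldsymbol{\sigma}(I_T^{\rm BDM}\boldsymbol{u}-\boldsymbol{u}),\grad\boldsymbol{\varepsilon}(\boldsymbol{w}))_T$ by Cauchy--Schwarz, whereas your $d_T$-elliptic projection removes that term by orthogonality and brings in the BDM estimate through best approximation.
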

\begin{proof}
Set $\boldsymbol{w}=I_T^{\rm BDM}\boldsymbol{u}-\boldsymbol{u}^*_h\in\mathbb{P}_2(T;\mathbb{R}^d)$ for simplicity. It follows from \eqref{SGE1} and \eqref{SGE-Post2} that
\begin{equation*}
\iota^2(\grad\boldsymbol{\sigma}(\boldsymbol{u}-\boldsymbol{u}^*_h),\grad\boldsymbol{\varepsilon}(\boldsymbol{w}))_T=(\boldsymbol{\Phi}-\boldsymbol{\Phi}_h,\grad\boldsymbol{\varepsilon}(\boldsymbol{w}))_T.
\end{equation*}
By the definition of $\boldsymbol{w}$,
\begin{align*}
\iota^2(2\mu|\boldsymbol{\varepsilon}(\boldsymbol{w})|^2_{1,T}+\lambda|\div\boldsymbol{w}|^2_{1,T})&=\iota^2(\grad\boldsymbol{\sigma}(I_T^{\rm BDM}\boldsymbol{u}-\boldsymbol{u}),\grad\boldsymbol{\varepsilon}(\boldsymbol{w}))_T\\
&\quad+(\boldsymbol{\Phi}-\boldsymbol{\Phi}_h,\grad\boldsymbol{\varepsilon}(\boldsymbol{w}))_T,
\end{align*}
By the Cauchy--Schwarz inequality in the $\widetilde{\mathcal{A}}$-inner product,
\begin{align*}
\big|\big(\boldsymbol{\Phi}-\boldsymbol{\Phi}_h,\grad\boldsymbol{\varepsilon}(\boldsymbol{w})\big)_T\big|
&\leq \|\boldsymbol{\Phi}-\boldsymbol{\Phi}_h\|_{\widetilde{\mathcal{A}},T}\,
\|\grad\boldsymbol{\varepsilon}(\boldsymbol{w})\|_{\widetilde{\mathcal{A}}^{-1},T},
\end{align*}
where $\widetilde{\mathcal{A}}^{-1}\grad\boldsymbol{\varepsilon}(\boldsymbol{w})=\grad\boldsymbol{\sigma}(\boldsymbol{w})$. Applying the same inequality to the first term on the right-hand side and using \eqref{BDM-div}, we obtain
\begin{align*}
\iota(|\boldsymbol{\varepsilon}(\boldsymbol{w})|_{1,T}+\sqrt{\lambda}|\div\boldsymbol{w}|_{1,T})&\lesssim \iota(|I_T^{\rm BDM}\boldsymbol{u}-\boldsymbol{u}|_{2,T}+\sqrt{\lambda}|Q_{1,T}\div\boldsymbol{u}-\div\boldsymbol{u}|_{1,T})\\
&\quad+\iota^{-1}\big(\widetilde{\mathcal A}(\boldsymbol{\Phi}-\boldsymbol{\Phi}_h),\boldsymbol{\Phi}-\boldsymbol{\Phi}_h\big)_T^{1/2}.
\end{align*}
The standard approximation estimates
\[
|I_T^{\rm BDM}\boldsymbol{u}-\boldsymbol{u}|_{2,T}\lesssim h_T|\boldsymbol{u}|_{3,T},
\qquad
|Q_{1,T}\div\boldsymbol{u}-\div\boldsymbol{u}|_{1,T}\lesssim h_T|\div\boldsymbol{u}|_{2,T}
\]
together with the triangle inequality, \eqref{errresult1}, \eqref{BDM-div}, and \eqref{stress-controls-u} yield \eqref{errresult-post}.
\end{proof}

\section{Hybridization}\label{sec4}

This section introduces a hybridized formulation of \eqref{SGE-MFEM} by relaxing the normal continuity of the tensor variable and introducing a face unknown. 
% We establish its equivalence with the original method, derive regularity-dependent and parameter-robust error estimates, and obtain a symmetric positive-definite system after local elimination of the tensor variable.

\subsection{Weak gradient and norm equivalence}
To this end, we introduce the broken finite element space
\begin{align*}
\Sigma^{-1}_{h}:=\{\boldsymbol{\Psi}\in L^{2}(\Omega;\mathbb{S}\otimes\mathbb{R}^d):\boldsymbol{\Psi}|_T\in \Sigma(T;\mathbb{S}\otimes\mathbb{R}^d)~\textrm{for}~\textrm{each}~T\in\mathcal{T}_h\},
\end{align*}
and the multiplier space $M_h:=\mathbb{P}_0(\mathcal{T}_h;\mathbb{S})\times\mathbb{P}_0(\mathring{\mathcal{F}_h};\mathbb{S})$. Here and below, functions in $\mathbb{P}_0(\mathring{\mathcal{F}_h};\mathbb{S})$ are extended by zero to boundary faces.

Define the weak gradient operator $\grad_w:M_h\rightarrow\Sigma^{-1}_{h}$ as follows: for $\boldsymbol{\tau}_h=(\boldsymbol{\tau}_0,\boldsymbol{\tau}_b)\in M_h$, $\grad_w\boldsymbol{\tau}_h\in \Sigma^{-1}_{h}$ is elementwise determined by
\begin{align*}
(\grad_w\boldsymbol{\tau}_h,\boldsymbol{\Psi})_T &= -(\div\boldsymbol{\Psi},\boldsymbol{\tau}_0)_T+(\boldsymbol{\Psi}\boldsymbol{n}_{\partial T},\boldsymbol{\tau}_b)_{\partial T} \\
&=(\boldsymbol{\Psi}\boldsymbol{n}_{\partial T},\boldsymbol{\tau}_b-\boldsymbol{\tau}_0)_{\partial T}, \qquad\qquad\quad \forall \ \boldsymbol{\Psi}\in \Sigma(T;\mathbb{S}\otimes\mathbb{R}^d), T\in\mathcal{T}_h.
\end{align*}
For $\boldsymbol{\tau}\in H_0^1(\Omega;\mathbb{S})$, define $Q_M\boldsymbol{\tau}\in M_h$ by local $L^2$-projection
\begin{equation*}
Q_M\boldsymbol{\tau}=(Q_{0,h}\boldsymbol{\tau},Q_{0,\mathcal{F}_h}\boldsymbol{\tau}).
\end{equation*}
Since $\div\boldsymbol{\Psi}\in\mathbb{P}_0(T;\mathbb{S})$ and $\boldsymbol{\Psi}\boldsymbol{n}_{\partial T}|_F\in\mathbb{P}_0(F;\mathbb{S})$ for every $\boldsymbol{\Psi}\in\Sigma(T;\mathbb{S}\otimes\mathbb{R}^d)$, the projection orthogonality and elementwise integration by parts give
\begin{equation}
\label{hybrid-change}
\grad_w(Q_M\boldsymbol{\tau})=Q_{\Sigma}(\grad\boldsymbol{\tau}),
\end{equation}
where $Q_{\Sigma}$ is the $L^2$-projection to the space $\Sigma^{-1}_h$.
Since $\Sigma^{-1}_h$ contains the piecewise constant third-order tensors, the elementwise projection satisfies
\begin{equation}
\label{Qsigma-error}
\|\boldsymbol{\Psi}-Q_\Sigma\boldsymbol{\Psi}\|_0\lesssim h^s|\boldsymbol{\Psi}|_s \quad \forall~\boldsymbol{\Psi}\in H^s(\Omega;\mathbb S\otimes\mathbb R^d),\quad 0\leq s\leq 1.
\end{equation}

We equip $M_h$ with the norm
\begin{equation*}
\|\boldsymbol{\tau}_h\|_{1,h}^2:=\sum_{T\in\mathcal{T}_h}h^{-1}_T(2\mu\|\boldsymbol{\tau}_0-\boldsymbol{\tau}_b\|^2_{\partial T}+\lambda\|\tr(\boldsymbol{\tau}_0-\boldsymbol{\tau}_b)\|^2_{\partial T}), \quad \forall \ \boldsymbol{\tau}_h\in M_h.
\end{equation*}
Recall that $\mathcal{C} \boldsymbol{\tau} := 2\mu\boldsymbol{\tau} + \lambda\tr(\boldsymbol{\tau}) \boldsymbol{I}$ for $\boldsymbol{\tau}\in\mathbb{S}$. For any $\boldsymbol{\tau}_h\in M_h$, we have
\begin{equation*}
\|\boldsymbol{\tau}_h\|_{1,h}^2 = \sum_{T\in\mathcal{T}_h}h^{-1}_T\|\boldsymbol{\tau}_0-\boldsymbol{\tau}_b\|_{\mathcal{C},\partial T}^2,
\end{equation*}
where $\|\boldsymbol{\varsigma}\|_{\mathcal{C},\partial T}^2:=(\mathcal{C}\boldsymbol{\varsigma},\boldsymbol{\varsigma})_{\partial T}$.

\begin{lemma}
The following norm equivalence holds:
\begin{equation}\label{normequiv}
\|\grad_w\boldsymbol{\tau}_h\|_{\widetilde{\mathcal{C}}}\eqsim\|\boldsymbol{\tau}_h\|_{1,h} \quad \forall \ \boldsymbol{\tau}_h\in M_h.
\end{equation}
Here, the squared weighted norm is defined by $\|\boldsymbol{\Psi}\|_{\widetilde{\mathcal{C}}}^2:=(\widetilde{\mathcal{C}}\boldsymbol{\Psi},\boldsymbol{\Psi})$.
\end{lemma}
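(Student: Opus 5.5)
The proof works elementwise with the explicit face basis \eqref{tensor-basis}, mirroring the proof of Lemma~\ref{inf-supH2-lemma} with $\widetilde{\mathcal A}$ and $\widetilde{\mathcal C}$ interchanged. Fix $T\in\mathcal T_h$ and set $\boldsymbol{\delta}:=\boldsymbol{\tau}_b-\boldsymbol{\tau}_0$ on $\partial T$. This is piecewise constant and $\mathbb S$-valued, with value $\boldsymbol{\delta}_i$ on $F_i$. By definition, $\boldsymbol{G}:=\grad_w\boldsymbol{\tau}_h|_T$ is the unique element of $\Sigma(T;\mathbb S\otimes\mathbb R^d)$ with $(\boldsymbol G,\boldsymbol\Psi)_T=(\boldsymbol\Psi\boldsymbol n_{\partial T},\boldsymbol\delta)_{\partial T}$ for all $\boldsymbol\Psi\in\Sigma(T;\mathbb S\otimes\mathbb R^d)$. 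The goal is to show
\[
(\widetilde{\mathcal C}\boldsymbol G,\boldsymbol G)_T\eqsim h_T^{-1}(\mathcal C\boldsymbol\delta,\boldsymbol\delta)_{\partial T}
\]
with constants depending only on shape regularity and $d$, not on $\lambda$ or $\mu$. Summing over $T$ then gives \eqref{normequiv}.

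\emph{Lower bound $\|\boldsymbol\tau_h\|_{1,h}\lesssim\|\grad_w\boldsymbol\tau_h\|_{\widetilde{\mathcal C}}$.} I take the test function $\boldsymbol\Psi\in\Sigma(T;\mathbb S\otimes\mathbb R^d)$ determined by the DoFs $\boldsymbol\Psi\boldsymbol n_{\partial T}|_{F_i}=h_T^{-1}\mathcal C\boldsymbol\delta_i$, that is,
\[
\boldsymbol\Psi=\sum_i h_T^{-1}\mathcal C\boldsymbol\delta_i\otimes(\texttt v_i-\boldsymbol x)(\boldsymbol n_{F_i}\cdot\nabla\lambda_i),
\]
with the sign of $\boldsymbol n_{F_i}$ matched to $\boldsymbol n_{\partial T}$. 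Then $(\boldsymbol G,\boldsymbol\Psi)_T=h_T^{-1}(\mathcal C\boldsymbol\delta,\boldsymbol\delta)_{\partial T}$. By \eqref{eq:Atensorprop}, $\widetilde{\mathcal A}\boldsymbol\Psi=\sum_i h_T^{-1}\boldsymbol\delta_i\otimes(\cdots)$. Repeating the argument of \eqref{inf-supH2-pf2} (the basis functions are bounded by $1$, Cauchy--Schwarz in the $\mathcal C$-inner product, and $|T|\eqsim h_T|F_i|$) gives $\|\boldsymbol\Psi\|_{\widetilde{\mathcal A},T}^2\lesssim h_T^{-1}(\mathcal C\boldsymbol\delta,\boldsymbol\delta)_{\partial T}$. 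Since $\widetilde{\mathcal C}=\widetilde{\mathcal A}^{-1}$, the weighted Cauchy--Schwarz inequality $(\boldsymbol G,\boldsymbol\Psi)_T\le\|\boldsymbol G\|_{\widetilde{\mathcal C},T}\|\boldsymbol\Psi\|_{\widetilde{\mathcal A},T}$ then yields the lower bound.

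\emph{Upper bound $\|\grad_w\boldsymbol\tau_h\|_{\widetilde{\mathcal C}}\lesssim\|\boldsymbol\tau_h\|_{1,h}$.} I would write
\[
\|\boldsymbol G\|_{\widetilde{\mathcal C},T}^2=(\boldsymbol G,\widetilde{\mathcal C}\boldsymbol G)_T=((\widetilde{\mathcal C}\boldsymbol G)\boldsymbol n_{\partial T},\boldsymbol\delta)_{\partial T}.
\]
This step requires $\widetilde{\mathcal C}\boldsymbol G\in\Sigma(T;\mathbb S\otimes\mathbb R^d)$, which holds because $\widetilde{\mathcal C}$ acts slice-wise on the $\mathbb S$ factor and preserves $\mathbb P_0(T;\mathbb S\otimes\mathbb R^d)+\mathbb P_0(T;\mathbb S)\otimes\boldsymbol x$. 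By \eqref{eq:Ctildetensorprop}, $(\widetilde{\mathcal C}\boldsymbol G)\boldsymbol n=\mathcal C(\boldsymbol G\boldsymbol n)$. Cauchy--Schwarz in the $\mathcal C$-inner product on $\partial T$ then gives
\[
\|\boldsymbol G\|_{\widetilde{\mathcal C},T}^2\le\big(h_T(\mathcal C(\boldsymbol G\boldsymbol n),\boldsymbol G\boldsymbol n)_{\partial T}\big)^{1/2}\big(h_T^{-1}(\mathcal C\boldsymbol\delta,\boldsymbol\delta)_{\partial T}\big)^{1/2}.
\]
The first factor equals $(h_T((\widetilde{\mathcal C}\boldsymbol G)\boldsymbol n,\boldsymbol G\boldsymbol n)_{\partial T})^{1/2}$, which is bounded by $\|\boldsymbol G\|_{\widetilde{\mathcal C},T}$ via the same inverse/trace inequality used in the proof of Theorem~\ref{wp-theorem}, now with $\widetilde{\mathcal C}$ in place of $\widetilde{\mathcal A}$. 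Dividing through by $\|\boldsymbol G\|_{\widetilde{\mathcal C},T}$ gives the upper bound.

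\emph{Main obstacle.} The delicate point is $\lambda$-uniformity in the inverse/trace estimate $h_T(\widetilde{\mathcal C}\boldsymbol G,\boldsymbol G)_{\partial T}\lesssim(\widetilde{\mathcal C}\boldsymbol G,\boldsymbol G)_T$. My plan is to split $\widetilde{\mathcal C}$ into its deviatoric part and its trace part $\boldsymbol I\otimes\tr$, i.e.\ $(\widetilde{\mathcal C}\boldsymbol\Psi,\boldsymbol\Psi)=2\mu\|\dev\boldsymbol\Psi\|^2+(2\mu/d+\lambda)\|\tr\boldsymbol\Psi\|^2$ pointwise. Because $\dev$ and $\tr$ commute with restriction to $\partial T$ and map the local polynomial space into fixed finite-dimensional polynomial spaces, the scalar inverse trace inequality can be applied separately to $\dev\boldsymbol G$ and $\tr\boldsymbol G$, each with its own weight. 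This makes the constant independent of $\lambda$. The same splitting justifies the constant in the basis-function bound used in the lower bound.
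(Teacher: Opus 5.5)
Your proposal is correct and follows essentially the same route as the paper. For the upper bound you test the definition of $\grad_w$ with $\widetilde{\mathcal C}\grad_w\boldsymbol\tau_h$ and use \eqref{eq:Ctildetensorprop}, Cauchy--Schwarz in the $\mathcal C$-inner product and an inverse trace inequality; for the lower bound you use the test function with normal traces $h_T^{-1}\mathcal C(\boldsymbol\tau_b-\boldsymbol\tau_0)$, exactly as in Lemma~\ref{inf-supH2-lemma}. Your splitting $(\widetilde{\mathcal C}\boldsymbol\Psi,\boldsymbol\Psi)=2\mu|\dev\boldsymbol\Psi|^2+(2\mu/d+\lambda)|\tr\boldsymbol\Psi|^2$, which makes the inverse trace inequality $\lambda$-uniform, together with the pointwise bound $(\mathcal C(\boldsymbol G\boldsymbol n),\boldsymbol G\boldsymbol n)\le(\widetilde{\mathcal C}\boldsymbol G,\boldsymbol G)$ (from the slice-wise structure of $\widetilde{\mathcal C}$ and $|\boldsymbol n|=1$), supplies a detail the paper leaves implicit.
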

\begin{proof}
Let $T\in\mathcal{T}_h$.
Since $\widetilde{\mathcal{C}}$ maps $\Sigma(T;\mathbb{S}\otimes\mathbb{R}^d)$ into itself by \eqref{eq:Ctensorprop}, $\widetilde{\mathcal{C}}\grad_w\boldsymbol{\tau}_h$ is an admissible test function in the definition of $\grad_w\boldsymbol{\tau}_h$. Hence, by the definition of $\grad_w\boldsymbol{\tau}_h$ and \eqref{eq:Ctildetensorprop},
% \begin{align*}
% \|\grad_w\boldsymbol{\tau}_h\|_{\widetilde{\mathcal{C}}}^2&=(\widetilde{\mathcal{C}}(\grad_w\boldsymbol{\tau}_h), \grad_w\boldsymbol{\tau}_h)= \sum_{T\in\mathcal{T}_h}\big(\big(\widetilde{\mathcal{C}}(\grad_w\boldsymbol{\tau}_h)\big)\boldsymbol{n},\boldsymbol{\tau}_b-\boldsymbol{\tau}_0\big)_{\partial T} \\
% &= \sum_{T\in\mathcal{T}_h}\big(\mathcal{C}\big((\grad_w\boldsymbol{\tau}_h)\boldsymbol{n}\big),\boldsymbol{\tau}_b-\boldsymbol{\tau}_0\big)_{\partial T}\leq \sum_{T\in\mathcal{T}_h}\|(\grad_w\boldsymbol{\tau}_h)\boldsymbol{n}\|_{\mathcal{C},\partial T}\|\boldsymbol{\tau}_0-\boldsymbol{\tau}_b\|_{\mathcal{C},\partial T}
% \end{align*}
\begin{align*}
(\widetilde{\mathcal{C}}(\grad_w\boldsymbol{\tau}_h), \grad_w\boldsymbol{\tau}_h)_T&= \big(\big(\widetilde{\mathcal{C}}(\grad_w\boldsymbol{\tau}_h)\big)\boldsymbol{n}_{\partial T},\boldsymbol{\tau}_b-\boldsymbol{\tau}_0\big)_{\partial T} \\
&= \big(\mathcal{C}\big((\grad_w\boldsymbol{\tau}_h)\boldsymbol{n}_{\partial T}\big),\boldsymbol{\tau}_b-\boldsymbol{\tau}_0\big)_{\partial T} \\
&\leq \|(\grad_w\boldsymbol{\tau}_h)\boldsymbol{n}_{\partial T}\|_{\mathcal{C},\partial T}\|\boldsymbol{\tau}_0-\boldsymbol{\tau}_b\|_{\mathcal{C},\partial T}.
\end{align*}
Applying \eqref{eq:Ctildetensorprop} again, we get from the inverse inequality that
\begin{align*}
\|(\grad_w\boldsymbol{\tau}_h)\boldsymbol{n}_{\partial T}\|_{\mathcal{C},\partial T}^2&=\big(\mathcal{C}((\grad_w\boldsymbol{\tau}_h)\boldsymbol{n}_{\partial T}), (\grad_w\boldsymbol{\tau}_h)\boldsymbol{n}_{\partial T}\big)_{\partial T} \\
&= \big(\big(\widetilde{\mathcal{C}}(\grad_w\boldsymbol{\tau}_h)\big)\boldsymbol{n}_{\partial T},(\grad_w\boldsymbol{\tau}_h)\boldsymbol{n}_{\partial T}\big)_{\partial T} \\
&\leq \big(\widetilde{\mathcal{C}}(\grad_w\boldsymbol{\tau}_h),\grad_w\boldsymbol{\tau}_h\big)_{\partial T} \\
&\lesssim h_T^{-1}\big(\widetilde{\mathcal{C}}(\grad_w\boldsymbol{\tau}_h),\grad_w\boldsymbol{\tau}_h\big)_{T}.
\end{align*}
A combination of the last two inequalities gives
\begin{equation*}
\|\grad_w\boldsymbol{\tau}_h\|_{\widetilde{\mathcal{C}}}\lesssim\|\boldsymbol{\tau}_h\|_{1,h}.
\end{equation*}

For the reverse inequality, let $\boldsymbol{\Psi}_h\in \Sigma^{-1}_{h}$ be locally determined by the normal traces
\begin{equation*}
(\boldsymbol{\Psi}_h\boldsymbol{n}_{\partial T})|_{\partial T}=h_T^{-1}\mathcal{C}(\boldsymbol{\tau}_b-\boldsymbol{\tau}_0)|_{\partial T} \quad\quad \forall \ T\in\mathcal{T}_h.
\end{equation*}
Similar to the proof of Lemma \ref{inf-supH2-lemma}, we have
\begin{align*}
\|\boldsymbol{\Psi}_h\|_{\widetilde{\mathcal{A}}}\lesssim\|\boldsymbol{\tau}_h\|_{1,h}, \qquad
(\grad_w\boldsymbol{\tau}_h,\boldsymbol{\Psi}_h)=\sum_{T\in\mathcal{T}_h}(\boldsymbol{\Psi}_h\boldsymbol{n}_{\partial T},\boldsymbol{\tau}_b-\boldsymbol{\tau}_0)_{\partial T}=\|\boldsymbol{\tau}_h\|^2_{1,h}.
\end{align*}
The Cauchy--Schwarz inequality in the $\widetilde{\mathcal{C}}$--$\widetilde{\mathcal{A}}$ pairing now yields
\begin{align*}
\|\boldsymbol{\tau}_h\|^2_{1,h}
&=(\grad_w\boldsymbol{\tau}_h,\boldsymbol{\Psi}_h)
\leq \|\grad_w\boldsymbol{\tau}_h\|_{\widetilde{\mathcal{C}}}\|\boldsymbol{\Psi}_h\|_{\widetilde{\mathcal{A}}}
\lesssim \|\grad_w\boldsymbol{\tau}_h\|_{\widetilde{\mathcal{C}}}\|\boldsymbol{\tau}_h\|_{1,h}.
\end{align*}
This proves the reverse inequality and completes the proof.
\end{proof}

\subsection{Hybridized formulation}
The hybridized formulation of the mixed finite element method \eqref{SGE-MFEM} seeks $(\boldsymbol{u}_h,\boldsymbol{\eta}_h)\in V_{h}\times\mathbb{P}_0(\mathring{\mathcal{F}_h};\mathbb{S})$ such that
\begin{equation}\label{SGE-wg}
	\iota^2\big(\widetilde{\mathcal{C}}\grad_w(\boldsymbol{\varepsilon}_h(\boldsymbol{u}_h),\boldsymbol{\eta}_h),\grad_w(\boldsymbol{\varepsilon}_h(\boldsymbol{v}_h),\boldsymbol{\xi}_h)\big)+c_h(\boldsymbol{u}_h,\boldsymbol{v}_h) = (\boldsymbol{f},\boldsymbol{v}_h), 
\end{equation}
for any $\boldsymbol{v}_h\in V_{h}$ and $\boldsymbol{\xi}_h\in \mathbb{P}_0(\mathring{\mathcal{F}_h};\mathbb{S})$.
The bilinear form in \eqref{SGE-wg} is symmetric. Moreover, \eqref{eq:chcoercivity} and \eqref{normequiv} imply that the bilinear form is positive definite, and the hybridized method \eqref{SGE-wg} is well-posed.

The following theorem demonstrates the equivalence between the hybridized method \eqref{SGE-wg} and the mixed method \eqref{SGE-MFEM}.
\begin{theorem}
Let $(\boldsymbol{u}_h,\boldsymbol{\eta}_h)\in V_{h}\times\mathbb{P}_0(\mathring{\mathcal{F}_h};\mathbb{S})$ solve the hybridized method \eqref{SGE-wg}. Set $\boldsymbol{\Phi}_h:=\iota^2\widetilde{\mathcal{C}}\grad_w(\boldsymbol{\varepsilon}_h(\boldsymbol{u}_h),\boldsymbol{\eta}_h)$. Then $\boldsymbol{\Phi}_h\in\Sigma_{h}$, and $(\boldsymbol{\Phi}_h, \boldsymbol{u}_h)$ solves the mixed method \eqref{SGE-MFEM}.
\end{theorem}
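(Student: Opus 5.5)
The proof proceeds in three steps. First we show that $\boldsymbol{\Phi}_h\in\Sigma_h$. Then we check \eqref{SGE-MFEM1} and \eqref{SGE-MFEM2} one at a time, using the two families of test functions in \eqref{SGE-wg}. Write $\boldsymbol{\tau}_h:=(\boldsymbol{\varepsilon}_h(\boldsymbol{u}_h),\boldsymbol{\eta}_h)\in M_h$; this is legitimate because $\boldsymbol{\varepsilon}_h(\boldsymbol{u}_h)\in\mathbb{P}_0(\mathcal{T}_h;\mathbb{S})$. By \eqref{eq:Ctensorprop}, $\widetilde{\mathcal{C}}$ maps $\Sigma(T;\mathbb{S}\otimes\mathbb{R}^d)$ into itself, so $\boldsymbol{\Phi}_h=\iota^2\widetilde{\mathcal{C}}\grad_w\boldsymbol{\tau}_h\in\Sigma_h^{-1}$. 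It remains to show that its normal traces are single-valued.

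\textbf{Normal continuity.} Take $\boldsymbol{v}_h=0$ and $\boldsymbol{\xi}_h$ arbitrary in \eqref{SGE-wg}. Then
\[
(\boldsymbol{\Phi}_h,\grad_w(0,\boldsymbol{\xi}_h))=0 .
\]
By the definition of $\grad_w$, with the test function $\boldsymbol{\Phi}_h|_T\in\Sigma(T;\mathbb{S}\otimes\mathbb{R}^d)$ on each element, this reads
\[
\sum_{T}(\boldsymbol{\Phi}_h\boldsymbol{n}_{\partial T},\boldsymbol{\xi}_h)_{\partial T}=\sum_{F\in\mathring{\mathcal F}_h}([\![\boldsymbol{\Phi}_h\boldsymbol{n}]\!],\boldsymbol{\xi}_h)_F=0 .
\]
Here $[\![\cdot]\!]$ denotes the sum of the two one-sided normal traces. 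On each face $\boldsymbol{\Phi}_h\boldsymbol{n}$ is constant, and $\boldsymbol{\xi}_h$ ranges over all of $\mathbb{P}_0(\mathring{\mathcal{F}}_h;\mathbb{S})$. Hence the DoFs \eqref{tensor-dof} agree from both sides on every interior face, and $\boldsymbol{\Phi}_h\in\Sigma_h$. This step needs care: the identity $(\grad_w\boldsymbol{\tau}_h,\boldsymbol{\Psi})=\sum_T(\boldsymbol{\Psi}\boldsymbol{n}_{\partial T},\boldsymbol{\tau}_b-\boldsymbol{\tau}_0)_{\partial T}$ holds for broken $\boldsymbol{\Psi}$, which is what allows the substitution.

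\textbf{Equation \eqref{SGE-MFEM1}.} For $\boldsymbol{\Psi}_h\in\Sigma_h$ we have $a(\boldsymbol{\Phi}_h,\boldsymbol{\Psi}_h)=\iota^{-2}(\widetilde{\mathcal{A}}\boldsymbol{\Phi}_h,\boldsymbol{\Psi}_h)=(\grad_w\boldsymbol{\tau}_h,\boldsymbol{\Psi}_h)$, since $\widetilde{\mathcal{A}}\widetilde{\mathcal{C}}=I$. By definition this equals
\[
\sum_T(\boldsymbol{\Psi}_h\boldsymbol{n}_{\partial T},\boldsymbol{\eta}_h)_{\partial T}-(\div\boldsymbol{\Psi}_h,\boldsymbol{\varepsilon}_h(\boldsymbol{u}_h)).
\]
The first sum vanishes for two reasons. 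Across interior faces, the normal traces of $\boldsymbol{\Psi}_h$ are single-valued with opposite orientations, so the contributions cancel against the single-valued $\boldsymbol{\eta}_h$. On boundary faces, $\boldsymbol{\eta}_h=0$. What remains is $-b_h(\boldsymbol{\Psi}_h,\boldsymbol{u}_h)$, which gives \eqref{SGE-MFEM1}.

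\textbf{Equation \eqref{SGE-MFEM2}.} Take $\boldsymbol{\xi}_h=0$ and $\boldsymbol{v}_h$ arbitrary in \eqref{SGE-wg}. This gives
\[
(\boldsymbol{\Phi}_h,\grad_w(\boldsymbol{\varepsilon}_h(\boldsymbol{v}_h),0))+c_h(\boldsymbol{u}_h,\boldsymbol{v}_h)=(\boldsymbol{f},\boldsymbol{v}_h).
\]
By the definition of $\grad_w$ with $\boldsymbol{\tau}_b=0$, the first term equals $-(\div\boldsymbol{\Phi}_h,\boldsymbol{\varepsilon}_h(\boldsymbol{v}_h))=-b_h(\boldsymbol{\Phi}_h,\boldsymbol{v}_h)$. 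Rearranging gives $b_h(\boldsymbol{\Phi}_h,\boldsymbol{v}_h)-c_h(\boldsymbol{u}_h,\boldsymbol{v}_h)=-(\boldsymbol{f},\boldsymbol{v}_h)$, which is \eqref{SGE-MFEM2}.

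The only delicate point is the first step. One must check that the multiplier test space $\mathbb{P}_0(\mathring{\mathcal F}_h;\mathbb{S})$ matches the DoFs \eqref{tensor-dof} exactly, so that vanishing of the pairing forces full normal continuity. One must also check that no condition is imposed on boundary faces, consistent with $\Sigma_h$ having free boundary normal traces.
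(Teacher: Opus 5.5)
Your proof is correct and follows the paper's argument essentially step by step. You test with $\boldsymbol{v}_h=0$ to obtain normal continuity of $\boldsymbol{\Phi}_h$, test with $\boldsymbol{\xi}_h=0$ and use the definition of $\grad_w$ to recover \eqref{SGE-MFEM2}, and use the facewise cancellation of the $\boldsymbol{\eta}_h$-terms for $\boldsymbol{\Psi}_h\in\Sigma_h$ to recover \eqref{SGE-MFEM1}; the only difference is the immaterial order in which you verify the two equations.
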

\begin{proof}
Since $\widetilde{\mathcal{C}}$ leaves the local space $\Sigma(T;\mathbb{S}\otimes\mathbb{R}^d)$ invariant, we first have $\boldsymbol{\Phi}_h\in\Sigma_h^{-1}$.
Choosing $\boldsymbol{v}_h=0$, equation \eqref{SGE-wg} reduces to 
\begin{equation*}
\big(\boldsymbol{\Phi}_h,\grad_w(0,\boldsymbol{\xi}_h)\big)=0,\quad\forall\,\boldsymbol{\xi}_h\in \mathbb{P}_0(\mathring{\mathcal{F}_h};\mathbb{S}).
\end{equation*}
By the definition of $\grad_w$,
\begin{equation*}
\sum_{T\in\mathcal{T}_h}(\boldsymbol{\Phi}_h\boldsymbol{n}_{\partial T},\boldsymbol{\xi}_h)_{\partial T}=0,\quad\forall\,\boldsymbol{\xi}_h\in \mathbb{P}_0(\mathring{\mathcal{F}_h};\mathbb{S}).
\end{equation*}
Thus, $\boldsymbol{\Phi}_h\in\Sigma_{h}$.
 
Taking $\boldsymbol{\xi}_h=0$ in \eqref{SGE-wg} and using the definition of $\grad_w$ gives \eqref{SGE-MFEM2}. For $\boldsymbol{\Psi}_h\in  \Sigma_{h}$, the multiplier terms cancel facewise because $\boldsymbol{\Psi}_h\boldsymbol{n}_{F}$ is continuous and $\boldsymbol{\eta}_h$ vanishes on boundary faces. Hence,
\begin{equation*}
a(\boldsymbol{\Phi}_h,\boldsymbol{\Psi}_h)=(\grad_w(\boldsymbol{\varepsilon}_h(\boldsymbol{u}_h),\boldsymbol{\eta}_h),\boldsymbol{\Psi}_h)=-b_h(\boldsymbol{\Psi}_h,\boldsymbol{u}_h),
\end{equation*}
which is exactly \eqref{SGE-MFEM1}. Thus, $(\boldsymbol{\Phi}_h, \boldsymbol{u}_h)$ solves the mixed method \eqref{SGE-MFEM}. Since both formulations are uniquely solvable, their tensor and displacement components coincide.
\end{proof}

\begin{theorem}
Let $(\boldsymbol{u}_h,\boldsymbol{\eta}_h)$ solve \eqref{SGE-wg}, and let $\boldsymbol{\Phi}_h$ be defined as in the preceding theorem. If $\boldsymbol{\Phi}\in H^2(\Omega;\mathbb{S}\otimes\mathbb{R}^d)$ and $\boldsymbol{u}\in H^3(\Omega;\mathbb{R}^d)$, then
\begin{equation}
\label{errresult2-hybrid}
\iota\| Q_M\boldsymbol{\varepsilon}(\boldsymbol{u})-(\boldsymbol{\varepsilon}_h(\boldsymbol{u}_h),\boldsymbol{\eta}_h)\|_{1,h} 
\lesssim h(|\div\boldsymbol{\Phi}|_{1}+\iota|\boldsymbol{\sigma}(\boldsymbol{u})|_{2}+|\boldsymbol{\sigma}(\boldsymbol{u})|_{1}).
\end{equation}
If Assumption~\ref{ass:uniform-regularity} holds, then
\begin{equation}
\label{errresult1-hybrid}
\iota\| Q_M\boldsymbol{\varepsilon}(\boldsymbol{u})-(\boldsymbol{\varepsilon}_h(\boldsymbol{u}_h),\boldsymbol{\eta}_h)\|_{1,h}\lesssim(\iota^{1/2}+h)\|\boldsymbol{f}\|_{0}.
\end{equation}
\end{theorem}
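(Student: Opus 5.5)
The plan is to reduce both estimates to the norm equivalence \eqref{normequiv}, the commuting property \eqref{hybrid-change}, and the already proved tensor error bounds \eqref{errresult1} and \eqref{errresult2}. Set $\boldsymbol{\tau}_h:=Q_M\boldsymbol{\varepsilon}(\boldsymbol{u})-(\boldsymbol{\varepsilon}_h(\boldsymbol{u}_h),\boldsymbol{\eta}_h)\in M_h$. Since $\boldsymbol{u}\in H_0^2(\Omega;\mathbb R^d)$, we have $\boldsymbol{\varepsilon}(\boldsymbol{u})\in H_0^1(\Omega;\mathbb S)$, so $Q_M\boldsymbol{\varepsilon}(\boldsymbol{u})$ is well defined. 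Its face component vanishes on boundary faces, so it lies in $M_h$. By \eqref{normequiv},
\[
\iota\|\boldsymbol{\tau}_h\|_{1,h}\eqsim \iota\|\grad_w\boldsymbol{\tau}_h\|_{\widetilde{\mathcal C}}.
\]
By linearity of $\grad_w$, \eqref{hybrid-change}, and the definition $\boldsymbol{\Phi}_h=\iota^2\widetilde{\mathcal C}\grad_w(\boldsymbol{\varepsilon}_h(\boldsymbol{u}_h),\boldsymbol{\eta}_h)$ from the preceding theorem,
\[
\iota^2\grad_w\boldsymbol{\tau}_h=\iota^2 Q_\Sigma\grad\boldsymbol{\varepsilon}(\boldsymbol{u})-\widetilde{\mathcal A}\boldsymbol{\Phi}_h .
\]
Using $\iota^2\grad\boldsymbol{\varepsilon}(\boldsymbol{u})=\widetilde{\mathcal A}\boldsymbol{\Phi}$, and the fact that $\widetilde{\mathcal A}$ commutes with $Q_\Sigma$ because it is a constant map preserving $\Sigma(T;\mathbb S\otimes\mathbb R^d)$ and self-adjoint, this becomes $\widetilde{\mathcal A}(Q_\Sigma\boldsymbol{\Phi}-\boldsymbol{\Phi}_h)$. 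Hence
\[
\iota\|\boldsymbol{\tau}_h\|_{1,h}\eqsim\iota^{-1}\|Q_\Sigma\boldsymbol{\Phi}-\boldsymbol{\Phi}_h\|_{\widetilde{\mathcal A}} .
\]

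Next, split $Q_\Sigma\boldsymbol{\Phi}-\boldsymbol{\Phi}_h=(Q_\Sigma\boldsymbol{\Phi}-\boldsymbol{\Phi})+(\boldsymbol{\Phi}-\boldsymbol{\Phi}_h)$. The second term is bounded by \eqref{errresult1} in the smooth case and by \eqref{errresult2} in the robust case. For the first term, the key point is that $\widetilde{\mathcal A}$ commutes with $Q_\Sigma$, so
\[
\|Q_\Sigma\boldsymbol{\Phi}-\boldsymbol{\Phi}\|_{\widetilde{\mathcal A}}\le\|\boldsymbol{\Phi}\|_{\widetilde{\mathcal A}},
\]
since $Q_\Sigma$ is also an orthogonal projection in the $\widetilde{\mathcal A}$-inner product. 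It is also bounded by $\|Q_\Sigma\boldsymbol{\Phi}-\boldsymbol{\Phi}\|_0\lesssim h|\boldsymbol{\Phi}|_1$, with a $\lambda$-independent constant because $\|\cdot\|_{\widetilde{\mathcal A}}\lesssim\|\cdot\|_0$.

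In the smooth case, \eqref{Qsigma-error} with $s=1$ and $\boldsymbol{\Phi}=\iota^2\grad\boldsymbol{\sigma}(\boldsymbol{u})$ give
\[
\iota^{-1}\|Q_\Sigma\boldsymbol{\Phi}-\boldsymbol{\Phi}\|_{\widetilde{\mathcal A}}\lesssim \iota h|\boldsymbol{\sigma}(\boldsymbol{u})|_2,
\]
and together with \eqref{errresult1} this yields \eqref{errresult2-hybrid}. In the robust case, use $s=0$ (equivalently the contraction bound) and the argument of \eqref{errresult-pf4}:
\[
\iota^{-1}\|Q_\Sigma\boldsymbol{\Phi}-\boldsymbol{\Phi}\|_{0}\lesssim\iota^{-1}\|\boldsymbol{\Phi}\|_0\lesssim\iota\|\boldsymbol{\sigma}(\boldsymbol{u})\|_1\lesssim \iota^{1/2}\|\boldsymbol f\|_0 .
\]
This uses \eqref{Regularity-u}--\eqref{Regularity-divu}, which give $\iota\|\boldsymbol{\sigma}(\boldsymbol{u})\|_1\lesssim\iota(|\boldsymbol{u}|_2+\lambda|\div\boldsymbol{u}|_1)\lesssim\iota^{1/2}\|\boldsymbol f\|_0$. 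Combining with \eqref{errresult2} gives \eqref{errresult1-hybrid}.

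The main obstacle is the identity $\grad_w\boldsymbol{\tau}_h=\iota^{-2}\widetilde{\mathcal A}(Q_\Sigma\boldsymbol{\Phi}-\boldsymbol{\Phi}_h)$, together with tracking that every constant is $\lambda$-independent. One must check that \eqref{normequiv} is applied in the $\widetilde{\mathcal C}$-norm, which matches the $\widetilde{\mathcal A}$-norm after the substitution $\|\widetilde{\mathcal A}\boldsymbol{\Psi}\|_{\widetilde{\mathcal C}}=\|\boldsymbol{\Psi}\|_{\widetilde{\mathcal A}}$. One must also verify the commutation $Q_\Sigma\widetilde{\mathcal A}=\widetilde{\mathcal A}Q_\Sigma$ from \eqref{eq:Atensorprop}. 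Once this is in place, the rest is the triangle inequality applied to the tensor errors already established.
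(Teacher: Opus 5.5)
Your proposal is correct and follows essentially the paper's route: you use \eqref{hybrid-change} and the commutation of $Q_\Sigma$ with the slice-wise constant, self-adjoint, $\Sigma$-preserving compliance operator to identify $\iota\|\grad_w\boldsymbol{\tau}_h\|_{\widetilde{\mathcal C}}$ with $\iota^{-1}\|Q_\Sigma\boldsymbol{\Phi}-\boldsymbol{\Phi}_h\|_{\widetilde{\mathcal A}}$, apply \eqref{normequiv}, and split off $\boldsymbol{\Phi}-Q_\Sigma\boldsymbol{\Phi}$, bounded via \eqref{Qsigma-error}, before invoking \eqref{errresult1} or \eqref{errresult2}. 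Commuting $\widetilde{\mathcal A}$ rather than $\widetilde{\mathcal C}$ with $Q_\Sigma$, and your $\widetilde{\mathcal A}$-orthogonal contraction bound, are only cosmetic variations of the paper's argument.
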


\begin{proof}
Notice that $\widetilde{\mathcal{C}}$ is self-adjoint and leaves $\Sigma_h^{-1}$ invariant. Hence, for any $\boldsymbol{X}\in L^2(\Omega;\mathbb{S}\otimes\mathbb{R}^d)$ and $\boldsymbol{\Psi}_h\in\Sigma_h^{-1}$,
\begin{align*}
(Q_\Sigma\widetilde{\mathcal{C}}\boldsymbol{X},\boldsymbol{\Psi}_h)
&=(\widetilde{\mathcal{C}}\boldsymbol{X},\boldsymbol{\Psi}_h)
=(\boldsymbol{X},\widetilde{\mathcal{C}}\boldsymbol{\Psi}_h)
=(Q_\Sigma\boldsymbol{X},\widetilde{\mathcal{C}}\boldsymbol{\Psi}_h)
=(\widetilde{\mathcal{C}}Q_\Sigma\boldsymbol{X},\boldsymbol{\Psi}_h).
\end{align*}
Therefore, $Q_\Sigma\widetilde{\mathcal{C}}=\widetilde{\mathcal{C}}Q_\Sigma$. Since $\boldsymbol{\Phi}=\iota^2\widetilde{\mathcal{C}}\grad\boldsymbol{\varepsilon}(\boldsymbol{u})$, it follows from \eqref{hybrid-change} that
\begin{equation*}
Q_{\Sigma}\boldsymbol{\Phi}=\iota^2\widetilde{\mathcal{C}}Q_{\Sigma}(\grad\boldsymbol{\varepsilon}(\boldsymbol{u}))=\iota^2\widetilde{\mathcal{C}}\grad_w(Q_M\boldsymbol{\varepsilon}(\boldsymbol{u})).
\end{equation*}
Then
\begin{equation*}
Q_{\Sigma}\boldsymbol{\Phi}-\boldsymbol{\Phi}_h=\iota^2\widetilde{\mathcal{C}}\grad_w\big(Q_M\boldsymbol{\varepsilon}(\boldsymbol{u}) - (\boldsymbol{\varepsilon}_h(\boldsymbol{u}_h),\boldsymbol{\eta}_h)\big).
\end{equation*}
This implies
\begin{equation*}
\iota\|\grad_w\big(Q_M\boldsymbol{\varepsilon}(\boldsymbol{u}) - (\boldsymbol{\varepsilon}_h(\boldsymbol{u}_h),\boldsymbol{\eta}_h)\big)\|_{\widetilde{\mathcal{C}}} = \iota^{-1}\|Q_{\Sigma}\boldsymbol{\Phi}-\boldsymbol{\Phi}_h\|_{\widetilde{\mathcal{A}}}. 
\end{equation*}
By the norm equivalence \eqref{normequiv} and the triangle inequality,
\begin{equation}\label{eq:20260615}
\iota\| Q_M\boldsymbol{\varepsilon}(\boldsymbol{u})-(\boldsymbol{\varepsilon}_h(\boldsymbol{u}_h),\boldsymbol{\eta}_h)\|_{1,h}\lesssim \iota^{-1}\|\boldsymbol{\Phi}-Q_{\Sigma}\boldsymbol{\Phi}\|_{\widetilde{\mathcal{A}}}+ \iota^{-1}\|\boldsymbol{\Phi}-\boldsymbol{\Phi}_h\|_{\widetilde{\mathcal{A}}}.
\end{equation}
If $\boldsymbol{\Phi}\in H^2(\Omega;\mathbb{S}\otimes\mathbb{R}^d)$ and $\boldsymbol{u}\in H^3(\Omega;\mathbb{R}^d)$, then \eqref{Qsigma-error} and $\boldsymbol{\Phi}=\iota^2\grad\boldsymbol{\sigma}(\boldsymbol{u})$ yield
\begin{equation*}
\iota^{-1}\|\boldsymbol{\Phi}-Q_{\Sigma}\boldsymbol{\Phi}\|_{\widetilde{\mathcal{A}}}\lesssim \iota^{-1}h|\boldsymbol{\Phi}|_1 \lesssim \iota h|\boldsymbol{\sigma}(\boldsymbol{u})|_2.
\end{equation*}
Combining this estimate with \eqref{eq:20260615} and \eqref{errresult1} proves \eqref{errresult2-hybrid}.

Under Assumption~\ref{ass:uniform-regularity}, the $L^2$-stability of $Q_\Sigma$ and \eqref{Regularity-u}--\eqref{Regularity-divu} give
\begin{equation*}
\iota^{-1}\|\boldsymbol{\Phi}-Q_{\Sigma}\boldsymbol{\Phi}\|_{\widetilde{\mathcal{A}}}\lesssim \iota^{-1}\|\boldsymbol{\Phi}-Q_{\Sigma}\boldsymbol{\Phi}\|_0\lesssim \iota^{-1}\|\boldsymbol{\Phi}\|_0 \lesssim \iota^{1/2}\|\boldsymbol{f}\|_0.
\end{equation*}
Combining this estimate with \eqref{eq:20260615} and \eqref{errresult2} proves \eqref{errresult1-hybrid}.
\end{proof}

% \begin{remark}\rm
% Estimate \eqref{errresult2-hybrid} establishes first-order supercloseness of the discrete weak strain pair $(\boldsymbol{\varepsilon}_h(\boldsymbol{u}_h),\boldsymbol{\eta}_h)$ to the projected exact strain $Q_M\boldsymbol{\varepsilon}(\boldsymbol{u})$. Estimate \eqref{errresult1-hybrid} is uniform with respect to both $\iota$ and $\lambda$.
% \end{remark}

%%%%%%%%%%%%%%%%%%%%%%%%%%%%%%%%%%
%%%%%%% Numerical results
%%%%%%%%%%%%%%%%%%%%%%%%%%%%%%%%%%
\section{Numerical results}\label{sec5}
In this section, we verify the theoretical convergence rates and the robustness of the mixed FEM \eqref{SGE-MFEM} with respect to the size parameter $\iota$ and the Lam\'e constant $\lambda$ through numerical examples in two and three dimensions. Let $\Omega=(0,1)^d$ for $d=2,3$, and use uniform simplicial meshes.
%, see Figure \ref{fig:mesh}. 
Set $\mu=1$.
Introduce the following numerical error
\begin{align*}
\textrm{Err}_1 := \iota^{-1}\|\boldsymbol{\Phi}-\boldsymbol{\Phi}_h\|_{\widetilde{\mathcal{A}}}+{\interleave I_h^{\rm CR}\boldsymbol{u}-\boldsymbol{u}_h\interleave}_{\iota,h}+{\interleave \boldsymbol{u}-\boldsymbol{u}_h\interleave}_{1,h}.
\end{align*}
For the smooth solutions considered below, the triangle inequality, \eqref{IhCRerror1}, and \eqref{stress-controls-u} show that \eqref{errresult1} also controls the $\interleave\cdot\interleave_{1,h}$ component of $\interleave I_h^{\rm CR}\boldsymbol{u}-\boldsymbol{u}_h\interleave_{\iota,h}$. Thus, \eqref{errresult1} predicts the first-order convergence of $\textrm{Err}_1$.

%\begin{figure}[htbp] 
%\centering
%\begin{subfigure}[b]{0.4\textwidth}
%	\centering
%	\includegraphics[width=\textwidth]{figures/2dmesh-8.pdf}
%	\caption{$h=1/8$}
%\end{subfigure}
%\hspace{0.05\textwidth}
%\begin{subfigure}[b]{0.4\textwidth}
%	\centering
%	\includegraphics[width=\textwidth]{figures/3dmesh-2.pdf}
%	\caption{$h=1/2$}
%\end{subfigure}
%\caption{Initial meshes in two dimensions and three dimensions.}
%\label{fig:mesh}
%\end{figure}
\subsection{Numerical results without boundary layers}
We first consider exact solutions that are divergence-free and do not exhibit boundary layers. The right-hand side $\boldsymbol{f}$ is computed from \eqref{SGE0} and is independent of the Lam\'{e} constant $\lambda$. The mesh size $h$ is refined from $2^{-3}$ to $2^{-7}$ in 2D, and from $2^{-1}$ to $2^{-5}$ in 3D. The size parameter $\iota$ takes $10^{-1}$ and $10^{-6}$, while $\lambda$ takes $1$ and $10^6$.
\begin{example}\label{example1}
\normalfont
In two dimensions, the exact displacement field is given by
$$
\boldsymbol{u}=
\left(
\begin{matrix}
	\sin^3(\pi x)\sin(2\pi y)\sin(\pi y)\\
	-\sin^3(\pi y)\sin(2\pi x)\sin(\pi x)
\end{matrix}
\right).
$$
The numerical errors $\textrm{Err}_1$ are listed in Table~\ref{table12}. We observe from Table~\ref{table12} that  $\textrm{Err}_1= \mathcal{O}(h)$ uniformly with respect to both $\iota$ and $\lambda$, confirming the optimal convergence rate predicted by the theoretical result \eqref{errresult1}.

\begin{table}
	\centering
	\caption{$\textrm{Err}_1$ of the mixed method (\ref{SGE-MFEM}) for Example \ref{example1} in 2D.  }
	\vspace{-1.0em}
	\label{table12}
	\begin{tabular}{ccccccc}
		\toprule
		$\lambda$ & $\iota\backslash h$ & 1/8 & 1/16 & 1/32 & 1/64 & 1/128 \\
		\midrule
		\multirow{4}{*}{$1$}
		& $10^{-1}$ & 1.935e+00 & 8.033e-01 & 3.380e-01 & 1.510e-01 & 7.089e-02 \\
		& rate & & 1.27 & 1.25 & 1.16 & 1.09 \\
		& $10^{-6}$ & 4.550e-01 & 1.971e-01 & 9.332e-02 & 4.594e-02 & 2.288e-02 \\
		& rate & & 1.21 & 1.08 & 1.02 & 1.01 \\
		\midrule
		\multirow{4}{*}{$10^{6}$}
		& $10^{-1}$ & 1.927e+00 & 8.248e-01 & 3.472e-01 & 1.538e-01 & 7.164e-02 \\
		& rate & & 1.22 & 1.25 & 1.17 & 1.10 \\
		& $10^{-6}$ & 4.187e-01 & 1.724e-01 & 7.854e-02 & 3.813e-02 & 1.891e-02 \\
		& rate & & 1.28 & 1.13 & 1.04 & 1.01 \\
		\bottomrule
	\end{tabular}
	\vspace{1.0em}
\end{table}

\end{example}

\begin{example}\label{example2}
\normalfont
In three dimensions, the solution takes the form
$$
\boldsymbol{u}=
\left(
\begin{matrix}
	2\sin^3(\pi x)\sin(2\pi y)\sin(\pi y)\sin(2\pi z)\sin(\pi z)\\
	-\sin^3(\pi y)\sin(2\pi z)\sin(\pi z)\sin(2\pi x)\sin(\pi x)\\
	-\sin^3(\pi z)\sin(2\pi x)\sin(\pi x)\sin(2\pi y)\sin(\pi y)
\end{matrix}
\right).
$$
Table~\ref{table12-3d} shows that $\textrm{Err}_1 = \mathcal{O}(h)$ uniformly with respect to both $\iota$ and $\lambda$, in agreement with the theoretical estimate \eqref{errresult1}.
%where a slight reduction in the observed convergence rate occurs.
%A similar phenomenon is seen in the 2D case with $\iota = 1$ on relatively coarse meshes. This suggests that finer discretizations may be required to attain the optimal rate when $\iota$ is relatively large.
%
%Furthermore, after increasing the penalty parameter to $\eta = 10$, Table~\ref{table12-eta10} shows that the method still achieves first-order convergence, demonstrating that the method is both robust and optimally convergent.
\begin{table}
	\centering
	\caption{$\textrm{Err}_1$ of the mixed method (\ref{SGE-MFEM}) for Example \ref{example2} in 3D.  }
	\vspace{-1.0em}
	\label{table12-3d}
	\begin{tabular}{ccccccc}
		\toprule
		$\lambda$ & $\iota\backslash h$ & 1/2 & 1/4 & 1/8 & 1/16 & 1/32\\
		\midrule
		\multirow{4}{*}{$1$}
		& $10^{-1}$ & 7.493e+00 & 6.230e+00 & 3.097e+00 & 1.446e+00 & 6.905e-01\\
		& rate &  & 0.27 & 1.01 & 1.10 & 1.07\\
		& $10^{-6}$ & 3.074e+00 & 2.382e+00 & 1.305e+00 & 6.617e-01 & 3.314e-01\\
		& rate &  & 0.37 & 0.87 & 0.98 & 1.00\\
		\midrule
		\multirow{4}{*}{$10^{6}$}
		& $10^{-1}$ & 7.417e+00 & 6.151e+00 & 3.088e+00 & 1.456e+00 & 6.962e-01 \\
		& rate &  & 0.27 & 0.99 & 1.08 & 1.06 \\
		& $10^{-6}$ & 3.060e+00 & 2.355e+00 & 1.288e+00 & 6.518e-01 & 3.260e-01\\
		& rate &  & 0.38 & 0.87 & 0.98 & 1.00\\
		\bottomrule
	\end{tabular}
	\vspace{1.0em}
\end{table}
%\begin{table}
%	\centering
%	\caption{$\textrm{Err}_1$ of the mixed method (\ref{SGE-MFEM}) for Example \ref{example1} with $\iota=1$, $\eta=10$ in 3D.  }
%	\vspace{-1.0em}
%	\label{table12-eta10}
%	\begin{tabular}{cccccc}
%		\toprule
%		$\lambda\backslash h$ & 1/2 & 1/4 & 1/8 & 1/16 & 1/32\\
%		\midrule
%		1 & 5.654e+01 & 5.339e+01 & 2.839e+01 & 1.562e+01 & 7.444e+00 \\
%		rate & & 0.08 & 0.91 & 0.86 & 1.07 \\
%		\midrule
%		$10^{6}$ & 5.519e+01 & 5.186e+01 & 2.741e+01 & 1.523e+01 & 7.571e+00\\
%		rate &  & 0.09 & 0.92 & 0.85 & 1.01 \\
%		\bottomrule
%	\end{tabular}
%	\vspace{1.0em}
%\end{table}
\end{example}
\subsection{Numerical results with boundary layers}
Next, we verify the convergence of the discrete method \eqref{SGE-MFEM} with boundary layers. We adopt divergence-free solutions of the linear elasticity problem \eqref{SGElinear}, which induce strong boundary layers as $\iota \rightarrow 0$. The right-hand side $\boldsymbol{f}$, computed from \eqref{SGElinear}, is used as the forcing term in \eqref{SGE0}, and notably, it is independent of both the size parameter $\iota$ and the Lam\'{e} constant $\lambda$. For the convergence tests, we take $\iota=10^{-6},10^{-8}$ and $\lambda=1,10^6$, using the same mesh refinement levels as in Examples~\ref{example1} and~\ref{example2}.
\begin{example}\label{example3}
\normalfont

In two dimensions, the linear elasticity solution is given by
$$
\boldsymbol{u}_0=
\left(
\begin{matrix}
	(e^{\cos(2\pi x)}-e)\sin(2\pi y)e^{\cos(2\pi y)}\\
	-(e^{\cos(2\pi y)}-e)\sin(2\pi x)e^{\cos(2\pi x)}
\end{matrix}
\right).
$$
%Next, we assess the performance of the method for problems with strong boundary layers, where the exact solution of the SGE model is variable but is expected to develop sharp gradients as $\iota \rightarrow 0$. We take the right side term $\boldsymbol{f}$ computed from (\ref{SGElinear}) as the right side function of problem \eqref{SGE0}. Take $\iota = 10^{-4}, 10^{-6}, 10^{-8}$ and $\lambda = 1,10^6$.
The error ${\interleave \boldsymbol{u}_0-\boldsymbol{u}_h\interleave}_{1,h}$, shown in Table~\ref{table34}, demonstrates uniform convergence of order $\mathcal{O}(h)$, in line with the theoretical estimate \eqref{errresult3}, despite the presence of strong boundary layers. Therefore, the discrete method (\ref{SGE-MFEM}) is not only robust with respect to the size parameter $\iota$ and the Lam\'{e} constant $\lambda$, but also optimal.

To further illustrate the boundary layer behavior, we visualize the Frobenius norm of $\iota^{-2}\boldsymbol{\Phi}_h$ with $h=\frac{1}{128}$. Although the solution $\boldsymbol{u}$ to problem \eqref{SGE0} with this right-hand side is not known explicitly, $\iota^{-2}\boldsymbol{\Phi}_h$ serves as a numerical approximation to $\grad\boldsymbol{\sigma}(\boldsymbol{u})$.
%Since $\boldsymbol{\Phi}_h$ is piecewise affine, we evaluate $\iota^{-2}\boldsymbol{\Phi}_h|_T$ separately on each element $T$ and plot its Frobenius norm as a discontinuous piecewise-polynomial surface; no nodal averaging or additional projection is applied across element interfaces.
Since $\boldsymbol{\Phi}_h$ is piecewise affine, we evaluate the pointwise Frobenius norm of $\iota^{-2}\boldsymbol{\Phi}_h|_T$ separately on each element $T$ and plot it as a discontinuous surface; no nodal averaging or additional projection is applied across element interfaces.
Figures~\ref{fig1} and~\ref{fig2} show that the boundary layers become increasingly pronounced as $\iota$ decreases.

\begin{table}
	\centering
	\caption{${\interleave \boldsymbol{u}_0-\boldsymbol{u}_h\interleave}_{1,h}$ of the mixed method (\ref{SGE-MFEM}) for Example~\ref{example3} in 2D.}
	\vspace{-1.0em}
	\label{table34}
	\begin{tabular}{ccccccc}
		\toprule
		$\lambda$ & $\iota\backslash h$ & 1/8 & 1/16 & 1/32 & 1/64 & 1/128 \\
		\midrule
		\multirow{4}{*}{$1$} 
		& $10^{-6}$ & 2.706e+00 & 1.169e+00 & 5.351e-01 & 2.591e-01 & 1.282e-01 \\
		& rate & & 1.21 & 1.13 & 1.05 & 1.02 \\
		& $10^{-8}$ & 2.706e+00 & 1.169e+00 & 5.351e-01 & 2.591e-01 & 1.282e-01 \\
		& rate & & 1.21 & 1.13 & 1.05 & 1.02 \\
		\midrule
		\multirow{4}{*}{$10^{6}$} 
		& $10^{-6}$ & 2.493e+00 & 1.016e+00 & 4.404e-01 & 2.088e-01 & 1.026e-01 \\
		& rate & & 1.29 & 1.21 & 1.08 & 1.02 \\
		& $10^{-8}$ & 2.493e+00 & 1.016e+00 & 4.404e-01 & 2.088e-01 & 1.026e-01 \\
		& rate & & 1.29 & 1.21 & 1.08 & 1.02 \\
		\bottomrule
	\end{tabular}
	\vspace{1.0em}
\end{table}

%\begin{figure}[htbp]
%	\centering
%	
%	% row1
%	\begin{subfigure}[b]{0.3\textwidth}
%		\centering
%		\includegraphics[width=\textwidth]{figures/bl00.pdf}
%		\caption{$\iota=1$}
%		\label{image1-1}
%	\end{subfigure}
%	\hfill
%	\begin{subfigure}[b]{0.3\textwidth}
%		\centering
%		\includegraphics[width=\textwidth]{figures/bl-10.pdf}
%		\caption{$\iota=10^{-1}$}
%		\label{image2-1}
%	\end{subfigure}
%	\hfill
%	\begin{subfigure}[b]{0.3\textwidth}
%		\centering
%		\includegraphics[width=\textwidth]{figures/bl-20.pdf} 
%		\caption{$\iota=10^{-2}$}
%		\label{image3-1}
%	\end{subfigure}
%	%\vspace{1cm}
%	% row2
%	\begin{subfigure}[b]{0.3\textwidth}
%		\centering
%		\includegraphics[width=\textwidth]{figures/bl-30.pdf}
%		\caption{$\iota=10^{-3}$}
%		\label{image4-1}
%	\end{subfigure}
%	\hfill
%	\begin{subfigure}[b]{0.3\textwidth}
%		\centering
%		\includegraphics[width=\textwidth]{figures/bl-40.pdf}
%		\caption{$\iota=10^{-4}$}
%		\label{image5-1}
%	\end{subfigure}
%	\hfill
%	\begin{subfigure}[b]{0.3\textwidth}
%		\centering
%		\includegraphics[width=\textwidth]{figures/bl-50.pdf}
%		\caption{$\iota=10^{-5}$}
%		\label{image6-1}
%	\end{subfigure}
%	
%	\caption{The Frobenius norm of $\iota^{-2}\boldsymbol{\Phi}_h$ with different $\iota$ and $\lambda=1$.}
%	\label{six_images_1}
%\end{figure}
\begin{figure}[htbp]
	\centering
	\captionsetup[subfigure]{skip=2pt}
	\setlength{\tabcolsep}{2pt}
	
	\begin{tabular}{ccc}
		
		\begin{subfigure}{0.32\textwidth}
			\centering
			\includegraphics[width=\linewidth]{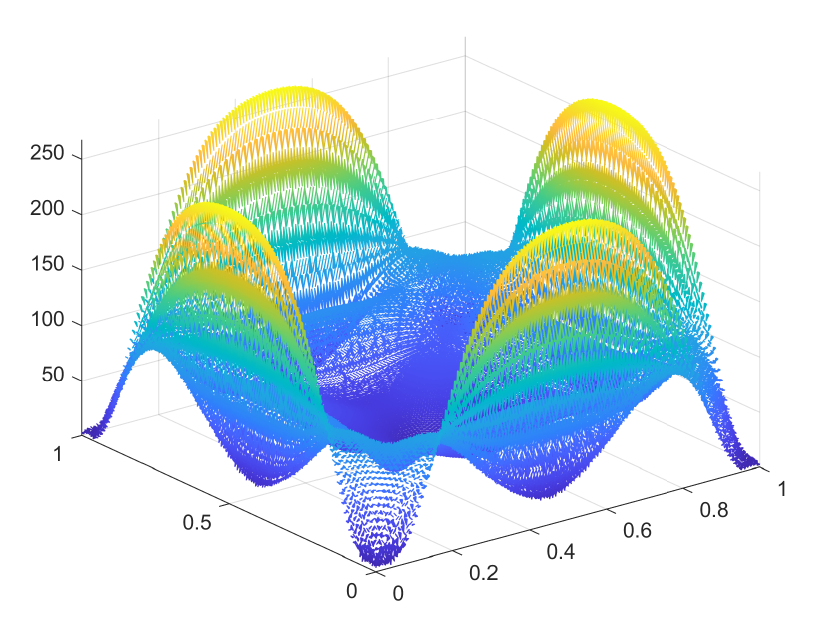}
			\caption{$\iota=10^{-1}$}
		\end{subfigure}
		\hfill 
		\begin{subfigure}{0.32\textwidth}
			\centering
			\includegraphics[width=\linewidth]{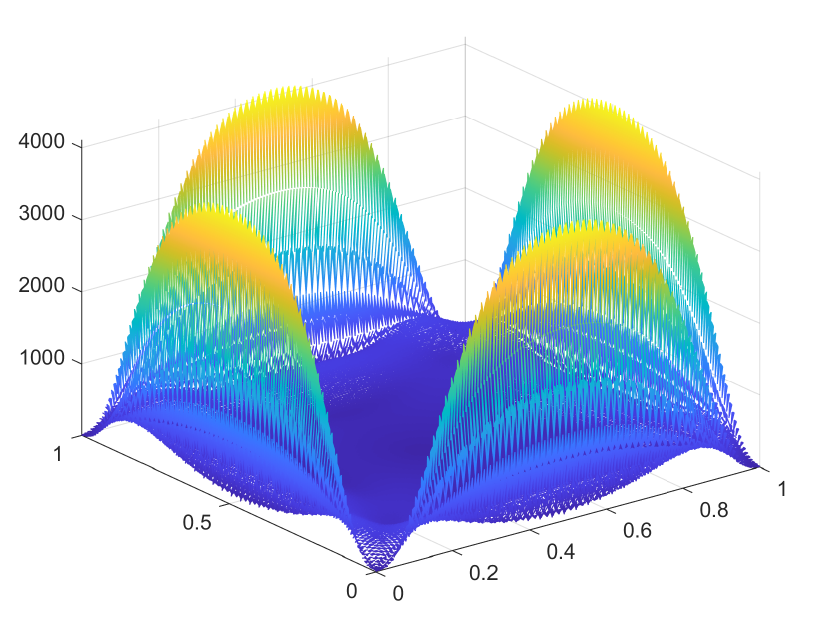}
			\caption{$\iota=10^{-2}$}
		\end{subfigure}
		\hfill 
		\begin{subfigure}{0.32\textwidth}
			\centering
			\includegraphics[width=\linewidth]{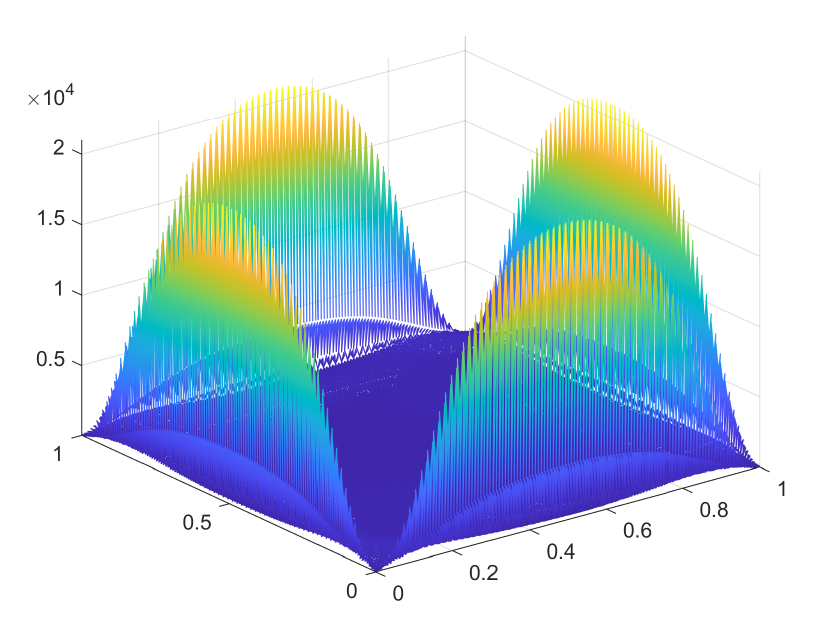}
			\caption{$\iota=10^{-3}$}
		\end{subfigure}
		
	\end{tabular}
\caption{Pointwise Frobenius norm of $\iota^{-2}\boldsymbol{\Phi}_h$, plotted elementwise for Example~\ref{example3}, with $h=1/128$, $\lambda=1$, and several values of $\iota$.}
	\label{fig1}
\end{figure}

%\begin{figure}[htbp]
%	\centering
%	
%	% row1
%	\begin{subfigure}[b]{0.3\textwidth}
%		\centering
%		\includegraphics[width=\textwidth]{figures/bl06.pdf}
%		\caption{$\iota=1$}
%		\label{image1-2}
%	\end{subfigure}
%	\hfill
%	\begin{subfigure}[b]{0.3\textwidth}
%		\centering
%		\includegraphics[width=\textwidth]{figures/bl-16.pdf}
%		\caption{$\iota=10^{-1}$}
%		\label{image2-2}
%	\end{subfigure}
%	\hfill
%	\begin{subfigure}[b]{0.3\textwidth}
%		\centering
%		\includegraphics[width=\textwidth]{figures/bl-26.pdf} 
%		\caption{$\iota=10^{-2}$}
%		\label{image3-2}
%	\end{subfigure}
%	%\vspace{1cm}
%	% row2
%	\begin{subfigure}[b]{0.3\textwidth}
%		\centering
%		\includegraphics[width=\textwidth]{figures/bl-36.pdf}
%		\caption{$\iota=10^{-3}$}
%		\label{image4-2}
%	\end{subfigure}
%	\hfill
%	\begin{subfigure}[b]{0.3\textwidth}
%		\centering
%		\includegraphics[width=\textwidth]{figures/bl-46.pdf}
%		\caption{$\iota=10^{-4}$}
%		\label{image5-2}
%	\end{subfigure}
%	\hfill
%	\begin{subfigure}[b]{0.3\textwidth}
%		\centering
%		\includegraphics[width=\textwidth]{figures/bl-56.pdf}
%		\caption{$\iota=10^{-5}$}
%		\label{image6-2}
%	\end{subfigure}
%	
%	\caption{The Frobenius norm of $\iota^{-2}\boldsymbol{\Phi}_h$ with different $\iota$ and $\lambda=10^6$.}
%	\label{six_images_2}
%\end{figure}
\begin{figure}[htbp]
	\centering
	\captionsetup[subfigure]{skip=2pt}
	\setlength{\tabcolsep}{2pt}
	
	\begin{tabular}{ccc}
		
		\begin{subfigure}{0.32\textwidth}
			\centering
			\includegraphics[width=\linewidth]{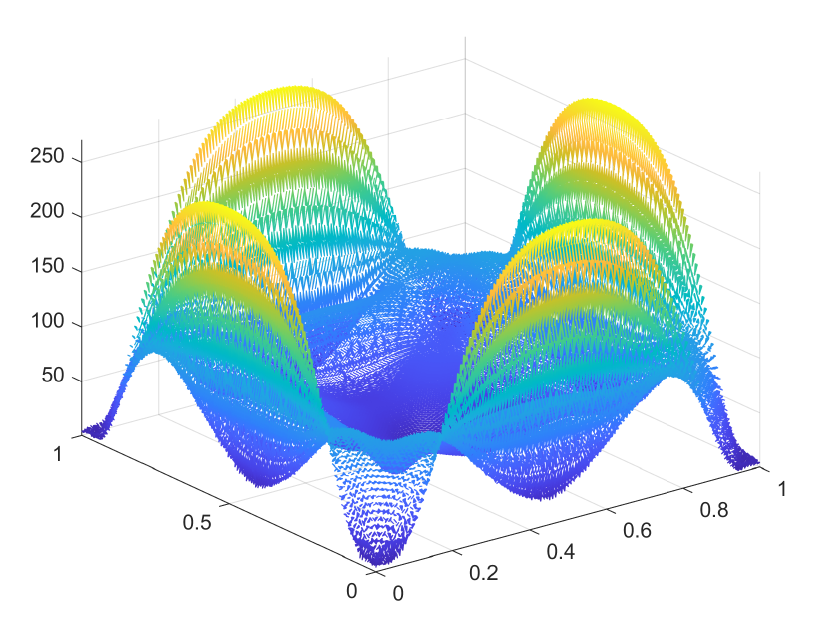}
			\caption{$\iota=10^{-1}$}
		\end{subfigure}
		\hfill 
		\begin{subfigure}{0.32\textwidth}
			\centering
			\includegraphics[width=\linewidth]{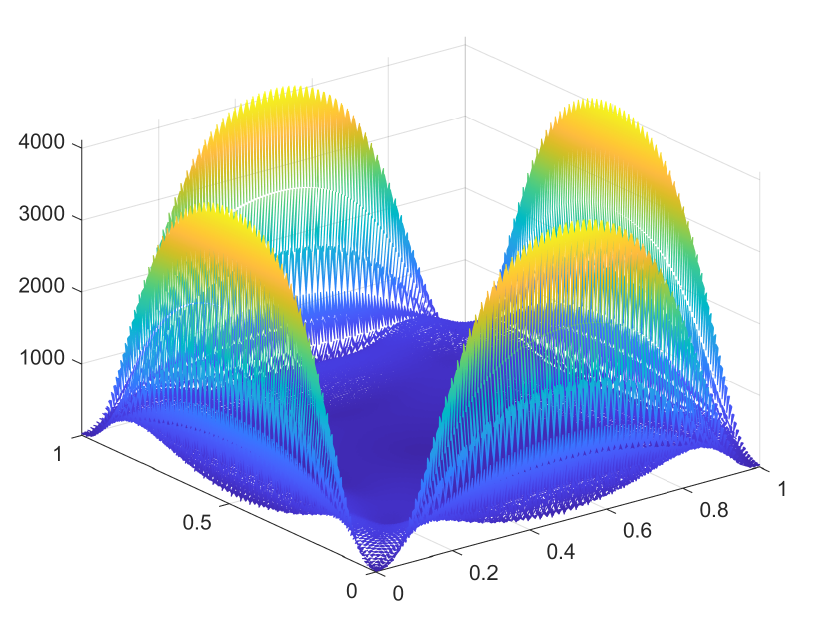}
			\caption{$\iota=10^{-2}$}
		\end{subfigure}
		\hfill 
		\begin{subfigure}{0.32\textwidth}
			\centering
			\includegraphics[width=\linewidth]{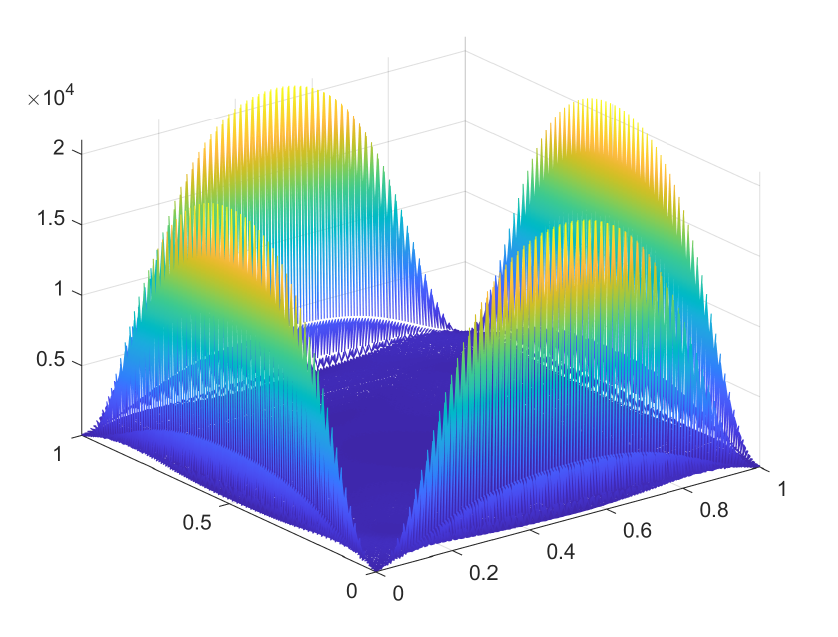}
			\caption{$\iota=10^{-3}$}
		\end{subfigure}
		
	\end{tabular}
\caption{Pointwise Frobenius norm of $\iota^{-2}\boldsymbol{\Phi}_h$, plotted elementwise for Example~\ref{example3}, with $h=1/128$, $\lambda=10^6$, and several values of $\iota$.}
	\label{fig2}
\end{figure}

\end{example}

\begin{example}\label{example4}
\normalfont
In three dimensions, we take the linear elasticity solution
$$
\boldsymbol{u}_0=
\left(
\begin{matrix}
	2x^2(1-x)^2y(1-y)(1-2y)z(1-z)(1-2z)\\
	-y^2(1-y)^2x(1-x)(1-2x)z(1-z)(1-2z)\\
	-z^2(1-z)^2x(1-x)(1-2x)y(1-y)(1-2y)
\end{matrix}
\right).
$$
As shown in Table~\ref{table34-3d}, the error ${\interleave \boldsymbol{u}_0-\boldsymbol{u}_h\interleave}_{1,h}= \mathcal{O}(h)$, which further confirms the robustness and optimality of the method \eqref{SGE-MFEM} with respect to both $\iota$ and $\lambda$.
\begin{table}
	\centering
	\caption{${\interleave \boldsymbol{u}_0-\boldsymbol{u}_h\interleave}_{1,h}$ of the mixed method (\ref{SGE-MFEM}) for Example~\ref{example4} in 3D.}
	\vspace{-1.0em}
	\label{table34-3d}
	\begin{tabular}{ccccccc}
		\toprule
		$\lambda$ & $\iota\backslash h$ & 1/2 & 1/4 & 1/8 & 1/16 & 1/32 \\
		\midrule
		\multirow{4}{*}{$1$} 
		& $10^{-6}$ & 3.765e-03 & 2.634e-03 & 1.427e-03 & 7.208e-04 & 3.603e-04\\
		& rate &  & 0.52 & 0.88 & 0.99 & 1.00\\
		& $10^{-8}$ & 3.765e-03 & 2.634e-03 & 1.427e-03 & 7.208e-04 & 3.603e-04\\
		& rate &  & 0.52 & 0.88 & 0.99 & 1.00\\
		\midrule
		\multirow{4}{*}{$10^{6}$} 
		& $10^{-6}$ & 3.730e-03 & 2.607e-03 & 1.409e-03 & 7.108e-04 & 3.549e-04\\
		& rate &  & 0.52 & 0.89 & 0.99 & 1.00\\
		& $10^{-8}$ & 3.730e-03 & 2.607e-03 & 1.409e-03 & 7.108e-04 & 3.549e-04\\
		& rate &  & 0.52 & 0.89 & 0.99 & 1.00\\
		%& $10^{-8}$ & 4.041e-03 & 2.935e-03 & 1.682e-03 & 8.697e-04 & 4.382e-04\\
		%& rate &  & 0.46 & 0.80 & 0.95 & 0.99\\
		\bottomrule
	\end{tabular}
	\vspace{1.0em}
\end{table}
\end{example}
\bibliographystyle{abbrv} % F6+F8+F6+F8
\bibliography{ref} 
\end{document}